\documentclass[12pt, a4paper]{amsart}

\usepackage[top=37mm,bottom=37mm,left=27mm,right=27mm]{geometry}
\usepackage{amsthm,amssymb,enumerate,xcolor}
\usepackage{graphics}
\usepackage[hidelinks,colorlinks=true,linkcolor=blue,citecolor=teal,linkcolor=green!50!black]{hyperref}
\usepackage{array}
\usepackage{booktabs}

\newcommand{\g}{\mathfrak}
\newcommand{\s}{\mathsf}
\newcommand{\R}{\mathbb{R}}
\newcommand{\C}{\mathbb{C}}
\renewcommand{\H}{\mathbb{H}}
\renewcommand{\O}{\mathbb{O}}
\newcommand{\F}{\mathbb{F}}
\newcommand{\ad}{\operatorname{ad}}
\newcommand{\Ad}{\operatorname{Ad}}
\newcommand{\tr}{\operatorname{tr}}

\newcommand{\Aut}{\operatorname{Aut}}
\newcommand{\End}{\operatorname{End}}

\newcommand{\Exp}{\operatorname{Exp}}
\newcommand{\id}{\operatorname{id}}

\newcommand{\ext}[1]{\ensuremath{\scalebox{0.9}{$\bigwedge$}^{#1}\,}}

\newtheorem{theorem}{Theorem}[section]
\newtheorem{lemma}[theorem]{Lemma}
\newtheorem{proposition}[theorem]{Proposition}

\theoremstyle{definition}

\newtheorem{maintheorem}{Theorem}

\newtheorem{maincorollary}[maintheorem]{Corollary}

\begin{document}
\title{Polar homogeneous foliations on symmetric spaces of rank one}
\author[J.~C.~D\'iaz-Ramos]{Jos\'e Carlos D\'iaz-Ramos}
\address{CITMAGA, Universidade de Santiago de Compostela, Spain}
\email{josecarlos.diaz@usc.es}

\author[J.~M.~Lorenzo-Naveiro]{Juan Manuel Lorenzo-Naveiro}
\address{Department of Mathematics, University of Oklahoma, USA}
\email{jnaveiro@ou.edu}

\begin{abstract}
We classify polar homogeneous foliations on rank one symmetric spaces of noncompact type up to orbit equivalence.
\end{abstract}

\subjclass[2020]{53C35, 53C12, 57S20, 57S25}

\keywords{Polar action, foliation, symmetric space, hyperbolic space}

\thanks{The authors have been supported by 
	grant PID2022-138988NB-I00 funded by MICIU/AEI/10.13039/501100011033 and by ERDF, EU, 
	and by project ED431C 2023/31 (Xunta de Galicia, Spain).}

\maketitle

\section{Introduction}

An isometric action of a Lie group $G$ on a Riemannian manifold $M$ is polar if it admits a section,
that is, a connected submanifold $\Sigma$ that meets every orbit of the action orthogonally.
We also say that the $G$-action is hyperpolar if it admits a flat section.
The archetypal example of a polar action is the standard representation of the orthogonal group
$\s{O}(2)$ on the Euclidean plane $\mathbb{R}^2$.
Here, the sections are precisely the straight lines that go through the origin.
Similarly, the action of $\s{O}(n)$ on the vector space $\mathrm{Sym}_{n}(\mathbb{R})$
of all $n\times n$ symmetric matrices is also polar and
the subspace of all diagonal matrices is a section.
In view of this example, the elements of a section are commonly understood as canonical forms
of the elements of the ambient space.
Polar actions (and in particular cohomogeneity one actions) have been extensively used
in invariant theory~\cite{PalaisTerng87} as well as in the construction of new examples
of manifolds with exceptional holonomy~\cite{Bryant97} or inhomogeneous Einstein
metrics~\cite{Bohm98}, to name a few examples.
For a general treatment on the main properties of polar actions, we refer the reader
to~\cite{GroveZiller,PalaisTerng87}.

In this article, we are interested in classifying polar actions on Riemannian manifolds
up to orbit equivalence.
The natural choice of ambient spaces in which to tackle this problem are symmetric spaces.
This is due to two reasons:
on the one hand, symmetric spaces have large isometry groups with a well understood structure;
on the other hand, a section of a polar action is a totally geodesic submanifold, and those can
be characterized algebraically in symmetric spaces by means of so-called Lie triple systems.

The first systematic classification of polar actions was carried out by Dadok~\cite{Dadok85},
who showed that every polar representation is orbit equivalent to the isotropy representation
of a symmetric space.
This result also yields the classification of polar actions on round spheres.
Later, Wu~\cite{Wu} derived the classification of polar actions on real hyperbolic spaces.

There have been significant advances in the classification of polar actions on (irreducible)
symmetric spaces of compact type.
Indeed, Podestà and Thorbergsson~\cite{PodestaThorbergsson99} described all polar actions
on symmetric spaces of compact type and rank one
(that is, spheres and projective spaces over the normed real division algebras).
From Dadok's result, one can relate these actions to Cartan's classification of symmetric
spaces.
This work was later revised by Gorodski and Kollross~\cite{GorodskiKollross16},
who found a missing example in the case of the Cayley projective plane.
Moreover, Biliotti~\cite{Biliotti06} conjectured that if $M$ is an irreducible symmetric
space of compact type and rank greater than one, then every polar action on $M$ is hyperpolar.
This conjecture was shown to be true by Kollross and Lytchak~\cite{KollrossLytchak13}.
Additionally, Kollross proved that every hyperpolar action on $M$ is
either of cohomogeneity one or orbit equivalent to a Hermann action.
The proof of this fact, as well as the classification of both families of actions,
can be found in~\cite{Kollross02}.

In contrast with the compact case, results concerning polar actions on symmetric spaces
of noncompact type are quite scarce.
The only such spaces in which polar actions have been completely determined
are the real hyperbolic space (as mentioned above) and the complex
hyperbolic space~\cite{DiazDominguezKollross20}.
The special case of cohomogeneity one actions has enjoyed the most attention.
Their classification is the result of a collective effort that started
in~\cite{BerndtBruck01} and finally ended in~\cite{SanmartinSolonenko25}.

We say that an isometric action induces a homogeneous foliation if all of its
orbits have the same type
(that is, if all its isotropy subgroups are conjugate).
Berndt and Tamaru classified all codimension one homogeneous foliations
on irreducible symmetric spaces of noncompact type—see~\cite{BerndtTamaru03}
and the subsequent generalization~\cite{Solonenko21} to the reducible case.
Moreover, Berndt, Tamaru and the first author developed in~\cite{BerndtDiazTamaru10}
a method to construct all hyperpolar homogeneous foliations on irreducible symmetric
spaces of noncompact type.
More recently, we have obtained the classification of
codimension two polar homogeneous foliations on irreducible symmetric spaces of
noncompact type, see~\cite{DiazLorenzo}.
These foliations turn out to be lower dimensional analogues of the codimension one
examples constructed by Berndt and Tamaru.
The main difficulty in dealing with polar homogeneous foliations of larger codimension
is that their sections are not known to satisfy any restriction apart from being
totally geodesic.
In fact, one can see that if $M$ is an irreducible symmetric space of noncompact type,
then every boundary component of $M$ is the section of a polar homogeneous foliation
on $M$~\cite{BerndtDiazTamaru10}.
However, using the theory of maximal solvable subalgebras of real semismple Lie algebras,
one can immediately give constraints on the subgroups of the isometry group of $M$
that act polarly inducing a foliation~\cite[Proposition~4.2]{DiazLorenzo}.

The purpose of this article is to classify all polar homogeneous foliations on
symmetric spaces of noncompact type and rank one.
As we mentioned earlier, the only symmetric spaces of noncompact type on which
polar actions have been completely classified are the real and complex
hyperbolic spaces.
We also refer the reader to~\cite{berndt-diaz-ramos} for a direct proof of the
classification of polar homogeneous foliations on complex hyperbolic spaces.
Because of this, the ambient spaces under our consideration are the
quaternionic hyperbolic spaces $\H\s{H}^n$ (where $n\geq 2$)
and the Cayley hyperbolic plane $\O\s{H}^2$.
In these spaces, the works of Kollross~\cite{kollross-duality,kollross-octonions}
provide partial classifications of polar actions that preserve a proper totally
geodesic submanifold, and it turns out that all of the actions found in his results possess
singular orbits.
In addition, Berndt and Tamaru~\cite{BerndtTamaru07} derived the classification of
cohomogeneity one actions on the Cayley hyperbolic plane.
In the same paper, the authors reduce the classification of cohomogeneity one
actions on quaternionic hyperbolic spaces to that of the so-called protohomogeneous
vector subspaces of quaternionic Euclidean vector spaces.
The latter problem was solved by the first author, Dom\'{i}nguez-V\'{a}zquez and
Rodr\'{i}guez-V\'{a}zquez in~\cite{DiazDominguezRodriguez21}.
At any rate, there are no results concerning polar actions (of cohomogeneity greater than one)
without singular orbits on $\H\s{H}^n$ or $\O\s{H}^{2}$
(or more generally, actions that do not preserve a proper totally geodesic submanifold).

We now state the main theorems that will be proved in this article.
Let $M$ denote a rank one Riemannian symmetric space of noncompact type
that can be represented by a symmetric pair $(G,K)$.
More explicitly, $M=\F\s{H}^n$ is a hyperbolic space over one of the real normed
division algebras $\mathbb{F}\in\{\R,\C,\H,\O\}$, with $n=2$ if $\F=\O$.
Throughout this paper, we work with the symmetric metric on $\F\s{H}^n$ that pinches its
sectional curvature between $-1$ and $-1/4$.
Let $\g{g}=\g{k}\oplus\g{a}\oplus\g{n}$ be an Iwasawa decomposition of the Lie algebra of $G$.
We can write $\g{n}=\g{g}_\alpha\oplus\g{g}_{2\alpha}$ as a direct sum of vector spaces, 
where $\g{g}_{2\alpha}$ is the center of $\g{n}$ and $\g{g}_\alpha\cong\mathbb{F}^{n-1}$ as an $\mathbb{F}$-module.
Given a subspace $\g{b}\subseteq \g{a}$ and an abelian subspace $\g{v}\subseteq \g{g}_\alpha$, we construct the subalgebra $\g{s}_{\g{b},\g{v}}=(\g{a}\ominus\g{b})\oplus(\g{n}\ominus \g{v})$.
We will show that the connected subgroup $\s{S}_{\g{b},\g{v}}$ of $G$ whose Lie algebra is 
$\g{s}_{\g{b},\g{v}}$ acts polarly on $\F\s{H}^n$ inducing a foliation.
Moreover, any section of the action of $\s{S}_{\g{b},\g{v}}$ is a real hyperbolic space whose curvature is equal to the maximum sectional curvature attained by a $2$-plane in $\F\s{H}^n$.
We will also show that the actions of $\s{S}_{\g{b}_1,\g{v}_1}$ and $\s{S}_{\g{b}_2,\g{v}_2}$ on $\mathbb{F}\s{H}^n$
are orbit equivalent if and only if $\dim\g{b}_1=\dim\g{b}_2$ and $\dim\g{v}_1=\dim\g{v}_2$.
Thus, it makes sense to define $\s{S}_{a,b}=\s{S}_{\g{b},\g{v}}$, 
where $a=\dim\g{b}\in\{0,1\}$ and $b=\dim\g{v}\in\{0,\dots,n-1\}$.
This definition is correct up to isometric congruence.
The main result of this paper is:

\begin{maintheorem}\label{th:main-short}
Every nontrivial polar homogeneous foliation on a rank one symmetric space of noncompact type is isometrically congruent to the orbit foliation induced by a group $\s{S}_{a,b}$ with $a\in\{0,1\}$ and $b\in\{0,\dots,n-1\}$.
\end{maintheorem}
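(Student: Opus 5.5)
We first reduce to a solvable group acting inside the Iwasawa group $\s{A}\s{N}$. Let $H$ be a closed connected subgroup of $G$ acting polarly on $M=\F\s{H}^n$ and inducing a homogeneous foliation. By \cite[Proposition~4.2]{DiazLorenzo}, which uses the theory of maximal solvable subalgebras, we may replace $H$ by an orbit equivalent group $\s{S}$ whose Lie algebra $\g{s}$ is contained in $\g{a}\oplus\g{n}$, possibly after conjugation. Such a group acts freely, since all orbits are principal and $\s{A}\s{N}$ acts simply transitively. We then identify $M$ with the solvable Lie group $\s{A}\s{N}$ endowed with its left-invariant metric. The orbit through the identity has tangent space $\g{s}$, and its normal space is $\g{s}^\perp=(\g{a}\oplus\g{n})\ominus\g{s}$.

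Next we analyse the possible $\g{s}$. Because a polar action has a totally geodesic section, $\g{s}^\perp$ must be a Lie triple system, after being transported to $\g{p}$ via the standard isomorphism $\g{a}\oplus\g{n}\cong\g{p}$, $X\mapsto$ projection to $\g{p}$. We also need the integrability (polarity) condition: $\langle [\g{s}^\perp,\g{s}^\perp]$-type terms with $\g{s}\rangle=0$, which in the Lie group model reads $\langle \nabla_\xi \eta, X\rangle$ symmetric, namely $\langle[\xi,\eta],X\rangle$ conditions from the Koszul formula. We split cases according to the projection of $\g{s}$ onto $\g{a}$. If $\g{s}$ is not contained in $\g{n}$ then, after conjugating by an element of $\s{N}$, we may assume $\g{a}\subseteq\g{s}$ and $\g{b}=0$. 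Otherwise $\g{s}\subseteq\g{n}$ and $\g{b}=\g{a}$. Write $\g{s}^\perp\cap\g{n}$ through its components in $\g{g}_\alpha$ and $\g{g}_{2\alpha}$. We would show that the normal space cannot contain vectors with nonzero $\g{g}_{2\alpha}$-component. Such a vector $Z$ would be orthogonal to the orbit, but $[\g{g}_\alpha,\g{g}_\alpha]=\g{g}_{2\alpha}$, together with the requirement that $\g{s}$ be a subalgebra of codimension $\dim \g{s}^\perp$ containing most of $\g{g}_\alpha$, forces a contradiction with the polarity condition, using $\langle[X,JX],Z\rangle\neq0$ for suitable $X\in\g{g}_\alpha\cap\g{s}$. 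Hence $\g{s}^\perp=\g{b}\oplus\g{v}$ with $\g{v}\subseteq\g{g}_\alpha$.

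Then polarity forces $\g{v}$ to be abelian, i.e. $[\g{v},\g{v}]=0$, which means that $\g{v}$ is totally real in the appropriate sense, $\langle J\g{v},\g{v}\rangle=0$ for all complex structures in $\g{g}_{2\alpha}$. Indeed, $\langle \nabla_\xi\eta,X\rangle$ for $\xi,\eta\in\g{v}$ and $X\in\g{g}_{2\alpha}\subseteq\g{s}$ is a multiple of $\langle[\xi,\eta],X\rangle$, and this expression must be symmetric in $\xi,\eta$ while also being skew. Conversely, any $\g{b},\g{v}$ with $\g{v}$ abelian gives a subalgebra $\g{s}_{\g{b},\g{v}}$. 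The normal space is then $\g{b}\oplus\g{v}$, which is a Lie triple system spanning a real hyperbolic space of curvature $-1$, so these are exactly the examples. Finally, congruence depends only on $(\dim\g{b},\dim\g{v})$. This holds because the group $\s{K}_0$, the centralizer of $\g{a}$ in $\s{K}$ (namely $\s{Sp}(n-1)\s{Sp}(1)$, respectively $\s{Spin}(7)$), acts transitively on abelian (totally real) subspaces of $\g{g}_\alpha$ of fixed dimension. For $\O$, an abelian subspace has dimension at most $1$, which is consistent with the bound $b\le n-1$.

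The main obstacle is the step excluding the $\g{g}_{2\alpha}$ components and handling mixed vectors in the normal space, i.e. normal vectors of the form $\xi+Z$ with $\xi\in\g{g}_\alpha$ and $Z\in\g{g}_{2\alpha}$, or with an $\g{a}$-component. We would handle it first by conjugating with $\s{N}$ to normalize the $\g{a}$-part. We would then apply the Lie triple system condition $[[\g{s}^\perp,\g{s}^\perp],\g{s}^\perp]\subseteq\g{s}^\perp$ in $\g{p}$ together with the classification of totally geodesic submanifolds of rank one spaces. Sections of dimension at least $2$ are then $\F'\s{H}^k$ or $\s{S}$-type spaces, and the freeness of the action rules out the non-real ones.
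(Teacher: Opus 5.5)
Your proposal has a genuine gap in its first step. Proposition~4.2 of \cite{DiazLorenzo} (Proposition~\ref{th:Borel:tan} here) does not place the Lie algebra inside $\g{a}\oplus\g{n}$. It only gives $\g{h}\subseteq\g{t}\oplus\g{a}\oplus\g{n}$ with $\g{t}\subseteq\g{k}_0$ abelian, and at that stage the compact part $\g{t}$ cannot be discarded.

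Everything you do afterwards relies on $\g{s}\subseteq\g{a}\oplus\g{n}$: freeness of the action, identifying the orbit's tangent space with $\g{s}$ and its normal space with $(\g{a}\oplus\g{n})\ominus\g{s}$, conjugating so that $B\in\g{s}$, splitting $\g{s}$ into root spaces, and the Koszul-type polarity conditions. With $\g{t}\neq 0$ the picture changes in three ways:
\begin{enumerate}[\rm (i)]
\item The isotropy algebra $\g{h}\cap\g{t}$ can be nonzero.
\item The tangent space of the orbit at $o$ is only the projection $\g{h}_{\g{a}\oplus\g{n}}$, which is not a priori a subalgebra.
\item Brackets of elements $T+V$, $T'+V'\in\g{h}$ pick up extra terms $[T,V']-[T',V]$.
\end{enumerate}
These extra terms are exactly what makes the paper's quaternionic argument delicate. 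Lemma~\ref{lemma:bracket-xialpha-S} produces $S\in\g{t}$ with $[\xi_\alpha,S]\neq 0$, and Proposition~\ref{prop:section-in-a+p1} is closed only by comparing the isotropy algebras $\g{h}\cap\g{t}$ and $\Ad(g)\g{h}\cap\g{t}$. The torus can be removed only at the very end, via Proposition~\ref{prop:orbit-equivalence}, once $\g{h}_{\g{a}\oplus\g{n}}=\g{z}\oplus(\g{n}\ominus\g{v})$ with $\g{v}$ abelian is known. That is essentially what has to be proved.

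Your ``main obstacle'' paragraph does not fill this in. Freeness of the action has nothing to do with excluding sections $\C\s{H}^k$, $\H\s{H}^k$, or non-totally-real subspaces of $\H\s{H}^1$; the paper needs Gray's theorem, \cite{AlekseevskyDiScala} and explicit bracket computations for that. The exclusion of $\g{g}_{2\alpha}$-components (``containing most of $\g{g}_\alpha$'') is not argued either. The missing point is that polarity makes $\g{g}_\alpha\cap\g{s}$ invariant under $J_Z$, so that $X$ and $J_ZX$ both lie in $\g{s}$.

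Your dichotomy can nevertheless be made to work with $\g{t}$ present, and it would be shorter than the paper's route.

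\emph{Case $\g{h}_{\g{a}}\neq 0$.} Take $T+B+X\in\g{h}$. Since $\ad(T+B)$ is invertible on $\g{n}$, some $g\in N$ gives $\Ad(g)(T+B+X)=T+B$. The operator $\ad(T+B)$ acts as $0$ on $\g{t}\oplus\g{a}$, as $\ad(T)+\tfrac{1}{2}\id$ on $\g{g}_\alpha$, and as $\ad(T)+\id$ on $\g{g}_{2\alpha}$, with $\ad(T)$ skew. These blocks have disjoint spectra, so the $\ad(T+B)$-invariant algebra $\Ad(g)\g{h}$ splits along them. The normal space is then $(1-\theta)(\g{v}\oplus\g{w})$ with $\g{v}\subseteq\g{g}_\alpha$ and $\g{w}\subseteq\g{g}_{2\alpha}$. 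The argument then runs as follows:
\begin{enumerate}[\rm (a)]
\item Polarity gives $J_X\g{v}\subseteq\g{v}$ for $X\in\g{w}$.
\item For $V\in\g{g}_\alpha\ominus\g{v}\subseteq\Ad(g)\g{h}$, the bracket $[V,J_XV]=\tfrac12\lvert V\rvert^2X$ kills $\g{w}$; a dimension count excludes $\g{v}=\g{g}_\alpha$.
\item Pairing $[(1-\theta)\g{v},(1-\theta)\g{v}]$ with $\g{g}_{2\alpha}\subseteq\Ad(g)\g{h}$ gives $[\g{v},\g{v}]=0$.
\end{enumerate}

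\emph{Case $\g{h}_{\g{a}}=0$.} Then $B\in\g{h}_\g{p}^\perp$. The $R_B$-eigenspace splitting and the argument of Subsection~\ref{sec:a-in-spperp}, where the $\g{t}$-terms drop out after pairing with $(1-\theta)X$, together with the same dimension count, yield $\g{h}_\g{p}^\perp=\g{a}\oplus(1-\theta)\g{v}$. This section has constant curvature $-\tfrac14$, so it is totally real and $\g{v}$ is abelian.

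In both cases Proposition~\ref{prop:orbit-equivalence} then replaces $H$ by some $\s{S}_{\g{b},\g{v}}$. None of this appears in your proposal, so as written the argument does not go through.
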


Note that if $a=0$ and $b=0$, then $\s{S}_{0,0}$ acts transitively on $\F\s{H}^n$.
If $a+b=1$ then the action of $\s{S}_{a,b}$ is of cohomogeneity one~\cite{BerndtTamaru03}.
If $a=1$ and $b=0$, then $\s{S}_{1,0}$ is nilpotent and its action induces the horosphere foliation.
If $a=0$ and $b=1$, then $\s{S}_{0,1}$ is solvable and its action induces the so-called solvable foliation.
If $a=1$ we get $\g{b}=\g{a}$ and $\g{s}_{1,b}=\g{n}\ominus\g{v}$ above;
in this case all the orbits of the action of $\s{S}_{1,b}$ are congruent and are contained in horospheres of $\F\s{H}^n$.

Taking into account that the trivial action is also polar we obtain

\begin{maintheorem}
There are exactly $2n+1$ congruency classes of homogeneous polar foliations on $\F\s{H}^n$ up to isometric congruence.
\end{maintheorem}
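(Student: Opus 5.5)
This statement is a counting corollary of Theorem~\ref{th:main-short}, together with the orbit equivalence criterion announced before it. The plan is to enumerate the groups $\s{S}_{a,b}$, prove that distinct pairs $(a,b)$ give non-congruent foliations, identify which one is the transitive action, and then add the trivial foliation.

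\emph{Existence and enumeration.} First I establish that each $\s{S}_{\g{b},\g{v}}$ acts polarly, inducing a homogeneous foliation. The construction gives a subalgebra, since $\g{g}_\alpha\ominus\g{v}$ together with $\g{g}_{2\alpha}$ and $\g{a}\ominus\g{b}$ closes under brackets. Moreover $\s{S}_{\g{b},\g{v}}$ is a subgroup of the simply transitive solvable group $\s{AN}$, so all isotropy groups are trivial and the action is free; hence it is a homogeneous foliation. Polarity follows by exhibiting the section $\exp(\g{b}\oplus\g{v})\cdot o$. This is a totally geodesic real hyperbolic space, because $\g{v}$ is abelian, so $\g{b}\oplus\g{v}$ generates a Lie triple system of real type. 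One then checks that the normal spaces along orbits are spanned by the left-invariant fields coming from $\g{b}\oplus\g{v}$, suitably modified, and that they are integrable and totally geodesic.

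The parameters range over $a\in\{0,1\}$ and $b\in\{0,\dots,n-1\}$, since $\dim\g{g}_\alpha=n-1$ over $\F$ and abelian subspaces can be taken as real spans within a totally real subspace. This gives $2n$ pairs. The pair $(0,0)$ gives the transitive action, whose foliation consists of a single leaf; I count it as a genuine foliation class. The trivial action (points as orbits) is polar with section $M$ and is distinct from all of these. The total is therefore $2n+1$.

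\emph{Non-congruence.} Distinct pairs give non-congruent foliations. The codimension of $\s{S}_{a,b}$ is $a+b$ by the section description. Thus only pairs with the same $a+b$ need to be separated, namely $(1,b-1)$ versus $(0,b)$. For $a=1$ all orbits are congruent and lie in horospheres, so the foliation has constant mean curvature along the section, in fact all leaves are isometric. For $a=0$, $b\ge1$, the orbits vary along the $\g{a}$-direction of the section, and one distinguishes them by an invariant such as the mean curvature of leaves through different points. The cleanest invariant to compute first is whether the mean curvature function is constant on $M$.

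\emph{Main obstacle.} The hard part is the completeness input, namely that every nontrivial polar homogeneous foliation is congruent to some $\s{S}_{a,b}$. I take this from Theorem~\ref{th:main-short}. The remaining delicate point is the rigorous non-congruence of $\s{S}_{1,b-1}$ and $\s{S}_{0,b}$, which requires an explicit curvature computation on the solvable model.
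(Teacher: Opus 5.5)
Your skeleton matches the paper's: completeness from Theorem~\ref{th:main-short}, pairwise non-congruence of the $2n$ groups $\s{S}_{a,b}$, and then the trivial action. If you simply cite Theorem~\ref{th:main}~(\ref{th:main-congruence}), the count is immediate.

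\textbf{The gap: non-congruence is never proved.} You treat the separation of $\s{S}_{1,b-1}$ from $\s{S}_{0,b}$ as still to be done, and you do not do it. What you say about it is also inaccurate. For $a=0$ we have $\g{b}=0$, so $\g{a}\subseteq\g{s}_{0,\g{v}}$ is tangent to the leaves. The section through $o$ is $\exp_o((1-\theta)\g{v})$, which contains no $\g{a}$-direction; it is for $a=1$ that the section contains $\g{a}$. Moreover, proving that the mean curvature of the leaves of $\s{S}_{0,b}$ is nonconstant would force a computation on a second leaf away from $o$, which is unnecessary.

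The missing idea is the invariant ``has a minimal leaf''.
\begin{itemize}
\item For $\g{b}=0$ and $\xi\in\g{v}$, Lemma~\ref{lemma:action-on-abelian} gives $k\in K_0$ with $\Ad(k)\g{v}=\g{v}$ and $\Ad(k)\xi=-\xi$. Then $k$ preserves $\s{S}_{0,\g{v}}\cdot o$ and $-\mathcal{S}_\xi=k_*\mathcal{S}_\xi k_*^{-1}$. Hence $\tr\mathcal{S}_\xi=0$, and the leaf through $o$ is minimal.
\item For $\g{b}=\g{a}$, $\s{S}_{\g{a},\g{v}}$ is normal in $AN$, so all its leaves are congruent. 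From $\langle\mathcal{S}_\xi X,X\rangle_{AN}=\frac{1}{2}\langle[\theta X,X],\xi\rangle$, the mean curvature vector at $o$ is $\frac{1}{2}(\dim\g{g}_\alpha-\dim\g{v}+2\dim\g{g}_{2\alpha})B\neq 0$. So no leaf is minimal.
\end{itemize}
This separates $\s{S}_{1,b-1}$ from $\s{S}_{0,b}$ using only data at $o$. Codimension separates all other pairs, as you say.

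\textbf{Two smaller points.}
\begin{itemize}
\item If $\exp(\g{b}\oplus\g{v})\cdot o$ means the group exponential, it is not the section when $\g{b}=0$. In that case $\Exp(\g{v})\cdot o$ is a flat lying in a horosphere, not a totally geodesic submanifold. Polarity is cleanest via Proposition~\ref{th:criterion}, with $(\g{s}_{\g{b},\g{v}})_\g{p}^\perp=\g{b}\oplus(1-\theta)\g{v}$, as in Proposition~\ref{prop:examples}. In any case it is already Theorem~\ref{th:main}~(\ref{th:main-examples}).
\item Your claim that the trivial action differs from every $\s{S}_{a,b}$ needs a reason: $\g{g}_{2\alpha}\subseteq\g{s}_{\g{b},\g{v}}$ and $\g{g}_{2\alpha}\neq 0$. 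This fails for $\F=\R$, where $\g{v}=\g{g}_\alpha$ is abelian and $\s{S}_{1,n-1}$ is trivial. For example, $\R\s{H}^2$ has four classes, not five. The count $2n+1$ is correct for $\F\in\{\C,\H,\O\}$; for $\F=\R$ it becomes $2n$.
\end{itemize}
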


Remarkably, the number of congruency classes of homogeneous foliations on hyperbolic
spaces does not depend on base field $\mathbb{F}$, but only on $\dim_\mathbb{F}M$.
In particular, in the Cayley hyperbolic plane $\O\s{H}^2$ we obtain two homogeneous polar foliations of codimension one,
one of codimension two, the transitive action, and the trivial action, up to isometric congruence.

In this paper we tackle quaternionic hyperbolic spaces and the Cayley hyperbolic plane.
Parts of the proof are common to both cases, but some arguments are specific of the normed division algebra under consideration.

Theorem~\ref{th:main-short} follows directly from the more precise statement given below,
which is already known to be true for real and complex hyperbolic spaces:

\begin{maintheorem}\label{th:main}
Let $\F\s{H}^n=G/K$, $n\geq 2$, be a rank one symmetric space of noncompact type.
Then the following assertions are true:
\begin{enumerate}[{\rm (1)}]
\item\label{th:main-examples} Given a vector subspace $\g{b}\subseteq \g{a}$ and an abelian subspace $\g{v}\subseteq \g{g}_\alpha$, the subgroup $\s{S}_{\g{b},\g{v}}\subseteq {G}$ acts polarly on $\F\s{H}^n$ inducing a foliation.

\item\label{th:main-classification} Any nontrivial polar homogeneous foliation on $\F\s{H}^n$ is isometrically congruent to the orbit foliation of a subgroup $\s{S}_{\g{b},\g{v}}$ as in item~(\ref{th:main-examples}).

\item\label{th:main-congruence} Given subspaces $\g{b}_1$, $\g{b}_2\subseteq \g{a}$ 
and abelian subspaces $\g{v}_1$, $\g{v}_2\subseteq \g{g}_{\alpha}$, 
the actions of $\s{S}_{\g{b}_1,\g{v}_1}$ and $\s{S}_{\g{b}_1,\g{v}_2}$ are orbit equivalent 
if and only if $\dim\g{b}_1=\dim\g{b}_2$ and $\dim \g{v}_1 = \dim \g{v}_2$.
\end{enumerate}
\end{maintheorem}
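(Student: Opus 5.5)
For item~(\ref{th:main-examples}) we work in the solvable model $\F\s{H}^n\cong \s{A}\s{N}$ with its left-invariant metric, and identify $T_o\F\s{H}^n$ with $\g{a}\oplus\g{n}$. Since $\s{S}_{\g{b},\g{v}}$ is a subgroup of $\s{A}\s{N}$, which acts simply transitively, every isotropy group is trivial and all orbits are principal; hence the action induces a foliation. The normal space at $o$ is $\g{b}\oplus\g{v}$. We show it is the tangent space of a totally geodesic real hyperbolic space $\Sigma$ by checking that $\g{b}\oplus\g{v}$, transported to $\g{p}$ via the Cartan involution $\theta$, is a Lie triple system. For $\g{v}$ abelian in $\g{g}_\alpha$, we have $[\g{v},\g{v}]=0\subseteq\g{g}_{2\alpha}$, so $\g{v}$ is totally real in the $\F$-structure (that is, $J\g{v}\perp\g{v}$ for all imaginary units $J$). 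Together with $\g{a}$, this spans a totally real subspace, so $\Sigma$ is a totally geodesic $\R\s{H}^k$ of curvature $-1/4$. Here we use the standard classification of totally geodesic submanifolds of $\F\s{H}^n$.

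Polarity then follows from the standard criterion for subgroups of $\s{A}\s{N}$ (as in \cite{BerndtDiazTamaru10,DiazLorenzo}). One requires that $[\theta\xi,\eta]$ be orthogonal to $\g{s}_{\g{b},\g{v}}$ for $\xi,\eta$ in the normal space, which is equivalent to the normal distribution being integrable with totally geodesic integral manifolds. We verify this by a root-space computation. Brackets of $\theta\g{g}_\alpha$ with $\g{g}_\alpha$ land in $\g{k}_0\oplus\g{a}$. Brackets of $\theta\g{v}$ with $\g{a}$ land in $\g{g}_{-\alpha}$, and after projection to $\g{a}\oplus\g{n}$ they give $\g{v}$ itself. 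Abelianness of $\g{v}$ kills the $\g{g}_{2\alpha}$ components.

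For item~(\ref{th:main-congruence}), sufficiency uses the fact that $\s{K}_0=N_K(\g{a})$ acts on $\g{g}_\alpha$ (via $\s{Sp}(n-1)\s{Sp}(1)$ or $\s{Spin}(7)$ in the Cayley case) transitively on abelian, i.e.\ totally real, subspaces of a given dimension. For $\F=\H$ this follows from $\s{Sp}(n-1)$-transitivity on orthonormal $\H$-independent totally real frames. For $\O\s{H}^2$ we have $\dim_\O\g{g}_\alpha=1$; there the abelian subspaces have dimension at most $1$, and $\s{Spin}(7)$ acts transitively on the unit sphere. Necessity follows from the invariants of the action: the codimension equals $\dim\g{b}+\dim\g{v}$, and the presence of a fixed point at infinity common to all orbits distinguishes $\dim\g{b}=1$. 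Concretely, for $a=1$ all orbits are contained in horospheres, so the orbit space is not the full $\Sigma$-translate structure; alternatively, the minimal orbits differ, since for $a=0$ the orbit through $o$ is minimal.

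Item~(\ref{th:main-classification}) is the main obstacle. Let $H$ act polarly with all orbits principal. By \cite[Proposition~4.2]{DiazLorenzo}, we may assume after conjugation that $H$ is contained in a maximal solvable subgroup. Since a foliation has no singular orbits, the fixed-point and totally geodesic-invariant cases treated by Kollross \cite{kollross-duality,kollross-octonions} are excluded, and we reduce to $\g{h}\subseteq\g{k}_0\oplus\g{a}\oplus\g{n}$. A standard projection argument, as in \cite{BerndtDiazTamaru10}, replaces $\g{h}$ by an orbit-equivalent subalgebra of $\g{a}\oplus\g{n}$ of the form $(\g{a}\ominus\g{b})\oplus(\g{n}\ominus\g{w})$ up to graphs. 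The key step is to prove that $\g{w}\subseteq\g{g}_\alpha$, i.e.\ that the normal space contains no $\g{g}_{2\alpha}$ component, and that $\g{w}$ is abelian. We plan to do this by analysing the section $\Sigma$ through the normal space $\g{b}\oplus\g{w}$. This normal space must be a Lie triple system, so $\Sigma$ is one of the known totally geodesic submanifolds: $\F'\s{H}^k$, or in the Cayley case the exceptional $\H\s{H}^2$-type ones. Combining the polarity condition $\langle[\theta\xi,\eta],\g{h}\rangle=0$ with the requirement that $\g{h}$ be a subalgebra, we aim to rule out every non-totally-real section. The first thing to try is to show that a normal vector $\xi$ with $J\xi$ also normal for some imaginary unit $J$ forces $[\xi,J\xi]\in\g{g}_{2\alpha}$ to be normal. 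Then we check that a $\g{g}_{2\alpha}$-normal component produces, via brackets with $\g{g}_\alpha\cap\g{h}$, tangent vectors that violate the polarity identity; the case $\O\s{H}^2$ will need separate treatment using the $\s{Spin}(7)$ structure.
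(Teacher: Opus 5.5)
Your items~(\ref{th:main-examples}) and~(\ref{th:main-congruence}) essentially follow the paper. For~(\ref{th:main-examples}) you use the criterion of Proposition~\ref{th:criterion} with normal space $\g{b}\oplus(1-\theta)\g{v}$. For~(\ref{th:main-congruence}) you use $K_0$-transitivity on abelian subspaces of $\g{g}_\alpha$ of fixed dimension, and minimality of $\s{S}_{0,\g{v}}\cdot o$.

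Two repairs are needed in~(\ref{th:main-congruence}). First, drop the fixed-point-at-infinity remark: every $\s{S}_{\g{b},\g{v}}\subseteq AN$ fixes the same point at infinity. Second, if you use minimality, you must also show that \emph{no} orbit of $\s{S}_{\g{a},\g{v}}$ is minimal. Its orbits are all congruent because it is normal in $AN$, and the mean curvature at $o$ is a nonzero multiple of $B$. Your horosphere remark also works, once you add that $\s{S}_{0,\g{v}}\cdot o$ contains the geodesic $A\cdot o$, which lies in no horosphere.

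The genuine gap is item~(\ref{th:main-classification}), where your outline does not reach the hard case. The ``standard projection argument'' is not available where you invoke it. $\g{h}_{\g{a}\oplus\g{n}}$ need not be a subalgebra. Proposition~\ref{prop:orbit-equivalence} (same orbits as the group with Lie algebra $\g{h}_{\g{a}\oplus\g{n}}$) requires $\g{h}_{\g{a}\oplus\g{n}}=\g{z}\oplus(\g{n}\ominus\g{v}_\alpha)$ with $\g{v}_\alpha$ abelian, and that is the conclusion, not a starting point.

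``Up to graphs'' hides the real difficulty. Normal vectors at $o$ are typically diagonal, such as $B+(1-\theta)\xi_\alpha$ or $(1-\theta)(\eta_\alpha+\eta_{2\alpha})$. Tangent vectors come as $T+V$ with $T\in\g{t}$, so $\g{g}_\alpha\cap\g{h}$ may be zero, and the $\g{k}_0$-component of $[\g{h}_\g{p}^\perp,\g{h}_\g{p}^\perp]$ pairs with $T$. For the same reason, polarity alone does not make $[\xi,J\xi]$ normal.

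The paper first conjugates by $\Exp(\pm2\xi_\alpha/\lvert\xi_\alpha\rvert^2)\in N$ (Lemmas~\ref{lemma:equivalence-B+galpha} and~\ref{lemma:B+galpha}). This makes $B$ or some $(1-\theta)\xi_\alpha$ normal, and it then splits $\g{h}_\g{p}^\perp$ by that vector's Jacobi operator. Together with brackets of the kind you describe, this settles $\O\s{H}^2$. It does not settle $\H\s{H}^n$ with $n\geq 3$: there, even a totally real normal space may a priori contain a vector with nonzero components in both $\g{p}_\alpha$ and $\g{p}_{2\alpha}$.

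Excluding such vectors uses two ideas missing from your plan.
\begin{enumerate}[(a)]
\item A dimension count ($\dim\g{h}_\g{p}^\perp\leq n$, $\dim\g{t}\leq n$) gives $\g{h}\cap\g{g}_{2\alpha}\neq0$ (Lemma~\ref{lemma:vector-g2alpha}). With one polarity computation this bounds the $\g{p}_{2\alpha}$-projection of $\g{h}_\g{p}^\perp$ by one.
\item The one-dimensional case needs an argument that is not a polarity identity. Polarity and closure are used only to pin down $\eta_\alpha$ and $[\xi_\alpha,S]=\frac12\lvert\xi_\alpha\rvert^2J_{\eta_{2\alpha}}\xi_\alpha$ (Lemmas~\ref{lemma:spperp} and~\ref{lemma:bracket-xialpha-S}). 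The contradiction comes from the conjugacy of all isotropy groups. For $g=\Exp(-2\xi_\alpha/\lvert\xi_\alpha\rvert^2)$ one has $\g{h}\cap\g{t}\subseteq\Ad(g)\g{h}\cap\g{t}$, while $\Ad(g)(S+J_{\eta_{2\alpha}}\xi_\alpha+\frac12\eta_{2\alpha})=S\notin\g{h}\cap\g{t}$. So the isotropy algebra is strictly larger at another point.
\end{enumerate}
Your plan uses the foliation hypothesis only to enter the Borel subalgebra. The criterion of Proposition~\ref{th:criterion} tested at $o$ cannot detect non-principal orbits elsewhere, so your plan has no mechanism for this case.

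You also need the closing step. Once $\g{h}_\g{p}^\perp=\R(B+(1-\theta)\xi_\alpha)\oplus(1-\theta)\g{v}$, conjugate by the same $g$ and use polarity to see that $\xi_\alpha$ and $\g{v}$ are orthogonal to $\Ad(g)\g{h}$. Proposition~\ref{prop:orbit-equivalence} then yields $\s{S}_{0,\R\xi_\alpha\oplus\g{v}}$.
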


A consequence of Theorem~\ref{th:main} is that every nontrivial polar homogeneous
foliation on a hyperbolic space has a section of constant curvature.
This will be shown in Proposition~\ref{prop:examples}.
Combining this with Lemma~\ref{lemma:singular}, one can show that every nontrivial
polar action on a hyperbolic space has a section of constant curvature.
This result is also known to be true if the ambient space is an irreducible
symmetric space of compact type,
see~\cite[Proposition~1B.1]{PodestaThorbergsson99} for the rank one case
and~\cite[Theorem~1.1]{KollrossLytchak13} for the case of higher rank.
By contrast, in~\cite[Proposition~4.2]{BerndtDiazTamaru10} the authors give examples
of polar actions on irreducible symmetric spaces of noncompact type
that possess sections of nonconstant curvature.
These observations lead us to the following characterization of irreducible
symmetric spaces of noncompact type and higher rank:

\begin{maincorollary}\label{cor:rank-one-characterization}
    Let $M$ be an irreducible symmetric space.
    Then the following conditions are equivalent:
    \begin{enumerate}[\rm (i)]
        \item
            $M$ is a symmetric space of noncompact type and rank $r\geq 2$.

        \item
            $M$ admits a polar action whose sections have nonconstant curvature.
    \end{enumerate}
\end{maincorollary}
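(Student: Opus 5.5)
The plan is to prove the two implications separately, using Theorem~\ref{th:main} for the rank one case and known results for the other types of irreducible symmetric spaces.

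For (i)$\Rightarrow$(ii) we would invoke \cite[Proposition~4.2]{BerndtDiazTamaru10}. It gives, on every irreducible symmetric space of noncompact type and rank $r\geq 2$, a polar action (in fact a polar homogeneous foliation) whose section is a boundary component. A boundary component can be chosen to have nonconstant curvature. The simplest choice is a section containing a maximal flat $\R^r$ together with a direction of strictly negative curvature. Alternatively one can take a reducible totally geodesic section such as $\R\times N$ with $N$ of noncompact type. So the only thing to confirm is that the construction there indeed yields nonconstant curvature in every rank $\geq 2$ case, which is exactly how that proposition is phrased.

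For (ii)$\Rightarrow$(i) we argue by contraposition and show that every polar action on an irreducible symmetric space $M$ that is not of noncompact type and rank $\geq 2$ has sections of constant curvature. There are three cases.
\begin{enumerate}[(a)]
\item \emph{$M$ of compact type.} This is \cite[Proposition~1B.1]{PodestaThorbergsson99} in rank one and \cite[Theorem~1.1]{KollrossLytchak13} in higher rank.
\item \emph{$M$ Euclidean.} An irreducible Euclidean symmetric space is one dimensional, so constant curvature is trivial. If one also allows $\R^n$, every section is totally geodesic, hence flat.
\item \emph{$M=\F\s{H}^n$ of noncompact type and rank one.} Suppose a nontrivial polar action of a connected group $H$ has no singular orbits. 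Then all orbits have the same dimension, but we still need all isotropy groups to be conjugate. For a polar action, principal orbits are dense and exceptional orbits occur only via finite isotropy quotients. We would handle this via Lemma~\ref{lemma:singular}, or by passing to the action of the identity component. If the action is a homogeneous foliation, Theorem~\ref{th:main} puts it, up to congruence, into the form $\s{S}_{\g{b},\g{v}}$. Proposition~\ref{prop:examples} then shows its sections are real hyperbolic spaces of curvature $-1$. If the action has a singular orbit, Lemma~\ref{lemma:singular} is where we expect the constant-curvature conclusion to come from. Presumably it reduces the action to one leaving a proper totally geodesic submanifold invariant, or to a polar slice representation at a singular point. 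The section of a polar representation is a linear subspace, and the section through a singular point is then the exponential image of a section of the slice representation. That image lies in a totally geodesic submanifold, and one checks it is of constant curvature (real hyperbolic or a point/geodesic). The trivial action has section $M$ itself, which has constant curvature only for $\F=\R$. We therefore exclude it, as the paper does, or note that "polar action" here means nontrivial. For transitive actions the section is a point.
\end{enumerate}

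The main obstacle is the singular-orbit case in (c). We must know what Lemma~\ref{lemma:singular} precisely delivers. The plan is to use it to show that the section through a singular orbit is a totally geodesic submanifold spanned by an abelian-plus-one-direction subspace, which in rank one forces $\R\s{H}^k$ of curvature $-1$ or $\R\s{H}^k$ of curvature $-1/4$. That yields constant curvature and completes the characterization.
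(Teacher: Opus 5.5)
Your skeleton is the paper's. You get (i)$\Rightarrow$(ii) from the examples in \cite[Proposition~4.2]{BerndtDiazTamaru10}. You get (ii)$\Rightarrow$(i) from \cite{PodestaThorbergsson99,KollrossLytchak13} in compact type. On $\F\s{H}^n$ you split into actions with a singular orbit (Lemma~\ref{lemma:singular}) and actions without one (Theorem~\ref{th:main} with Proposition~\ref{prop:examples}), excluding the trivial action. The problem is the step you yourself call the main obstacle: you give no valid argument there, and the mechanism you guess would not work.

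\textbf{The singular-orbit case.} Lemma~\ref{lemma:singular} does not reduce the action to one preserving a proper totally geodesic submanifold. Nor does it put $T_p\Sigma$ into a normal form like $\g{a}\oplus(1-\theta)\g{v}$ with $\g{v}$ abelian. Such a normal form would force $\Sigma\cong\R\s{H}^k(2)$, by the computation in the proof of Proposition~\ref{prop:examples}. That would rule out $c=1$, which the paper explicitly leaves open as a conjecture, so this is not a step you can expect to carry out. It also contradicts your own ``curvature $-1$ or $-1/4$''. Saying that $\Sigma=\exp_p(T_p\Sigma)$ with $T_p\Sigma$ a section of the slice representation, and that ``one checks'' constant curvature, is circular: that is the whole claim.

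The missing idea is a reflection argument:
\begin{enumerate}[(1)]
\item At a point $p$ of a singular orbit, the slice representation of $H_p$ on $\nu_p(H\cdot p)$ is a nontrivial polar representation with section $T_p\Sigma$.
\item Hence its polar group $N_{H_p}(T_p\Sigma)/Z_{H_p}(T_p\Sigma)$ is nontrivial and generated by reflections.
\item An element of $H_p$ inducing a reflection of $T_p\Sigma$ preserves $\Sigma$ and restricts to an involutive isometry of it. The component through $p$ of its fixed point set is a totally geodesic hypersurface of $\Sigma$.
\item By Subsection~\ref{sec:fhn-tg-submanifolds}, $\C\s{H}^k$, $\H\s{H}^k$ ($k\geq 2$) and $\O\s{H}^2$ admit no totally geodesic hypersurfaces, so $\Sigma\cong\R\s{H}^k(c)$ with $c\in\{1,2\}$.
\end{enumerate}
This is exactly the statement of Lemma~\ref{lemma:singular}, so you should quote it as stated rather than guess at its content.

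\textbf{Smaller corrections.}
\begin{enumerate}[(a)]
\item Exceptional orbits are not handled by Lemma~\ref{lemma:singular}, which concerns singular orbits only. Passing to the identity component does not by itself exclude them either. What turns ``no singular orbits'' into a homogeneous foliation, so that Theorem~\ref{th:main} applies, is \cite[Proposition~2.1]{BerndtDiazTamaru10}: on a Hadamard manifold all orbits are then principal.
\item Proposition~\ref{prop:examples} gives sections $\R\s{H}^k(2)$, of curvature $-1/4$, not $-1$.
\item In (i)$\Rightarrow$(ii), your claim that a boundary component can always be chosen with nonconstant curvature is false. In $\s{SL}(3,\R)/\s{SO}(3)$ the boundary components other than a point and $M$ are real hyperbolic planes. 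Nonconstant curvature comes from a section with a Euclidean factor, such as $\R\s{H}^2\times\R$ there, which is your fallback ``$\R\times N$''; use that version.
\end{enumerate}
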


We now describe the structure of this paper.
In Section~\ref{sec:preliminaries} we present the basic facts and notation that will be needed in the rest of the paper.
In particular, we introduce generalized Heisenberg algebras in \S\ref{sec:heisenberg}.
These are used in Subsection~\ref{subsec:rank one} to deduce some formulae that are essential in our calculations.
We also recall the classification of totally geodesic submanifolds of rank one symmetric spaces (\S\ref{sec:fhn-tg-submanifolds}) and the basic facts about polar actions in Subsection~\ref{sec:polar}.
The proofs of items~(\ref{th:main-examples}) and~(\ref{th:main-congruence}) 
of Theorem~\ref{th:main} are tackled in Section~\ref{sec:examples}. 
In Section~\ref{sec:general-results}, we provide some general results concerning polar
homogeneous foliations on symmetric spaces of noncompact type and rank one.
Finally, the proof of Theorem~\ref{th:main}~(\ref{th:main-classification}) is carried out in 
Section~\ref{sec:quaternionic} for quaternionic hyperbolic spaces 
and in Section~\ref{sec:cayley} for the Cayley hyperbolic plane.

\section{Preliminaries}\label{sec:preliminaries}

The purpose of this section is to introduce the general notions and results concerning symmetric spaces of noncompact type and rank one, as well as polar actions.
The solvable model $M=AN$ allows us to view a rank one symmetric space of noncompact type as an extension of a one-dimensional Lie group by a two-step nilpotent (or abelian) Lie group.
The subgroup $N$ is an example of a generalized Heisenberg group, while $M$ belongs to the class of Damek--Ricci spaces.
By exploiting the algebraic structure of the Lie algebra $\g{n}$ as a generalized Heisenberg algebra, we are able to perform general calculations with relative ease.

\subsection{Generalized Heisenberg algebras}\label{sec:heisenberg}\hfill

A detailed account of the classification and geometry of both generalized Heisenberg groups and Damek--Ricci spaces can be found in~\cite{BerndtTricerriVanHecke}.
We also refer the reader to~\cite{cowling-dooley-koranyi-ricci} for a treatment focused on rank one symmetric spaces.

Let $\g{v}$ and $\g{z}$ be real vector spaces and $\beta\colon \ext{2} \g{v}\to \g{z}$
a skew-symmetric bilinear map,
We construct the vector space $\g{n}=\g{v}\oplus \g{z}$ and endow it with an inner product 
$\langle \cdot , \cdot \rangle_{\g{n}}$ such that $\g{v}$ and $\g{z}$ are orthogonal, 
and a Lie bracket given by $[\g{n},\g{z}]=0$ and $[U,V]=\beta(U\wedge V)$ for all $U$, $V\in \g{v}$.

Each vector $Z\in \g{z}$ gives rise to a skew-symmetric endomorphism $J_Z \in \g{so}(\g{v})$ by letting 
$\langle J_Z U, V \rangle_{\g{n}} = \langle [U,V], Z\rangle_{\g{n}}$ for each $U$, $V\in \g{z}$.
We say that $\g{n}$ is a \textit{generalized Heisenberg algebra} if 
$J_Z^2=-\lvert Z \rvert_{\g{n}} ^2 \id_{\g{v}}$ for every $Z\in \g{v}$.
In other words, we require $J\colon \g{z}\to \End(\g{v})$ to extend to an algebra homomorphism $J\colon \operatorname{Cl}(\g{z})\to \End(\g{v})$ defined on the Clifford algebra of $\g{z}$ (with the aforementioned inner product).
In particular, generalized Heisenberg algebras are closely related to Clifford modules.
The classification of Clifford modules implies the classification of generalized Heisenberg algebras~\cite[Section~3.1.2]{BerndtTricerriVanHecke}.

A consequence of this definition is that the maps $J_Z$ with $\lvert Z \rvert_{\g{n}} = 1$ become complex structures on the real vector space $\g{v}$.
Because a complex structure is also a linear isomorphism, the Lie algebra $\g{n}$ is two-step nilpotent (except in the case $\g{z}=0$, where $\g{n}$ is abelian) and its center is $\g{z}$.
A subspace $\g{w}\subseteq \g{v}$ is said to be \textit{totally real} if it is totally real with respect to all $J_Z$, where $Z\in\g{z}$ and $\lvert Z \rvert_{\g{n}} = 1$.
Equivalently, $\g{w}$ is totally real if and only if it is an abelian subspace of $\g{v}$.

We will use the following formulae involving $U$, $V\in\g{v}$ and $X$, $Y\in \g{z}$:
\begin{align*}
J_X J_Y + J_Y J_X ={}
& -2\langle X, Y \rangle_{\g{n}} \id_{\g{v}}, 
& \langle J_X U, J_X V \rangle_{\g{n}} ={}
& \lvert X \rvert ^2 \langle U, V \rangle_{\g{n}}, \\
\langle J_X U, J_Y U \rangle_{\g{n}} ={}
& \lvert U \rvert_{\g{n}}^2 \langle X, Y \rangle_{\g{n}}, 
& [V,J_X V]={}& \lvert V \rvert_{\g{n}}^2 X.
\end{align*}

Let $V\in \g{v}$ be a nonzero vector and define $\g{J}V=\{J_Z V\colon Z\in \g{z}\}$.
It is clear from the skew-symmetry of the operators $J_Z$ that $\R V$ and $\g{J}V$ are orthogonal subspaces of $\g{v}$.
Furthermore, it is easy to check that an element $W\in \g{v}$ belongs to $\g{v}\ominus(\R V\oplus \g{J}V)$ if and only if $\langle V,W \rangle_{\g{n}} = 0$ and $[V,W]=0$.

In this article, we make use of the following fact:
Suppose $(V,\langle\cdot ,\cdot\rangle)$ is a Euclidean vector space.
Then the exterior square $\ext{2}V$ becomes a Euclidean vector space
with the inner product that is characterized by
$
\langle u\wedge v, w\wedge x \rangle
=
\langle u,w\rangle\langle v,x\rangle-\langle u,x\rangle\langle v,w\rangle
$
for all $u,\,v,\,w,\,x\in V$.

\subsection{Noncompact rank one symmetric spaces}\label{subsec:rank one}\hfill

Here we gather some general facts on symmetric spaces of noncompact type, focusing mainly on those of rank one.
See~\cite{Helgason01} for a thorough treatment of symmetric spaces.
We also refer to~\cite{Eberlein96} for more information on symmetric spaces of noncompact type, as well as to~\cite[Chapter~6]{knapp} for the structure theory of real semisimple Lie groups and algebras.

Let $M$ be an $n$-dimensional symmetric space of noncompact type.
For the purposes of this article, we assume that $M$ is connected and irreducible as a Riemannian manifold.
We can present $M$ as a Riemannian homogeneous space $G/K$, where $G$ is the identity component $I^0(M)$ of the full isometry group $I(M)$ and $K=G_o$ is the isotropy subgroup corresponding to a basepoint $o\in M$.
The Lie group $G$ is simple and noncompact, whereas $K$ is a maximal compact subgroup of $G$.
Moreover, the sectional curvature of $M$ is nonpositive and the Riemannian exponential map $\exp_o\colon T_o M\to M$ is a global diffeomorphism.

Let us consider the isometry algebra $\g{g}$, which is simple.
The Killing form of $\g{g}$ is denoted by $\mathcal{B}$.
One can show that there exists an involutive automorphism $\theta\in \Aut(\g{g})$ with the property that the symmetric bilinear form $\mathcal{B}_{\theta}(X,Y)=-\mathcal{B}(X,\theta Y)$ is positive definite.
We say in this case that $\theta$ is the Cartan involution of $\g{g}$, and it is unique up to inner automorphisms.
The eigenspace decomposition $\g{g}=\g{k}\oplus\g{p}$ associated with $\theta$ (where $\g{k}=\ker(1-\theta)$ and $\g{p}=\ker(1+\theta)$) is known as the Cartan decomposition of $\g{g}$.
Because $\g{k}$ coincides with the isotropy algebra at $o$, we may identify $\g{p}$ with the tangent space $T_o M$.
The restriction of $\mathcal{B}_\theta$ to $\g{p}$ is an $\Ad(K)$-invariant inner product on $\g{p}\cong T_o M$, so that it may be extended to a $G$-invariant metric on $M$.
Since $M$ is irreducible, this $G$-invariant metric is homothetic to the original metric on $M$.
It should be noted that if a Lie group action on $M$ is polar, then it is also polar with respect to any multiple of the metric on $M$, so it will be convenient for us to work with an adequate rescaling of said metric.
We also remark that if $X\in\g{g}$ is any vector, then the adjoint of $\ad(X)$ is $\ad(X)^{*}=-\ad(\theta X)$.

We now choose a maximal abelian subspace $\g{a}$ of $\g{p}$.
Any two choices of $\g{a}$ are conjugate by $K$, so the dimension of $\g{a}$ is a well-defined invariant of $M$, known as the rank of $M$.
Let $\g{a}^{*}$ denote the dual vector space of $\g{a}$, which inherits an inner product
from the one on $\g{a}$.
Then, any linear form $\lambda \in \g{a}^{*}$ determines a vector subspace
\begin{equation*}
    \g{g}_{\lambda}
    =
    \big\{
        X\in \g{g} \colon [H,X]=\lambda(H)X \text{ for all } H\in \g{a}
    \big\}.
\end{equation*}
Note that, since the family $\ad(\g{a})$ consists of derivations,
one sees that $[\g{g}_\lambda,\g{g}_\mu]\subseteq\g{g}_{\lambda+\mu}$
whenever $\mu\in\g{a}^*$.
If both $\lambda$ and $\g{g}_{\lambda}$ are nonzero, we say that $\lambda$ is a (restricted) root of $\g{g}$ and $\g{g}_{\lambda}$ is its corresponding root space.
The set of roots is denoted by $\Sigma$.
Because the family $\ad(\g{a})\subseteq \g{gl}(\g{g})$ is a commuting family of symmetric endomorphisms with respect to $\mathcal{B}_{\theta}$, it follows that $\g{g}$ can be decomposed as the orthogonal direct sum
$
\g{g}=\g{g}_{0}\oplus \bigoplus_{\lambda\in \Sigma}\g{g}_{\lambda}.
$
This vector space decomposition is known as the root space decomposition of $\g{g}$.
Moreover, the set $\Sigma \subseteq \g{a}^{*}$ is a (perhaps nonreduced) abstract root system in the sense of \cite[Section~2.5]{knapp}.
We also set $\g{k}_{\lambda}=(1+\theta)\g{g}_{\lambda}$ and $\g{p}_{\lambda}=(1-\theta)\g{g}_{\lambda}$ for each $\lambda\in \g{a}^{*}$.
It is readily shown that $\g{g}_{0}=\g{k}_{0}\oplus \g{a}$.
In particular, the adjoint action of $\g{k}_{0}$ normalizes every root space.
The connected subgroup of $G$ with Lie algebra $\g{k}_0$ is denoted by $K_0$, and it is a compact subgroup of $K$.

Now, we fix a notion of positivity on $\Sigma$.
We let $\Sigma^{+}$ be the corresponding set of positive roots and $\Lambda \subseteq \Sigma^{+}$ be the subset of simple roots.
One sees that $\Lambda$ is a basis of $\g{a}^{*}$, and thus its cardinality is equal to the rank of $M$.
Furthermore, the vector subspace $\g{n}=\bigoplus_{\lambda \in \Sigma^{+}}\g{g}_{\lambda}$ is a nilpotent subalgebra of $\g{g}$.
The Iwasawa decomposition theorem asserts that $\g{g}=\g{k}\oplus\g{a}\oplus\g{n}$ (vector space direct sum).
In addition, by taking $A$, $N$ and $AN$ to be the connected subgroups of $G$ whose Lie algebras
are $\g{a}$, $\g{n}$ and $\g{a}\oplus\g{n}$ respectively,
one sees that the multiplication maps $K\times A\times N\to G$
and $A\times N\to AN$ are diffeomorphisms.
In particular, the evaluation map $g\in AN \to g\cdot o \in M$ is also a diffeomorphism,
and by pulling back the Riemannian metric on $M$ to $AN$
we conclude that $M$ is isometric to a solvable Lie group endowed with a left-invariant Riemannian metric.

From now on, we suppose that $M$ is of rank one, so that $M=\F\s{H}^{n}$ is a hyperbolic space over a finite-dimensional
real division algebra $\F\in \{ \R,\C,\H,\O \}$.
We may choose ${G}$ and ${K}$ as in Table~\ref{tab:data-fhn}, so that ${G}$ is (up to a finite cover) the identity component of the isometry group of $M$ and ${K}={G}_o$ is the isotropy subgroup at a point $o\in M$.
Because $M$ is a symmetric space of rank one, any maximal abelian subspace $\g{a}\subseteq \g{p}$ is one-dimensional.

\begin{table}[h!]
\centering
\caption{Data associated with each hyperbolic space.}\label{tab:data-fhn}
\begin{tabular}{*{6}{>{$}l<{$}}}
\toprule
M & {G} & K & {K_0} & \g{g}_\alpha & \g{g}_{2\alpha} \\ 
\midrule[1pt]
\R\s{H}^n & \s{SO}^0(1,n) & \s{SO}(n) & \s{SO}(n-1) & \R^{n-1} & 0 \\
\midrule
\C\s{H}^n & \s{SU}(1,n) & \s{S}(\s{U}(1)\times\s{U}(n)) & \s{S}(\s{U}(1)\times\s{U}(n-1)) & \C^{n-1} & \R \\
\midrule
\H\s{H}^n & \s{Sp}(1,n) & \s{Sp}(1)\times \s{Sp}(n) & \s{Sp}(1) \times \s{Sp}(n-1) & \H^{n-1} & \R^3 \\
\midrule
\O\s{H}^2 & \s{F}_4^{-20} & \s{Spin}(9) & \s{Spin}(7) & \O & \R^7 \\
\bottomrule
\end{tabular}%
\end{table}

The set of (restricted) roots is of the form $\Sigma = \{\pm \alpha, \pm 2\alpha\}$ for some $\alpha \in \g{a}^*$ 
(except if $\F = \R$, when $\Sigma = \{\pm\alpha\}$), and thus the root space decomposition of $\g{g}$ is 
\begin{equation*}
\g{g}=\g{g}_{-2\alpha}\oplus\g{g}_{-\alpha}\oplus\g{g}_{0}\oplus\g{g}_{\alpha}\oplus\g{g}_{2\alpha}.
\end{equation*}
The connected subgroup $K_{0}$ of $G$ whose Lie algebra is $\g{k}_0$ is also presented in Table~\ref{tab:data-fhn}.
We choose a notion of positivity by defining $\alpha$ to be positive (in fact, simple), and the corresponding Iwasawa decomposition becomes $\g{g}=\g{k}\oplus\g{a}\oplus\g{n}$ with $\g{n}=\g{g}_\alpha\oplus\g{g}_{2\alpha}$.

We consider on $\g{g}$ the inner product $\langle \cdot, \cdot \rangle$ given by
\[
\langle X,Y \rangle 
=\frac{2}{(n+3)\dim\F-4}\,\mathcal{B}_\theta(X,Y), \quad X, Y\in \g{g}.
\]
This inner product is taken so that the vector $H_\alpha\in \g{a}$, 
characterized by the equation $\alpha(H)=\langle H_\alpha, H \rangle$, 
has norm $\lvert H_\alpha \rvert = \lvert \alpha \rvert = {1}/{2}$.
We normalize the metric on $M$ so that its restriction to $T_oM\cong \g{p}$ is precisely $\langle \cdot, \cdot \rangle$.
We also define 
\[
B=2H_\alpha,
\] 
which is a unit vector in $\g{a}$.

We use the following two facts throughout this paper.
Let $\lambda\in\Sigma$ and $X,\,Y\in\g{g}_\lambda$.
Then, 
\[
[\theta X,Y]\in\g{k}_0
\text{ if $\langle X,Y\rangle=0$, and }
[\theta X,X]=\lvert X\rvert^2 H_\lambda.
\]

Recall that the solvable group $AN$ acts simply transitively on $M=\mathbb{F}\s{H}^n$.
In particular, $T_o\mathbb{F}\s{H}^n$ is isomorphic to $\g{a}\oplus\g{n}$.
We denote by $\langle \cdot , \cdot \rangle_{AN}$ the pull-back metric induced on $AN$
from the diffeomorphism $M\cong AN$.
Its restriction to $\g{a}\oplus \g{n}$ is determined by the conditions
\[
\begin{aligned}
\langle H_1,H_2\rangle_{AN}
&{}=\langle H_1,H_2\rangle,\
H_1, H_2\in\g{a},
&\langle U,V\rangle_{AN}
&{}=\frac{1}{2}\langle U,V\rangle,\
U, V\in\g{n}.
\end{aligned}
\]

Now, consider the nilpotent algebra $\g{n}=\g{g}_\alpha\oplus \g{g}_{2\alpha}$ endowed with the inner product 
$\langle \cdot,\cdot \rangle_{AN} = \tfrac{1}{2}\langle \cdot,\cdot \rangle$.
For each $Z\in\g{g}_{2\alpha}$, define $J_Z\in \End(\g{g}_\alpha)$ as in Subsection~\ref{sec:heisenberg}.
It is not hard to check that
\begin{equation*}
    J_Z=(\ad(Z)\circ\theta)\vert_{\g{g}_\alpha}
    \qquad
    \text{and}
    \qquad
    J_Z ^2 = - \lvert Z \rvert_{{AN}}^2\id_{\g{g}_\alpha},
\end{equation*}
so $\g{n}$ becomes a generalized Heisenberg algebra.
Moreover, $\g{n}$ satisfies the so-called $J^2$ \textit{condition}:
given an element $V\in \g{g}_\alpha$ and orthogonal vectors $X$, $Y\in \g{g}_{2\alpha}$, there exists a $Z\in \g{g}_{2\alpha}$ (depending on $X$, $Y$ and $V$) satisfying $J_X J_Y V = J_Z V$.
By~\cite[Theorem~1.1]{cowling-h-type}, any generalized Heisenberg algebra that satisfies the $J^2$ condition is isometrically isomorphic to the nilpotent part of the Iwasawa decomposition associated with a rank one symmetric space.

Taking into account the difference between the inner product of $\g{a}\oplus\g{n}$ 
and the inner product of $\g{g}$, 
it is convenient to restate some formulae from Subsection~\ref{sec:heisenberg}:
\begin{align*}
	\langle[U,V],X\rangle
	&{}=\langle J_X U,V\rangle,
	&[U,J_X U]
	&{}=\frac{1}{2}\langle U,U\rangle X,\\
	\langle J_X U,J_X V\rangle
	&{}=\frac{1}{2}\langle U,V\rangle\lvert X\rvert^2,
	&\langle J_X U,J_Y U\rangle
	&{}=\frac{1}{2}\lvert U\rvert^2\langle X,Y\rangle,
\end{align*}
where $U,\,V\in\g{g}_\alpha$, $X,\,Y\in\g{g}_{2\alpha}$.

Using the fact that the projection map $(1-\theta)/2\colon \g{g}_{\alpha}\to \g{p}_{\alpha}$ is a vector space isomorphism,
we can also define a map $J_{X}\in \End(\g{p}_{\alpha})$ for each $X\in \g{g}_{2\alpha}$
by imposing that $J_{X}(1-\theta)U=(1-\theta)J_{X}U$ for each $U\in \g{g}_{\alpha}$.

If $V$ is a vector in either $\g{g}_{\alpha}$ or $\g{p}_{\alpha}$, we define the subspaces
$\g{J}V=\{J_Z V:Z\in\g{g}_{2\alpha}\}$ and $\F V = \R V \oplus \g{J} V$.
Observe that if $V \neq 0$ then $\dim \F V = \dim \F$.

Because the cohomogeneity of the isotropy representation ${K}\curvearrowright \g{p}$ is one, 
we deduce that ${K}$ acts transitively on each sphere of $\g{p}$ centered at the origin.
In particular, two vectors $U$, $V\in\g{p}$ are conjugate under ${K}$ 
if and only if $\lvert U \rvert = \lvert V \rvert$, 
and in that case their corresponding Jacobi operators are conjugate under $\Ad({K})\subseteq \s{O}(\g{p})$.
This means that the Jacobi operators $R_U$, $R_V$ associated with two nonzero vectors $U$, $V\in \g{p}$ have the same eigenvalues (counting multiplicities) up to some positive scalar dependent on the lengths of $U$ and $V$.

Let $R$ denote the $(1,3)$ curvature tensor of $\mathbb{F}\s{H}^n$.
Recall that the Jacobi operator for $U\in\g{p}$ is defined to be
the linear map $R_U\colon\g{p}\to\g{p}$, $V\mapsto R_U(V)=R(V,U)U$.
Since $\mathbb{F}\s{H}^n$ is a symmetric space, the curvature tensor $R$
at $T_o\mathbb{F}\s{H}^n\cong \g{p}$ satisfies $R(X,Y)=-\ad([X,Y])$ for all $X,\,Y\in \g{p}$,
which in turn gives $R_U = -\ad(U)^2$.

We will particularly make use of the Jacobi operators coming from vectors of $\g{a}$ and $\g{p}_\alpha$.
These can be calculated easily from the formulae above, but we include them here for further reference.
We consider $R_{B}$ and $R_{U}$ for $U\in \g{p}_\alpha$ with $\lvert U \rvert=1$.
Then,
\begin{equation}\label{eq:jacobi}
\begin{aligned}
R_{B}V
&{}=
\begin{cases}
0, & V\in \g{a}, \\
-\frac{1}{4} V, & V\in \g{p}_\alpha, \\
-V, & V\in \g{p}_{2 \alpha}, \\
\end{cases}
&
R_{U}V 
&{}= 
\begin{cases}
0, & V\in \R U, \\
-\frac{1}{4}V, & V\in \g{a}\oplus(\g{p}_\alpha\ominus\F U)\oplus\g{p}_{2\alpha}, \\
-V, & V\in \g{J}U.
\end{cases}
\end{aligned}
\end{equation}
It is clear from the above equations that a rank one symmetric space $M\neq \R\s{H}^n$
has negative curvature $-1\leq \sec \leq -{1}/{4}$ and is quarter pinched.
In addition, if $\F=\R$ we deduce that the sectional curvature of $\R\s{H}^n$
is constant and equal to $-1/4$.

If $\F=\H$ we also need a more complete view of $\g{a}\oplus\g{n}$ as a Clifford module
of $\g{g}_{2\alpha}\cong \operatorname{Im}\H$.
Let $X\in\g{g}_{2\alpha}$ be arbitrary.
Taking into account that $\g{a}\oplus\g{n}$ is isomorphic to $T_o\H\s{H}^n$
and $\g{a}\oplus\g{g}_{2\alpha}$ is isomorphic to $\H$,
we can extend the map $J_X\in\End(\g{g}_{\alpha})$ to an endomorphism
$J_X\in\End(\g{a}\oplus\g{n})$.
This map satisfies
\begin{align*}
J_X B&{}=X, 
&J_X X&{}=-\lvert X\rvert_{AN}^2B,
\end{align*}
We also wish to determine $J_X Y$ and $J_X J_Y$ for $X$, $Y\in\g{g}_{2\alpha}$.
This is done by considering a positively oriented orthonormal basis of $\g{g}_{2\alpha}$ and identifying it with the basis of imaginary units of $\mathbb{H}$.
Let $(X_1,X_2,X_3)$ be a positively oriented orthonormal basis of $\g{g}_{2\alpha}\cong\operatorname{Im}\H$; then,
\begin{align*}
J_{X_i}X_{i+1}&{}=-X_{i+2},
&J_{X_i}J_{X_{i+1}}&{}=-J_{X_{i+2}},
\end{align*}
where the indices are taken modulo 3.

In a similar manner, $T_o\mathbb{F}\s{H}^n$ is also isomorphic to $\g{p}$, so all of these considerations can be carried out to $\g{p}$ as well.
The inner product on $\g{p}$ is simply the restriction of the inner product of $\g{g}$ to $\g{p}$.
Using the fact that $(1-\theta)/2\colon\g{a}\oplus\g{n}\to\g{p}$ is an isometry, 
we can decompose $\g{p}=\g{a}\oplus\g{p}_\alpha\oplus\g{p}_{2\alpha}$,
where $\g{p}_\lambda=(1-\theta)\g{g}_\lambda$, $\lambda\in\{\alpha,2\alpha\}$.

For each $X\in\g{g}_{2\alpha}$ we can define operators $J_X\colon\g{p}\to\g{p}$. 
Using the linear isometry $(1-\theta)/2\colon\g{a}\oplus\g{n}\to\g{p}$ again, 
we obtain for all $V\in\g{a}\oplus\g{n}$ that
\[
J_X(1-\theta)V=(1-\theta)J_X V.
\]
In particular, $J_X B=\frac{1}{2}(1-\theta)X$ when we regard $J_X$ as an
operator on $\g{p}$.

With these definitions, a subspace $\g{v}$ of $\g{p}\cong\H^n$ is \emph{totally complex} 
if and only if there exists $X\in\g{g}_{2\alpha}$ such that $J_X(\g{v})=\g{v}$
and $J_Y(\g{v})$ is orthogonal to $\g{v}$ whenever $Y\in\g{g}_{2\alpha}$ is orthogonal to $X$.

\subsection{Totally geodesic submanifolds of symmetric spaces of rank one}\label{sec:fhn-tg-submanifolds}\hfill

In this subsection we recall the classification of complete totally geodesic submanifolds (of dimension greater than one) of the hyperbolic spaces $\F\s{H}^n$ under investigation.
Wolf~\cite{wolf} classified totally geodesic submanifolds of the compact rank one symmetric spaces, and we may apply the duality of symmetric spaces to derive our classification in hyperbolic spaces.
See also~\cite[Section~5]{kollross-octonions} for a detailed treatment of the totally 
geodesic submanifolds of $\O\s{H}^2$.
From this point on, we denote by $\R\s{H}^k(c)$ the real hyperbolic space of constant curvature $-c^{-2}$.
In particular, we have $\R\s{H}^k(2)=\R\s{H}^k$.

Before going through the classification, we recall that every (complete) totally geodesic submanifold $\Sigma$ of a symmetric space of noncompact type $M$ is closed and embedded.
In addition, $\Sigma$ is determined by its tangent space at any point $p \in \Sigma$ because $\Sigma = \exp_p (T_p\Sigma)$.
Furthermore, a subspace $\g{v}\subseteq \g{p}\cong T_o M$ is the tangent space of a totally geodesic submanifold $\Sigma$ of $M$ containing $o=eK$ if and only if $[[\g{v},\g{v}],\g{v}]\subseteq \g{v}$.
The subspaces of $\g{p}$ that satisfy the latter condition are known as Lie triple systems.

The case of $M=\R\s{H}^n$ is by far the simplest.
For each $2\leq k \leq n$ there exists a unique complete $k$-dimensional totally geodesic submanifold of $M$, and it is congruent to $\R\s{H}^k$.
This submanifold can be obtained as the orbit through $o=e{K}$ of the subgroup $\s{SO}^0(1,k)\subseteq \s{SO}^0(1,n)$ embedded in the standard manner.

Let us consider the complex case.
For each $2\leq k\leq n$, the orbits through the origin of the standard subgroups $\s{SO}^0(1,k)$ and $\s{SU}(1,k)$
are totally geodesic $\R\s{H}^k(2)$ and $\C\s{H}^k$, respectively.
One sees that $\R\s{H}^k(2)$ is a totally real submanifold of $\C\s{H}^n$, whereas $\C\s{H}^k$ is a complex submanifold.
We also have a totally geodesic $\C\s{H}^1\cong \R \s{H}^2(1)$ appearing as an orbit of $\s{SU}(1,1)$.
Unlike the submanifolds $\R\s{H}^k(2)$, this $\R\s{H}^2(1)$ is a complex submanifold of $\C\s{H}^n$.
These examples give all totally geodesic submanifolds of $\C\s{H}^n$ up to congruence.

Now we list all totally geodesic submanifolds of $\H\s{H}^n$ up to congruence.
On the one hand, the orbits through $o=e{K}$ of the subgroups $\s{SO}^0(1,k)$, $\s{SU}(1,k)$, and $\s{Sp}(1,k)$ 
($2\leq k\leq n$), all embedded in the standard manner, 
give rise to totally geodesic $\R\s{H}^k(2)$, $\C\s{H}^k$, and $\H\s{H}^k$, respectively.
In particular, the submanifold $\R\s{H}^k(2)$ is totally real,
$\C\s{H}^k$ is totally complex, and $\H\s{H}^k$ is quaternionic.
On the other hand, the orbit $\s{Sp}(1,1)\cdot o$ is a totally geodesic $\H\s{H}^1\cong \R \s{H}^4(1)$ of constant curvature equal to $-1$.
Because $\R\s{H}^4(1)$ has constant curvature, any $k$-dimensional real subspace of $T_o\R\s{H}^4(1)$ exponentiates to a totally geodesic $\R\s{H}^k(1)$ inside $\R\s{H}^4(1)$.

Finally, we comment on the Cayley case.
The submanifolds $\R\s{H}^2(2)\subseteq \C\s{H}^2\subseteq \H\s{H}^2$ appear as the orbits of the subgroups $\s{SO}^0(1,2)\subseteq \s{SU}(1,2)\subseteq \s{Sp}(1,2)$ at $o$.
The Lie algebra embedding $\g{sp}(1,2)\hookrightarrow\g{f}_4^{-20}$ is given explicitly in~\cite[Proposition~5.3]{kollross-octonions}.
In addition, we have a totally geodesic $\O\s{H}^1\cong \R\s{H}^8(1)$ arising as the orbit of the action of an $\s{SO}^0(1,8)\subseteq \s{F}_4^{-20}$.
Similarly to the quaternionic case, the fact that $\O\s{H}^1$ has constant curvature means that every subspace of $T_o\O\s{H}^1$ exponentiates to a totally geodesic submanifold isometric to $\R\s{H}^k$.
Altogether these examples yield all totally geodesic submanifolds of $\O\s{H}^2$
up to congruence.

\subsection{Polar actions}\label{sec:polar}\hfill

We now recall the basic notions concerning polar actions on Riemannian manifolds.
Suppose $M$ is a complete Riemannian manifold and $H$ is a Lie group acting isometrically on $M$.
For the purposes of this article, we may assume that $H$ acts effectively on $M$, which means that the only element of $H$ that acts trivially on $M$ is the identity element.
Hence, we can view $H$ as a subgroup of the isometry group $I(M)$, and if $H$ is connected, then it corresponds to a subgroup of the identity component $I^0(M)$.

The action of $H\subseteq I(M)$ on $M$ is called proper if the shear map $H\times M \to M \times M$ defined by $(h,p)\mapsto (h\cdot p, p)$ is a proper map.
This condition is equivalent to $H$ being a closed subgroup of $I(M)$.
If the action of $H$ is proper, then one sees that the isotropy subgroups of the action are compact, the $H$-orbits are closed and embedded in $M$, and the orbit space $M/H$ is Hausdorff when endowed with the quotient topology.

The orbits of a proper isometric action can be classified according to their type.
We say that two orbits $H\cdot p$ and $H\cdot q$ have the same type if the isotropy subgroups $H_p$ and $H_q$ are conjugate.
The orbit type of the orbit $H \cdot p$ is denoted by $[H\cdot p]$.
In addition, given two orbits $H\cdot x,\,H\cdot y$, we write $[H\cdot x]\leq [H\cdot y]$ if $H_y$ is conjugate to a subgroup of $H_x$.
The relation $\leq$ on the set of orbit types of the $H$-action can be shown to be a partial ordering and it possesses a maximum element.
The $H$-orbits whose orbit type is maximum with respect to $\leq$ are called principal orbits,
and their union constitutes a dense open subset of $M$.
A nonprincipal orbit $H\cdot p$ is called singular if its dimension is strictly smaller than that of the principal orbits,
and exceptional if it has the same dimension as the principal orbits.

In this work, we are interested in distinguishing isometric actions in terms of their orbits.
To this effect, we say that two isometric actions $H_1 \curvearrowright M$ and $H_2 \curvearrowright M$ are orbit equivalent if there exists an isometry of $M$ that carries the orbits of the $H_1$-action to the orbits of the $H_2$-action.

Given a proper isometric action of $H\subseteq I(M)$ on $M$, a complete and injectively immersed submanifold $\Sigma \subseteq M$ is called a section of the action if
\begin{enumerate}[\rm (i)]
    \item $\Sigma$ meets all orbits of $H$, and
    \item For every point $p\in \Sigma$, the tangent spaces $T_p(H\cdot p)$ and $T_p \Sigma$ are orthogonal.
\end{enumerate}
The action of $H$ is called polar if it admits a section.
It can be shown that if $\Sigma$ is a section of the $H$-action,
then $\Sigma$ is totally geodesic in $M$ and any other section of the action
is congruent to $\Sigma$ under an element of $H$.
If one (and thus every) section is flat with the induced metric, then we say that the action of $H$ is hyperpolar.

Podestà and Thorbergsson observed in~\cite[Lemma~1A.4]{PodestaThorbergsson99} that if $H$ is a compact Lie group acting polarly on a simply connected symmetric space $M$ with singular orbits, then any section of the action $H\curvearrowright M$ contains a totally geodesic hypersurface.
In particular, if $M$ is a symmetric space of compact type and rank one, then it is shown from this and the fact that any polar action on $M$ has at least one singular orbit~\cite[Lemma~1A.2]{PodestaThorbergsson99} that any section of the action $H\curvearrowright M$ has constant sectional curvature.
The argument by Podestà and Thorbergsson also applies in the noncompact case.
This was noted by Kollross when $M$ is the Cayley hyperbolic plane~\cite[Lemma~6.1]{kollross-octonions}, but the proof carries over to the general case.

\begin{lemma}\label{lemma:singular}
    Let $M$ be a Riemannian manifold and $H$ a Lie group acting nontrivially and polarly on $M$ with section $\Sigma$.
    If the action of $H$ has a singular orbit, then $\Sigma$ contains a totally geodesic hypersurface.
    In particular, if $M$ is a symmetric space of noncompact type and rank one, then $\Sigma$ is isometric to $\R\s{H}^{k}(c)$, where $c\in \{1,2\}$.
\end{lemma}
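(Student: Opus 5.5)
Take a singular orbit $H\cdot p$ with $p\in\Sigma$ and reduce to the slice representation of the isotropy group $H_p$, which is compact because the action is proper. The standard facts about polar actions give: the slice representation of $H_p$ on the normal space $\nu_p(H\cdot p)$ is polar; $T_p\Sigma\subseteq\nu_p(H\cdot p)$ is a section of it; and, near $p$, the $H$-orbits through points $\exp_p(\xi)$ with $\xi$ small match the $H_p$-orbits of $\xi$. Since $H\cdot p$ is singular, $\dim H\cdot p$ is strictly smaller than the principal dimension. Hence the slice representation is nontrivial: some $\xi$ near $0$ has an $H_p$-orbit of positive dimension. By Dadok's theorem (or directly from the theory of polar representations), a nontrivial polar representation of a compact group, restricted to its section $T_p\Sigma$, has a finite generalized Weyl group $W$ generated by reflections, and the singular set in $T_p\Sigma$ is the union of the corresponding reflection hyperplanes. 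Fix such a reflection $s\in W$ with fixed hyperplane $L\subseteq T_p\Sigma$. By definition, $s$ is induced by an element $h\in H_p$ with $h(\Sigma)=\Sigma$ and $dh_p|_{T_p\Sigma}=s$.

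The restriction $h|_\Sigma$ is then an isometry of the totally geodesic submanifold $\Sigma$ fixing $p$, with differential a reflection at $p$. The fixed point set of an isometry of a complete Riemannian manifold is a union of closed totally geodesic submanifolds. The component through $p$ has tangent space $L$, so it equals $\exp_p(L)$ and is a totally geodesic hypersurface of $\Sigma$. Because $\Sigma$ is itself totally geodesic in $M$, this hypersurface is also totally geodesic in $M$. The main point needing care is that the Weyl group of the slice representation really contains a reflection. If it did not, every point of $T_p\Sigma$ would be regular for the slice action, so all orbits near $p$ in $\Sigma$ would have the same dimension as $H\cdot p$. This contradicts $H\cdot p$ being singular, since principal orbits are dense and pass through $\Sigma$ arbitrarily close to $p$.

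For the rank one statement, write $\Sigma$ as a totally geodesic submanifold of $M=\F\s{H}^n$ with $\dim\Sigma=k\geq 2$; the case $k=1$ is trivially $\R\s{H}^1$. From the classification in Subsection~\ref{sec:fhn-tg-submanifolds}, $\Sigma$ is one of $\R\s{H}^k(2)$, $\R\s{H}^k(1)$ (for $k\leq\dim_\R\F$), $\C\s{H}^k$, $\H\s{H}^k$ or $\O\s{H}^2$. We check which of these contain a totally geodesic hypersurface. For $\C\s{H}^m$ with $m\geq 2$, the maximal proper totally geodesic submanifolds listed there are $\C\s{H}^{m-1}$ and $\R\s{H}^m(2)$, of dimensions $2m-2$ and $m$, both less than $2m-1$. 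Likewise for $\H\s{H}^m$ with $m\geq 2$, the largest proper ones are $\H\s{H}^{m-1}$ and $\C\s{H}^m$, of dimensions $4m-4$ and $2m$, both less than $4m-1$. For $\O\s{H}^2$ the largest is $\O\s{H}^1$ of dimension $8<15$. Thus none of the non-constant-curvature spaces admits a totally geodesic hypersurface, and $\Sigma$ has constant curvature. It is therefore $\R\s{H}^k(c)$ with $c\in\{1,2\}$, since these are the only constant curvature totally geodesic submanifolds: $\C\s{H}^1\cong\R\s{H}^2(1)$, $\H\s{H}^1$, and $\O\s{H}^1$ occur among the $\R\s{H}^k(1)$ family.
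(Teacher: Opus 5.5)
Your proposal is correct and follows essentially the same route as the paper: pass to the nontrivial polar slice representation of $H_p$ at a point $p\in\Sigma$ with $H\cdot p$ singular, take an element of $H_p$ normalizing $T_p\Sigma$ that acts on it as a hyperplane reflection, and use the component through $p$ of its fixed point set in $\Sigma$ as the totally geodesic hypersurface, before invoking the classification of totally geodesic submanifolds of $\F\s{H}^n$. The only differences are that you spell out two points the paper states briefly: why the polar (Weyl) group of the slice representation contains a reflection, and the dimension count showing that $\C\s{H}^m$, $\H\s{H}^m$ ($m\geq 2$) and $\O\s{H}^2$ admit no totally geodesic hypersurface.
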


\begin{proof}
    Let $p\in \Sigma$ be a point such that $H\cdot p$ is singular.
    Such a point exists because $\Sigma$ meets all orbits.
    The slice representation $H_p \curvearrowright \nu_p (H\cdot p)$ defined 
    by the equation $h\cdot\xi=h_{*p}\xi$
    is a nontrivial polar representation with section $T_p\Sigma$, see~\cite{BerndtConsoleOlmos16}.

    We consider the normalizer and centralizer of $T_p \Sigma$ in $H_p$, which are defined respectively as follows:
    \begin{equation*}
        N_{H_p}(T_p\Sigma)=\{ g\in H_p \colon g(T_p\Sigma)=T_p\Sigma \}, \quad
        Z_{H_p}(T_p\Sigma)=\{ g\in H_p \colon g\vert_{T_p\Sigma}=\operatorname{id}_{T_p\Sigma} \}.
    \end{equation*}
    Then, $N_{H_p}(T_p\Sigma)$ is a closed subgroup of $H_p$, $Z_{H_p}(T_p\Sigma)$ is an open normal subgroup of $N_{H_p}(T_p\Sigma)$, and the quotient $\Pi(T_p\Sigma)=N_{H_p}(T_p\Sigma)/Z_{H_p}(T_p\Sigma)$ is a finite group, known as the polar group of the slice representation $H_p \curvearrowright \nu_p(H\cdot p)$
    (see for instance~\cite{GroveZiller}).
    Because the orbit $H\cdot p$ is singular, the polar group $\Pi(T_p\Sigma)$ is nontrivial and generated by reflections along hyperplanes of $T_p\Sigma$.

    Let $g\in N_{H_p}(T_p\Sigma)$ be an element such that $g_{*p}\in \s{O}(T_p\Sigma)$ is the reflection along a hyperplane $V\subseteq T_p\Sigma$.
    Then $g\cdot \Sigma = \Sigma$ and the restriction $g\colon \Sigma \to \Sigma$ is an involutive isometry.
    Denote by $\mathcal{H}\subseteq \Sigma$ the connected component of $\operatorname{Fix}(g)$ containing $p$.
    We have that $\mathcal{H}$ is a totally geodesic hypersurface of $\Sigma$ whose tangent space at $p$ is $V$.

    Now, suppose that $M$ is a symmetric space of noncompact type and rank one.
    From the classification of totally geodesic submanifolds of hyperbolic spaces (see Section~\ref{sec:fhn-tg-submanifolds}), we see that the only totally geodesic submanifolds of $M$ admitting a totally geodesic hypersurfaces are those of constant curvature, so the second claim follows. \qedhere
\end{proof}

It is worth mentioning that if $H$ is a Lie group acting nontrivially, nontransitively and polarly on $\C\s{H}^n$, then the section of the $H$-action is totally real and isometric to $\R\s{H}^{k}(2)$, see~\cite[Proposition~2.4]{DiazDominguezKollross20}.
As far as we are aware, in all known examples of polar actions with singular orbits on $\H\s{H}^n$ and $\O\s{H}^2$ the section is a totally geodesic $\R\s{H}^k(2)$~\cite{kollross-duality,kollross-octonions}.
Therefore, it is natural to conjecture whether the value $c=1$ can be omitted from the statement of Lemma~\ref{lemma:singular}.
Moreover, a consequence of Theorem~\ref{th:main} is that every polar action without singular orbits on a hyperbolic space $M\neq\R\s{H}^n$ has a section isometric to $\R\s{H}^k(2)$.
This will be proved in Proposition~\ref{prop:examples}.

We say that the action of $H$ on $M$ induces a foliation if it has no singular orbits.
In the case that $M$ is a symmetric space of noncompact type
(or more generally a Hadamard manifold)
it follows from~\cite[Proposition~2.1]{BerndtDiazTamaru10} that all the orbits of $H$ are principal,
so all isotropy subgroups are conjugate in $H$.

From~\cite[Theorem~4.1]{BerndtDiazTamaru10} we have the following criterion of polarity (see also~\cite{Gorodski04} for the first criterion of polarity):

\begin{proposition}\label{th:criterion}
	Let $M=G/K$ be a Riemannian symmetric space of noncompact type with Cartan decomposition $\g{g}=\g{k}\oplus\g{p}$.
	Let $H\subseteq G$ be a closed subgroup acting on $M$ in such a way that its orbits form a foliation on $M$.
	We define
	\[
	\g{h}_\g{p}^\perp=\big\{\xi\in\g{p}:\langle\xi,X\rangle=0,\text{ for all $X\in\g{h}$}\big\}.
	\]
	Then:
	\begin{enumerate}[\rm (i)]
		\item The group $H$ acts polarly on $M$ if and only if $\g{h}_\g{p}^\perp$ is a Lie triple system in $\g{p}$ and $[\g{h}_\g{p}^\perp,\g{h}_\g{p}^\perp]$ is orthogonal to $\g{h}$.
		\item The group $H$ acts hyperpolarly on $M$ if and only if $\g{h}_\g{p}^\perp$ is an abelian subspace of $\g{p}$.
	\end{enumerate}
	Moreover, if $H_\g{p}^\perp$ denotes the subgroup of $G$ whose Lie algebra is $[\g{h}_\g{p}^\perp,\g{h}_\g{p}^\perp]\oplus\g{h}_{\g{p}}^{\perp}$, then $H_\g{p}^\perp\cdot o$ is a section of the action of $H$ on $M$.
\end{proposition}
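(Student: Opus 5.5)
The plan is to reduce polarity to a condition on the normal spaces along a single candidate section through $o$. This candidate is the totally geodesic submanifold $\Sigma=\exp_o(\g{h}_\g{p}^\perp)$. Three observations set this up.

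First, I identify the normal space of the orbit at $o$. The tangent space $T_o(H\cdot o)$ is the orthogonal projection of $\g{h}$ onto $\g{p}$. Since $\g{k}\perp\g{p}$, a vector $\xi\in\g{p}$ is orthogonal to this projection if and only if $\langle\xi,X\rangle=0$ for all $X\in\g{h}$. Hence $\nu_o(H\cdot o)=\g{h}_\g{p}^\perp=:\g{v}$.

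Second, I record a general fact about sections: if the action is polar, then the section through $o$ is totally geodesic with tangent space $\g{v}$. So it must be $\exp_o(\g{v})$, and $\g{v}$ must be a Lie triple system.

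Third, I note that the orbit condition is automatic. Let $\g{v}$ be a Lie triple system, so $\g{s}=[\g{v},\g{v}]\oplus\g{v}$ is a subalgebra and $\Sigma=H_\g{p}^\perp\cdot o$. Suppose $\Sigma$ meets the orbits it hits orthogonally. Take any $q\in M$ and a point $h\cdot o$ of the closed orbit $H\cdot o$ minimizing the distance to $q$. The minimizing geodesic from $h\cdot o$ to $q$ is normal to $H\cdot o$, so $h^{-1}q\in\exp_o(\nu_o(H\cdot o))=\Sigma$. Thus $\Sigma$ meets every orbit, and only the orthogonality condition (ii) in the definition of section needs checking.

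The core step is computing orthogonality at a point $g\cdot o$ with $g\in H_\g{p}^\perp$. Here $T_{g\cdot o}\Sigma=g_{*}\g{v}$, and $T_{g\cdot o}(H\cdot g\cdot o)=g_*\big(\text{projection of }\Ad(g^{-1})\g{h}\text{ onto }\g{p}\big)$. Orthogonality at $g\cdot o$ is therefore equivalent to $\langle \Ad(g^{-1})X,\xi\rangle=0$ for all $X\in\g{h}$ and $\xi\in\g{v}$. Because $\Ad(g)$ preserves $\langle\cdot,\cdot\rangle$ for $g$ in this group, this becomes $\langle X,\Ad(g)\xi\rangle=0$.

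Now $\Ad(H_\g{p}^\perp)$ preserves $\g{s}$. Moreover, the span of $\{\Ad(g)\xi : g\in H_\g{p}^\perp,\ \xi\in\g{v}\}$ is exactly $\g{s}$. To see this, differentiate $\Ad(\exp tY)\xi$ at $t=0$ for $Y\in\g{v}$: this produces $[\g{v},\g{v}]$, while the span clearly lies in $\g{s}$. Hence orthogonality at every point of $\Sigma$ is equivalent to $\g{h}\perp\g{s}$. Since $\g{h}\perp\g{v}$ by definition, this is exactly the condition $\g{h}\perp[\g{v},\g{v}]$.

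For the converse direction of (i), assume the action is polar. The section through $o$ is $\Sigma$, so $\g{v}$ is a Lie triple system. Orthogonality along $\exp_o(t\eta)$ for $\eta\in\g{v}$, differentiated at $t=0$, gives $\langle X,[\eta,\xi]\rangle=0$, that is, $[\g{v},\g{v}]\perp\g{h}$. Together with the core step, this proves (i), including the final assertion that $H_\g{p}^\perp\cdot o$ is a section.

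For (ii), I use $\langle R(X,Y)Y,X\rangle=-\lvert[X,Y]\rvert^2$ for $X,Y\in\g{p}$. By this formula, $\Sigma$ is flat if and only if $\g{v}$ is abelian. An abelian $\g{v}$ is trivially a Lie triple system with $[\g{v},\g{v}]=0\perp\g{h}$, so hyperpolarity is equivalent to $\g{v}$ being abelian.

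The main obstacle is the identity $\operatorname{span}\Ad(H_\g{p}^\perp)\g{v}=\g{s}$, together with careful bookkeeping of the projection onto $\g{p}$ in the orthogonality computation at $g\cdot o$. I would handle the first by the derivative argument at the identity, using connectedness of $H_\g{p}^\perp$ to propagate.
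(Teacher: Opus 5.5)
The paper does not prove this criterion; it imports it from \cite[Theorem~4.1]{BerndtDiazTamaru10}, so your argument has to stand on its own. Most of your outline holds up: identify $\nu_o(H\cdot o)=\g{h}_\g{p}^\perp=\g{v}$, force the section through $o$ to be $\exp_o(\g{v})=H_\g{p}^\perp\cdot o$, get the intersection property from the closest-point argument, and reduce orthogonality along $\Sigma$ to a condition on $\g{h}$. In the second step, say explicitly that $T_o\Sigma$ is all of $\nu_o(H\cdot o)$ only because $H\cdot o$ has maximal dimension; that is where the foliation hypothesis enters.

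The core computation, however, rests on a false claim. $\Ad(g)$ does \emph{not} preserve $\langle\cdot,\cdot\rangle$ for $g\in H_\g{p}^\perp$. The inner product is a multiple of $\mathcal{B}_\theta$, which is only $\Ad(K)$-invariant. But $H_\g{p}^\perp$ contains the transvections $\exp(\eta)$, $\eta\in\g{v}\subseteq\g{p}$, and $\Ad(\exp\eta)=e^{\ad(\eta)}$ is $\mathcal{B}_\theta$-symmetric and positive definite, not orthogonal. So for a given $g$, the condition $\g{h}\perp\Ad(g)\g{v}$ is not the orthogonality condition at $g\cdot o$.

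The repair is short, but it uses an ingredient you never mention. $\Ad(G)$ preserves the Killing form $\mathcal{B}$, and $\mathcal{B}(Y,\xi)=\mathcal{B}_\theta(Y,\xi)$ for $\xi\in\g{p}$. Hence $\langle\Ad(g^{-1})X,\xi\rangle=-\langle X,\theta\Ad(g)\xi\rangle$. Orthogonality at $g\cdot o$ therefore means $\g{h}\perp\theta\Ad(g)\g{v}$. Your span argument then shows that orthogonality along all of $\Sigma$ is equivalent to $\g{h}\perp\theta\g{s}$.

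The missing observation is that $\g{s}=[\g{v},\g{v}]\oplus\g{v}$ is $\theta$-stable, because $[\g{v},\g{v}]\subseteq\g{k}$ and $\g{v}\subseteq\g{p}$. Thus $\theta\g{s}=\g{s}$, and your conclusion that the condition is exactly $\g{h}\perp[\g{v},\g{v}]$ is correct. This $\theta$-stability is what replaces $\Ad(G)$-invariance of the metric in the noncompact setting. In the compact setting (cf.\ \cite{Gorodski04}) the relevant inner product is $\Ad(G)$-invariant, and your step would be literally true.

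The differentiated converse is unaffected. Since $\ad(\eta)$ is $\mathcal{B}_\theta$-symmetric for $\eta\in\g{p}$, you still obtain $\langle X,[\eta,\xi]\rangle=0$. Your treatment of (ii) via $\langle R(X,Y)Y,X\rangle=-\lvert[X,Y]\rvert^2$ is fine.
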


\section{The examples}\label{sec:examples}

In this section we study the geometry of the orbits of the subgroups $\s{S}_{\g{b},\g{v}}$
appearing in Theorem~\ref{th:main}.
In particular, we prove Theorem~\ref{th:main}~(\ref{th:main-congruence}).

Let $M=\mathbb{F}\s{H}^n=G/K$ be a symmetric space of rank one and noncompact type, 
where $G$ and $K$ are chosen according to Table~\ref{tab:data-fhn}.
We choose a Cartan decomposition $\g{g}=\g{k}\oplus\g{p}$ of $\g{g}$, the Lie algebra of $G$, 
a $1$-dimensional subspace $\g{a}\subseteq\g{p}$, and 
$\g{g}=\g{g}_{-2\alpha}\oplus\g{g}_{-\alpha}\oplus\g{g}_0
\oplus\g{g}_{\alpha}\oplus\g{g}_{2\alpha}$
the restricted root space decomposition with respect to $\g{a}$.
We write $\g{g}_0=\g{k}_0\oplus\g{a}$ as usual.
We have a unique simple root $\alpha$ and the positive roots are $\alpha,\,2\alpha$ (except then $\F=\R$, where $2\alpha$ is not a root).

Let $\g{b}$ a subspace of $\g{a}$, and $\g{v}$ a totally real subspace of $\g{g}_\alpha$.
We define the vector subspace $\g{s}_{\g{b},\g{v}}=(\g{a}\ominus\g{b})\oplus(\g{g}_\alpha\ominus\g{v})\oplus\g{g}_{2\alpha}\subseteq \g{a}\oplus \g{n}$,
which is a Lie subalgebra of $\g{a}\oplus \g{n}$,
and we let $\s{S}_{\g{b},\g{v}}$ the connected subgroup of $G$ whose Lie algebra is $\g{s}_{\g{b},\g{v}}$.
Because the Lie exponential map $\Exp\colon \g{a}\oplus\g{n}\to AN$ is a diffeomorphism, we have that
$\s{S}_{\g{b},\g{v}}=\Exp(\g{s}_{\g{b},\g{v}})$ is a closed subgroup of $AN$ (and thus of $G$).

Our first task is to prove Theorem~\ref{th:main}~(\ref{th:main-examples}).

\begin{proposition}\label{prop:examples}
The group $\s{S}_{\g{b},\g{v}}$ acts polarly on $M$ inducing a foliation.
In addition,
if and $k=\dim \g{b}+\dim\g{v}\geq 2$, then
every section of the action of $\s{S}_{\g{b},\g{v}}$ is isometric to $\R\s{H}^{k}(2)$.
\end{proposition}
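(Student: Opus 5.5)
We will apply the criterion of Proposition~\ref{th:criterion}, after first checking that the action induces a foliation. Since $\s{S}_{\g{b},\g{v}}$ is a closed subgroup of $AN$, and $AN$ acts simply transitively on $M$, every isotropy subgroup of $\s{S}_{\g{b},\g{v}}$ is trivial. Hence all orbits are principal of the same dimension $\dim\g{s}_{\g{b},\g{v}}$, and the action is proper and induces a foliation.

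The next step is to compute $(\g{s}_{\g{b},\g{v}})^\perp_\g{p}$. We write $\xi\in\g{p}$ as $\xi=H+(1-\theta)U+(1-\theta)X$ with $H\in\g{a}$, $U\in\g{g}_\alpha$, $X\in\g{g}_{2\alpha}$. Since root spaces are mutually orthogonal, $\langle (1-\theta)U,V\rangle=\langle U,V\rangle$ for $V\in\g{g}_\alpha$, and the analogous identity holds for $\g{g}_{2\alpha}$. Orthogonality to $\g{s}_{\g{b},\g{v}}$ therefore forces $H\in\g{b}$, $U\in\g{v}$ and $X=0$. This gives
\[
\g{h}^\perp_\g{p}=\g{b}\oplus(1-\theta)\g{v},
\]
which has dimension $k$.

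We then verify the two conditions of Proposition~\ref{th:criterion}(i).
\begin{enumerate}[\rm (a)]
\item \emph{Bracket orthogonal to $\g{s}$.} For $H\in\g{b}$ and $U\in\g{v}$ we get $[H,(1-\theta)U]=\alpha(H)(1+\theta)U\in\g{k}$. For $U,V\in\g{v}$ we expand
\[
[(1-\theta)U,(1-\theta)V]=[U,V]+\theta[U,V]-[\theta U,V]-[U,\theta V].
\]
Here $[U,V]=0$ because $\g{v}$ is abelian, i.e.\ totally real. The remaining terms lie in $\g{k}$: the term $[\theta U,V]-[U,\theta V]$ is fixed by $\theta$ and lies in $\g{g}_0$, hence in $\g{k}_0$. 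Since $\g{k}\perp\g{a}\oplus\g{n}$ fails in general, we instead argue that $\g{k}_0\perp\g{g}_\lambda$ for $\lambda\neq0$ and $\g{k}_0\perp\g{a}$. For the first term, $(1+\theta)U$ has components only in $\g{g}_{\pm\alpha}$, and its $\g{g}_\alpha$-component is $\alpha(H)U$ with $U\in\g{v}\perp\g{g}_\alpha\ominus\g{v}$; it is also orthogonal to $\g{a}$ and $\g{g}_{2\alpha}$. So $[\g{h}^\perp_\g{p},\g{h}^\perp_\g{p}]\perp\g{s}_{\g{b},\g{v}}$.
\item \emph{Lie triple system.} We show $[[\xi,\eta],\zeta]\in\g{h}^\perp_\g{p}$ for $\xi,\eta,\zeta\in\g{h}^\perp_\g{p}$. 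The cleanest route is to identify $\g{h}^\perp_\g{p}$ with the tangent space of a known totally geodesic submanifold. When $\g{b}=0$, the space $(1-\theta)\g{v}$ is a totally real subspace of $\g{p}_\alpha$; since $J_X$ maps it to a subspace orthogonal to it, it is a Lie triple system tangent to a totally real $\R\s{H}^k(2)$. When $\g{b}=\g{a}$, the space $\g{a}\oplus(1-\theta)\g{v}$ is again totally real in $\g{p}$, because $J_X B=\tfrac12(1-\theta)X\in\g{p}_{2\alpha}$ is orthogonal to it and $J_X(1-\theta)\g{v}\subseteq\g{p}_\alpha\ominus(1-\theta)\g{v}$. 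In the paper's conventions, $\g{p}\cong\F^n$, and a totally real subspace is a Lie triple system; this is the standard $\s{SO}^0(1,k)$-orbit $\R\s{H}^k(2)$.
\end{enumerate}

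For a direct check of (b), we would use the same expansions and the fact that $[\theta U,V]-[U,\theta V]$ acts on $\g{v}$ and $\g{a}$ in the way $\ad$ of $\g{so}(1,k)$ does. Concretely, $[(1-\theta)U,(1-\theta)V]$ applied to $(1-\theta)W$ gives a combination of $\langle V,W\rangle(1-\theta)U-\langle U,W\rangle(1-\theta)V$ plus $J$-terms, and the $J$-terms vanish by total reality.

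With both conditions in place, Proposition~\ref{th:criterion} gives polarity, and the section $H^\perp_\g{p}\cdot o$ is the totally geodesic submanifold with tangent space $\g{h}^\perp_\g{p}$. To determine its geometry, we use \eqref{eq:jacobi}. The vector $B$ has Jacobi eigenvalue $-1/4$ on $\g{p}_\alpha$, and a unit $U\in\g{p}_\alpha$ has eigenvalue $-1/4$ on $\g{a}\oplus(\g{p}_\alpha\ominus\F U)$. By total reality, all 2-planes in $\g{h}^\perp_\g{p}$ therefore have curvature $-1/4$. For $k\ge2$, the section is thus $\R\s{H}^k(2)$.

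The main obstacle is verifying the Lie triple system property in the mixed case $\g{b}=\g{a}$ with $\g{v}\ne0$, i.e.\ that $[[B,(1-\theta)U],(1-\theta)V]$ lands back in $\g{a}\oplus(1-\theta)\g{v}$. Total reality of $\g{a}\oplus(1-\theta)\g{v}$ with respect to the extended operators $J_X$ on $\g{p}$ is the key fact here, and it is what lets us appeal to the classification of totally geodesic submanifolds in \S\ref{sec:fhn-tg-submanifolds}.
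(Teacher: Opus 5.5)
Your proposal is correct and follows essentially the paper's route: you compute $(\g{s}_{\g{b},\g{v}})^{\perp}_{\g{p}}=\g{b}\oplus(1-\theta)\g{v}$, place the brackets in $(1+\theta)\g{v}\oplus\g{k}_0\perp\g{s}_{\g{b},\g{v}}$, take the Lie triple system property from total reality, and read off curvature $-1/4$ from \eqref{eq:jacobi}. The paper uses only $R_B$ together with the fact that $\Sigma$ is a rank one symmetric space, and handles $\g{b}=0$ by viewing $(1-\theta)\g{v}$ inside $\g{a}\oplus(1-\theta)\g{v}$. Your ``all 2-planes'' claim is also fine, but it tacitly uses that every 2-plane meets $(1-\theta)\g{v}$.

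Minor slips:
\begin{itemize}
\item The $\theta$-fixed term is $[\theta U,V]+[U,\theta V]$, not the difference.
\item For $\F=\O$ the paper does not set up the extended $J_X$ on $\g{p}$. There, however, $\dim\g{v}\le 1$, so $\g{a}\oplus(1-\theta)\g{v}$ is checked directly to be a Lie triple system.
\item The mixed bracket you flag only needs $[U,V]=0$; the substantive one is $[[(1-\theta)U,(1-\theta)V],(1-\theta)W]$.
\end{itemize}
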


\begin{proof}
Since $\s{S}_{\g{b},\g{v}}$ is a subgroup of $AN$, 
and the action of $AN$ on $M$ is simply transitive,
it is clear that $\s{S}_{\g{b},\g{v}}$ induces a foliation on $M$.
We just have to use the criterion of polarity given in Proposition~\ref{th:criterion} to check that $\s{S}_{\g{b},\g{v}}$ acts polarly on $M$.

We have $(\g{s}_{\g{b},\g{v}})_\g{p}^\perp=\g{b}\oplus(1-\theta)\g{v}$.
The fact that $\g{v}$ is totally real in $\g{g}_\alpha$ implies that 
$(\g{s}_{\g{b},\g{v}})_\g{p}^\perp$ is a Lie triple system in $\g{p}$ and $[\g{v},\g{v}]=0$.
This leads to
$[(\g{s}_{\g{b},\g{v}})_\g{p}^\perp,(\g{s}_{\g{b},\g{v}})_\g{p}^\perp]
\subseteq(1+\theta)(\g{v}\oplus[\theta\g{v},\g{v}])\subseteq(1+\theta)\g{v}\oplus\g{k}_0$, which is orthogonal to $\g{s}_{\g{b},\g{v}}$, as we wanted to check.

We now prove the second assertion.
In fact, because $(1-\theta)\g{v}$ is contained in $\g{a}\oplus (1-\theta)\g{v}$, it suffices to show that $\g{a}\oplus (1-\theta)\g{v}$ is the Lie triple system corresponding to a real hyperbolic space $\R\s{H}^{m}(2)$, where $m=1+\dim \g{v}\geq 2$.
Let $\Sigma$ be the unique totally geodesic submanifold of $M$ with $o\in \Sigma$ and $T_{o}\Sigma=\g{a}\oplus(1-\theta)\g{v}$.
Then $\Sigma$ is automatically a simply connected symmetric space of rank one,
so we can determine it by choosing any unit vector in $T\Sigma$ and computing its Jacobi operator.
For instance, if we take $B\in T_{o}\Sigma$ it is straightforward to check that
\begin{equation*}
R_{B}\xi
=
-\ad(B)^2\xi
=
\begin{cases}
    0, & \xi\in \g{a}, \\
    -\frac{1}{4}\xi, & \xi\in(1-\theta)\g{v}.
\end{cases}
\end{equation*}
By looking at~\eqref{eq:jacobi}, it is clear that $R_B$ is the Jacobi operator of a real hyperbolic space whose curvature is equal to $-\frac{1}{4}$, so necessarily $\Sigma=\R\s{H}^{m}(2)$, as desired.
\end{proof}

Next we deal with the congruence problem of the actions under investigation. 
We first need

\begin{lemma}\label{lemma:action-on-abelian}
Let $\g{v}$ be an abelian subspace of $\g{g}_\alpha$ and $\xi\in\g{v}$.
Then, there exists $k\in K_0$ such that $\Ad(k)\g{v}=\g{v}$ and $\Ad(k)(\xi)=-\xi$.
\end{lemma}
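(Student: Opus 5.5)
Assume $\xi\neq 0$; otherwise $k=e$ works. The plan is to realize $k$ as the time-$\pi$ flow of a suitable element of $\g{k}_0$. The natural candidate is $T=[\theta\xi,\xi]$ corrected by a multiple of $B$, or the $\g{k}_0$-component of a bracket built from $\xi$ and $\g{g}_{2\alpha}$. A cleaner approach is to use $\g{k}_0$ elements of the form $[\theta J_Z\xi,\xi]$ with $Z\in\g{g}_{2\alpha}$ a unit vector. Since $\langle J_Z\xi,\xi\rangle=0$, the earlier fact gives $T_Z:=[\theta J_Z\xi,\xi]\in\g{k}_0$. The goal is to show that $\ad(T_Z)$ restricted to $\g{g}_\alpha$ is, up to a positive scalar, a complex structure $J_Z$ on $\F\xi=\R\xi\oplus\g{J}\xi$ that vanishes, or at least acts skew-symmetrically while preserving $\g{v}$, on the rest of $\g{v}$.

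The key steps are as follows. First, compute $\ad(T_Z)$ on $\g{g}_\alpha$ via the Jacobi identity:
\[
[T_Z,U]=[\theta J_Z\xi,[\xi,U]]-[\xi,[\theta J_Z\xi,U]].
\]
Then decompose $U\in\g{g}_\alpha$ according to $\g{g}_\alpha=\R\xi\oplus\g{J}\xi\oplus(\g{g}_\alpha\ominus\F\xi)$. For $U=\xi$ one expects $[T_Z,\xi]=c\,J_Z\xi$ and $[T_Z,J_Z\xi]=-c\,\xi$ for some $c>0$. This should follow from $[\xi,J_Z\xi]=\tfrac12|\xi|^2Z$, the formula $[\theta X,X]=|X|^2H_\alpha$, and $J_Z=\ad(Z)\circ\theta$ on $\g{g}_\alpha$. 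For $W\in\g{v}\ominus\R\xi$ we have $[\xi,W]=0$ by abelianness, so $[T_Z,W]=-[\xi,[\theta J_Z\xi,W]]$, where $[\theta J_Z\xi,W]\in\g{k}_0\oplus\g{a}$ is the bracket of $\g{g}_{-\alpha}$ with $\g{g}_\alpha$. I expect this to be proportional to $J_Z$ applied to $W$, lying in $\g{J}W$ and hence orthogonal to $\g{v}$, or possibly zero. This is where the argument risks failing: $\exp(t\,\ad T_Z)$ then might not preserve $\g{v}$.

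To avoid that obstacle, I would instead look for an element acting as $-\id$ on $\xi$ and trivially, or by $\pm\id$, on $\g{v}\ominus\R\xi$, using the structure of $K_0$ from Table~\ref{tab:data-fhn}. The subgroup $K_0$ (respectively $\s{U}(1)\times\s{U}(n-1)$, $\s{Sp}(1)\times\s{Sp}(n-1)$, $\s{Spin}(7)$, $\s{SO}(n-1)$) acts on $\g{g}_\alpha\cong\F^{n-1}$ in the standard way. For $\F=\R,\C,\H$, choose an $\F$-basis of $\g{g}_\alpha$ that is orthonormal with respect to the Hermitian form and contains $\xi/|\xi|$ together with an orthonormal basis of $\g{v}\ominus\R\xi$. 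This is possible because $\g{v}$ is totally real, so an orthonormal real basis of $\g{v}$ is $\F$-orthonormal. Let $k$ act as multiplication by the unit $i\in\F$ on the coordinates of $\xi$ and of the other basis vectors, entrywise on the right in the quaternionic case; concretely this is a diagonal element of $\s{Sp}(n-1)$ or $\s{U}(n-1)$. Two such applications give $-1$ on every coordinate, so $\Ad(k)$ acts as $-\id$ on all of $\g{g}_\alpha$. Hence $\Ad(k)\g{v}=\g{v}$ and $\Ad(k)\xi=-\xi$ trivially. The real cases still need checking: $-\id\in\s{SO}(n-1)$ fails when $n-1$ is odd. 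In those cases I would use the diagonal element $-1$ on $\xi$ and $-1$ on one vector orthogonal to $\g{v}$ if $\g{v}\ne\g{g}_\alpha$; otherwise I would use the $K_0$-component $\s{S}(\s{U}(1)\times\ldots)$ together with the determinant correction by the $\s{U}(1)$ factor. For $\R\s{H}^n$, $K_0=\s{SO}(n-1)$, but the lemma is only used for $\F=\H,\O$, and $-\id$ lies in $\s{Sp}(n-1)$.

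For $\F=\O$, $K_0=\s{Spin}(7)$ acts on $\g{g}_\alpha\cong\O\cong\R^8$ by the spin representation, and $-1\in\s{Spin}(7)$ acts as $-\id$ on $\R^8$. Since $\g{g}_{2\alpha}$ carries the vector representation of $\s{Spin}(7)$, the central element $-1$ is a genuine nontrivial element of $K_0$ acting by $-\id$ on $\g{g}_\alpha$, which finishes that case. So the main obstacle reduces to identifying, case by case, an element of $K_0$ acting as $-\id_{\g{g}_\alpha}$, or failing that, one acting as $-1$ on $\xi$ and preserving $\g{v}$, together with justifying the identification of the $K_0$-action on $\g{g}_\alpha$ with the standard representation.
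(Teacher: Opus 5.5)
Your final argument is correct, but it takes a different route from the paper. The paper extends an orthonormal basis $\{\xi,e_2,\dots,e_k\}$ of $\g{v}$ to an $\F$-orthonormal basis of $\g{g}_\alpha$; this is where the abelian (totally real) hypothesis enters. It then uses transitivity of $K_0$ on ordered $\F$-orthonormal frames to get $k$ with $\Ad(k)\xi=-\xi$ and $\Ad(k)e_i=e_i$. For $\F=\O$ it uses instead that abelian subspaces of $\g{g}_\alpha$ are at most one-dimensional, together with transitivity of $\s{Spin}(7)$ on the unit sphere of $\g{g}_\alpha$.

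You replace all of this by one element of $K_0$ acting as $-\id$ on all of $\g{g}_\alpha$:
\begin{itemize}
\item for $\F=\H$, the element $(1,-I)\in\s{Sp}(1)\times\s{Sp}(n-1)$;
\item for $\F=\O$, the central element $-1\in\s{Spin}(7)$;
\item for $\F=\C$, an element that exists because $K_0=\s{S}(\s{U}(1)\times\s{U}(n-1))$ acts on $\g{g}_\alpha$ through all of $\s{U}(n-1)$.
\end{itemize}
Since this $k$ is independent of $\g{v}$ and $\xi$, for $\F\neq\R$ you prove a stronger statement. The abelian hypothesis becomes superfluous, and a single $k$ serves every $\xi$ in the subsequent minimality argument. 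The paper's frame argument, by contrast, is adapted to the pair $(\g{v},\xi)$ and fixes $\g{v}\ominus\R\xi$ pointwise. So it does not depend on $-\id_{\g{g}_\alpha}$ lying in $\Ad(K_0)$, which is a case-by-case fact and fails for $\s{SO}(n-1)$ with $n-1$ odd.

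Several points in your write-up need tidying.
\begin{itemize}
\item An element acting by the unit $i$ maps $\g{v}$ to $i\g{v}$, which is orthogonal to $\g{v}$. So you must pass to its square, or simply take $-I$ directly; the adapted basis then plays no role.
\item For $\F=\O$, the reason $-1$ acts as $-\id$ on $\g{g}_\alpha$ is that $\g{g}_\alpha$ is the faithful spin module of $K_0\cong\s{Spin}(7)$. The fact that $\g{g}_{2\alpha}$ carries the vector representation only shows that $-1$ acts trivially there.
\item The exploratory paragraph on $T_Z$ can be dropped.
\item For $\F=\R$ with $n-1$ odd and $\g{v}=\g{g}_\alpha$, negate $\xi$ together with a second vector of $\g{v}$. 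Your reference to $\s{S}(\s{U}(1)\times\cdots)$ at that point belongs to the complex case.
\end{itemize}
The same orientation adjustment is needed in the paper's frame argument, since $\s{SO}(n-1)$ is not transitive on ordered orthonormal frames. Moreover, for $\R\s{H}^2$ with $\g{v}=\g{g}_\alpha$ and $\xi\neq 0$, no such $k$ exists in the trivial group $K_0$ at all. None of this affects the quaternionic and Cayley cases, which are the ones the paper is concerned with.
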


\begin{proof}
Recall that $\g{g}_\alpha\cong\F^{n-1}$ as an $\F$-module, 
and that $K_0$ acts on $\g{g}_\alpha$ as its standard representation if $\F\in\{\R,\C,\H\}$,
or as the unique spin representation $\s{Spin}(7)\to\s{SO}(8)$ if $\F=\O$ (see Table~\ref{tab:data-fhn}).

If $\F=\O$, then $\g{v}$ is $1$-dimensional and 
$\s{Spin}(7)$ acts transitively on the unit sphere of $\g{g}_\alpha$~\cite{Borel50},
so we may assume $\F\in\{\R,\C,\H\}$. 
By linearity we may also suppose that $\lvert\xi\rvert=1$.
Let $\{\xi,e_2,\dots,e_k\}$ be an orthonormal basis of $\g{v}$.
Since $\g{v}$ is abelian, it is also totally real in $\g{g}_\alpha$.
Thus, the elements of this basis are not only orthonormal, but $\F$-orthonormal.
We complete this basis to an $\F$-orthonormal basis 
$\{\xi,e_2,\dots,e_k,e_{k+1},\dots,e_{n-1}\}$ of $\g{g}_\alpha$.
In particular, $\{-\xi,e_2,\dots,e_{n-1}\}$ is another $\F$-orthonormal basis of $\g{g}_\alpha$.
Since $K_0$ acts transitively on the set of all ordered $\F$-orthonormal bases of $\g{g}_\alpha$,
we deduce that there exists $k\in K_0$ such that $\Ad(k)\xi=-\xi$ and $\Ad(k)e_i=e_i$ for all $i$.
Because of this, $\Ad(k)\g{v}=\g{v}$ as well, so the result is proved. \qedhere
\end{proof}

The proof of Theorem~\ref{th:main}~(\ref{th:main-congruence}) follows readily from the next result.

\begin{proposition}
Let $\g{b}_1$ and $\g{b}_2$ be subspaces of $\g{a}$, 
and $\g{v}_1$ and $\g{v}_2$ totally real subspaces of $\g{g}_\alpha$.
Then, the actions of $\s{S}_{\g{b}_1,\g{v}_1}$ and $\s{S}_{\g{b}_2,\g{v}_2}$ on $\F\s{H}^n$ 
are orbit equivalent 
if and only if $\dim\g{b}_1=\dim\g{b}_2$ and $\dim\g{v}_1=\dim\g{v}_2$.
\end{proposition}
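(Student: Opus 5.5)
For the ``if'' direction I will conjugate by $K_0$. Since $\g{a}$ is one-dimensional, $\dim\g{b}_1=\dim\g{b}_2$ forces $\g{b}_1=\g{b}_2$, because both equal $0$ or $\g{a}$. For the $\g{v}_i$, I would reuse the argument of Lemma~\ref{lemma:action-on-abelian}. Take $\F$-orthonormal bases of $\g{v}_1$ and $\g{v}_2$ (possible since abelian subspaces are totally real) and complete each to an $\F$-orthonormal basis of $\g{g}_\alpha$. Transitivity of $K_0$ on ordered $\F$-orthonormal bases for $\F\in\{\R,\C,\H\}$ then gives $k\in K_0$ with $\Ad(k)\g{v}_1=\g{v}_2$. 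For $\F=\O$ both spaces have dimension at most $1$, and $\s{Spin}(7)$ is transitive on the unit sphere of $\g{g}_\alpha$.

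Because $\Ad(k)$ commutes with $\ad(\g{a})$, it preserves $\g{a}$ (pointwise), $\g{g}_\alpha$ and $\g{g}_{2\alpha}$. It is also an isometry of $\g{g}$, so it preserves orthogonal complements. Hence $\Ad(k)\g{s}_{\g{b}_1,\g{v}_1}=\g{s}_{\g{b}_2,\g{v}_2}$, so $k\s{S}_{\g{b}_1,\g{v}_1}k^{-1}=\s{S}_{\g{b}_2,\g{v}_2}$. The isometry $k$ therefore maps orbits to orbits, since $k(\s{S}_1\cdot p)=\s{S}_2\cdot(kp)$.

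For the ``only if'' direction I will use orbit-equivalence invariants. An isometry carrying orbits to orbits preserves orbit dimension. Since $\s{S}_{\g{b},\g{v}}\subseteq AN$ acts freely (simple transitivity of $AN$), the orbit dimension is $\dim\g{s}_{\g{b},\g{v}}=\dim M-\dim\g{b}-\dim\g{v}$. This gives $\dim\g{b}_1+\dim\g{v}_1=\dim\g{b}_2+\dim\g{v}_2$.

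It remains to separate the case $\g{b}=\g{a}$ from $\g{b}=0$ when the sums agree. The plan is to use a geometric invariant of the foliation that distinguishes the two. When $\g{b}=\g{a}$, every orbit lies in a horosphere $N\cdot p$, so all orbits are congruent under $A$, and each orbit is the image of the orbit through $o$ under elements of $AN$ normalizing $\s{S}$; in fact $AN$ normalizes $\g{n}\ominus\g{v}$ only partially, so instead I would argue via the section. If the sections have dimension $k\ge2$, compare as follows.

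For $\g{b}=\g{a}$, the section $\Sigma$ is $\R\s{H}^k(2)$ tangent to $\g{a}\oplus(1-\theta)\g{v}$. The foliation restricted to $\Sigma$ is determined by the horospheres, and all leaves are congruent, with mean curvature of constant norm as one moves along the geodesic $A\cdot o$. For $\g{b}=0$, the $A$-translates give orbits whose mean curvature varies, or the orbit through $o$ is minimal in one case and not the other.

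Concretely, I would compute the mean curvature vector $\mathcal H$ at $o$ of $\s{S}_{\g{b},\g{v}}\cdot o$ using the standard Levi-Civita formula on $AN$, namely $\langle\nabla_XY,Z\rangle$ via brackets and $\ad^*$. For $\g{b}=\g{a}$ this gives $\mathcal H$ proportional to $B$ with a fixed nonzero norm determined by $\tr\ad(B)|_{\g{n}\ominus\g{v}}$. For $\g{b}=0$, I expect the orbits to be a one-parameter family (parametrized by the section), among which the norm of the mean curvature is not constant; for example, the solvable-type orbit through $o$ is minimal exactly when $\g{v}$ is as constructed. Since an orbit equivalence maps leaves to leaves isometrically, the set of values of $\lvert\mathcal H\rvert$ over leaves must coincide, and a constant set cannot match a nonconstant one.

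The main obstacle is this mean curvature computation. In particular, I must show that for $\g{b}=0$ the function $\lvert\mathcal H\rvert$ genuinely varies across leaves, which I would check along a curve in the section using the free $AN$-action to transport base points.
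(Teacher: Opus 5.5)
Your ``if'' direction (conjugating by an element of $K_0$), your orbit-dimension count $\dim\g{b}_1+\dim\g{v}_1=\dim\g{b}_2+\dim\g{v}_2$, and your idea of separating $\g{b}=0$ from $\g{b}=\g{a}$ by mean curvature all match the paper. The gap is in that last step. You leave it as an expectation, and you aim it at a harder and unnecessary target. You do not need to show that $\lvert\mathcal H\rvert$ varies across the leaves of $\s{S}_{0,\g{v}}$, and you do not prove it. It suffices that the leaf through $o$ is \emph{minimal}, and this is short. At $o$ the normal space is $\g{v}$, and for $\xi\in\g{v}$ one has $\langle\mathcal S_\xi X,X\rangle_{AN}=\frac12\langle[\theta X,X],\xi\rangle$. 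The paper then uses Lemma~\ref{lemma:action-on-abelian}: some $k\in K_0$ preserves $\g{v}$, hence preserves $\s{S}_{0,\g{v}}\cdot o$, and sends $\xi\mapsto-\xi$. Therefore $\tr\mathcal S_\xi=-\tr\mathcal S_\xi=0$. Alternatively, take the trace over an orthonormal basis adapted to $\g{a}\oplus(\g{g}_\alpha\ominus\g{v})\oplus\g{g}_{2\alpha}$; each $[\theta X,X]$ then lies in $\g{a}$, which is orthogonal to $\xi$.

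For $\g{b}=\g{a}$ you need that \emph{no} leaf is minimal, not just the leaf through $o$. Your remark that $AN$ normalizes $\g{n}\ominus\g{v}$ ``only partially'' is false, and it undercuts your own claim that all leaves are congruent. In fact $\g{n}\ominus\g{v}=(\g{g}_\alpha\ominus\g{v})\oplus\g{g}_{2\alpha}$ is an ideal of $\g{a}\oplus\g{n}$: $\ad(\g{a})$ acts by scalars on each root space, and $[\g{n},\g{n}]\subseteq\g{g}_{2\alpha}$. Hence $\s{S}_{\g{a},\g{v}}$ is normal in $AN$, and $\s{S}_{\g{a},\g{v}}\cdot g(o)=g(\s{S}_{\g{a},\g{v}}\cdot o)$ for every $g\in AN$. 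The congruence is by $AN$, not by $A$; translating along $A$ only reaches the leaves that meet $A\cdot o$, which is not all leaves once $\g{v}\neq 0$. Your detour through the section does not supply this fact. Combine the normality with your (correct) value $\tr\mathcal S_B=\tr\ad(B)\vert_{\g{n}\ominus\g{v}}=\frac12(\dim\g{g}_\alpha-\dim\g{v})+\dim\g{g}_{2\alpha}>0$ at $o$. Then every leaf of $\s{S}_{\g{a},\g{v}}$ is non-minimal, while $\s{S}_{0,\g{v}}$ has a minimal leaf. Since an orbit equivalence preserves minimality, this forces $\dim\g{b}_1=\dim\g{b}_2$, and your dimension count gives $\dim\g{v}_1=\dim\g{v}_2$.
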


\begin{proof}
Since the group $K_0$ acts transitively on abelian subspaces of the same dimension of $\g{g}_\alpha$ 
(see for example~\cite[Lemma~3.2]{DiazLorenzo}),
$\s{S}_{\g{b}_1,\g{v}_1}$ and $\s{S}_{\g{b}_2,\g{v}_2}$ are conjugate by an element of $K_0$ if
$\dim\g{b}_1=\dim\g{b}_2$ and $\dim\g{v}_1=\dim\g{v}_2$.
In particular, their actions are orbit equivalent.

For the converse, let $\mathcal{S}_\xi$ denote the shape operator of a subgroup $S$ of $AN$ 
with respect to a normal vector $\xi$.
Using the Koszul formula for left-invariant vectors of $AN$, it is easy to see that 
the shape operator satisfies
\begin{equation*}
    \langle \mathcal{S}_\xi X, X \rangle_{AN}
    =
    -\langle X,[X,\xi] \rangle_{AN}
    =
    -\frac{1}{2}\langle X,[X,\xi] \rangle
    =
    \frac{1}{2} \langle [\theta X,X],\xi \rangle
\end{equation*}
for all $X\in\g{s}$ and $\xi\in(\g{a}\oplus\g{n})\ominus\g{s}$.

If $\g{b}_i=0$ we consider the orbit through the origin $\s{S}_{0,\g{v}_i}\cdot o$.
As a submanifold of $AN$, its normal space at $o$ can be identified with $\g{v}_i$.
Let $\xi\in\g{v}_i$.
By Lemma~\ref{lemma:action-on-abelian} there exists $k\in K_0$ such that $\Ad(k)$ leaves $\g{v}_i$ invariant 
and maps $\xi$ to $-\xi$.
In particular, $\Ad(k)$ leaves $\g{s}_{0,\g{v}_i}$ invariant, 
so $k$ maps $\s{S}_{0,\g{v}_i}\cdot o$ to itself
sending $\xi$ to $-\xi$.
Since $k$ is an extrinsic isometry, 
$-\mathcal{S}_\xi=\mathcal{S}_{k_*\xi}=k_*\mathcal{S}_\xi k_*^{-1}$,
which implies $\tr\mathcal{S}_\xi=0$.
As $\xi\in\g{v}_i$ is arbitrary,
we conclude that $\s{S}_{0,\g{v}_i}\cdot o$ is minimal.

If $\g{b}_i=\g{a}$ then $\g{s}_{\g{a},\g{v}_i}\subseteq\g{n}$ is a nilpotent ideal of $\g{a}\oplus\g{n}$.
Thus, $\s{S}_{\g{a},\g{v}_i}$ is a nilpotent normal subgroup of $AN$.
This implies that for each $g\in AN$, 
$\s{S}_{\g{a},\g{v}_i}\cdot g(o)=gg^{-1}\s{S}_{\g{a},\g{v}_i}g\cdot o=g(\s{S}_{\g{a},\g{v}_i}\cdot o)$,
so any two orbits of $\s{S}_{\g{a},\g{v}_i}$ are isometrically congruent to each other.
Hence, one orbit is minimal if and only if all of them are.
We calculate the mean curvature vector of $\s{S}_{\g{a},\g{v}_i}\cdot o$ at $o$.
Note that the normal space of $\s{S}_{\g{a},\g{v}_i}\cdot o$ at $o$ can be identified with
$\g{a}\oplus\g{v}_i$.
If $\xi\in\g{v}_i$, the same argument as above implies $\tr\mathcal{S}_\xi=0$, 
so it suffices to calculate $\mathcal{S}_B$.
If $U\in\g{g}_\alpha\ominus\g{v}_i$ we get
\[
\langle\mathcal{S}_B U,U\rangle_{AN}
=\frac{1}{2}\langle[\theta U,U],B\rangle
=\frac{1}{2}\langle \lvert U \rvert^2 H_{\alpha},B\rangle
=\frac{1}{4}\lvert U \rvert^2
=\frac{1}{2}\lvert U\rvert_{AN}^2.
\]
Similarly, $\langle\mathcal{S}_B X,X\rangle_{AN}=\lvert X\rvert_{AN}^2$ for $X\in\g{g}_{2\alpha}$.
Altogether this implies that the mean curvature vector of $\s{S}_{\g{a},\g{v}_i}\cdot o$ at $o$ is
\[
\frac{1}{2}(\dim\g{g}_\alpha-\dim\g{v}+2\dim\g{g}_{2\alpha})B,
\]
which is nonzero.

Therefore, if the actions of $\s{S}_{\g{b}_1,\g{v}_1}$ and $\s{S}_{\g{b}_2,\g{v}_2}$ are orbit equivalent 
we must have $\g{b}_1=\g{b}_2$.
For dimension reasons we also need $\dim\g{v}_1=\dim\g{v}_2$, as claimed.
\end{proof}

\section{General results}\label{sec:general-results}

From now on, our aim is to show that any homogeneous polar foliation on a rank one symmetric space of noncompact type is induced by the action of a group whose action is orbit equivalent to one of the examples given in Theorem~\ref{th:main}~(\ref{th:main-examples}).
We follow the notation from Section~\ref{sec:examples}.

We assume that $H$ is a closed subgroup of $G$ that acts polarly on $M$ inducing a foliation.
The action of $H$ is assumed to be nontrivial and nontransitive.

A Borel subalgebra of a Lie algebra $\g{g}$ is a maximal solvable subalgebra of $\g{g}$.
Borel subalgebras of semisimple Lie algebras have been described in~\cite{Mostow}.

If $\g{g}$ is the Lie algebra of a symmetric space of rank one and noncompact type, 
then the description of its Borel subalgebras is relatively easy.
In this case there exists a Cartan decomposition $\g{g}=\g{k}\oplus\g{p}$ and we have two possibilities:
either it is maximally compact, that is, a maximal abelian subspace of $\g{k}$; 
or maximally noncompact, that is, $\g{t}\oplus\g{a}\oplus\g{n}$, 
where $\g{g}=\g{k}\oplus\g{a}\oplus\g{n}$ is an Iwasawa decomposition and $\g{t}$ is a maximal abelian subspace of
$\g{k}_0=\g{k}\cap\g{g}_0$.

Borel subalgebras are relevant in this context because of the following result~\cite[Proposition~4.2]{DiazLorenzo}:

\begin{proposition}\label{th:Borel:tan}
The leaves of a homogeneous polar foliation on a symmetric space of noncompact type $M=G/K$ coincide, up to isometric congruence, with the orbits of a connected closed solvable subgroup $S$ of $G$ whose Lie algebra $\g{s}$ is contained in a maximally noncompact Borel subalgebra of the form $\g{t}\oplus\g{a}\oplus\g{n}$, where $\g{t}\subseteq \g{k}_{0}$ is abelian.
\end{proposition}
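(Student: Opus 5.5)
The plan is to combine the two structural facts about solvable subgroups: a homogeneous foliation comes from a connected solvable group, and the maximal solvable subalgebras of $\g{g}$ are known.

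\emph{Step 1: a solvable group with the same orbits.} Let $H\subseteq G$ be closed, acting polarly with all orbits principal (they are all principal by \cite[Proposition~2.1]{BerndtDiazTamaru10}). Since $M$ is a Hadamard manifold, the isotropy groups $H_p$ are compact. Choose an Iwasawa-type decomposition of the identity component $H^0$, or equivalently a maximal connected solvable subgroup of a Levi factor extended by the radical, and take $S\subseteq H^0$ closed, connected and solvable with $H^0=S\,H^0_p$. Because all orbits have the same type, the same compact part can be used at every point. Then $S\cdot p=H^0\cdot p=H\cdot p$, since the orbits are connected leaves of the foliation, and $S$ still acts polarly with the same sections.

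\emph{Step 2: placing $\g{s}$ in a Borel subalgebra.} The Lie algebra $\g{s}$ lies in some Borel subalgebra of $\g{g}$. By the description in \cite{Mostow}, recalled above, this Borel subalgebra is either maximally compact or conjugate to $\g{t}\oplus\g{a}\oplus\g{n}$. In the maximally compact case $\g{s}$ is contained in a compact abelian algebra. Then $S$ has compact closure, and so it fixes a point of $M$ by Cartan's fixed point theorem. The orbit through that point is a singular point orbit, which contradicts the foliation assumption unless the action is trivial.

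\emph{Step 3: conjugating $\g{t}$ into $\g{k}_0$.} So after conjugation $\g{s}\subseteq\g{t}\oplus\g{a}\oplus\g{n}$, where $\g{t}$ is a maximal abelian subalgebra of the centralizer of $\g{a}$ in $\g{k}$. All such $\g{t}$ are conjugate under $K_0$, so we may take $\g{t}\subseteq\g{k}_0$, and $\g{t}$ is abelian. Since $\g{s}$ is a subalgebra, its image in the Borel subalgebra is closed, and $S$ can be replaced by the closure of the corresponding connected subgroup without changing the orbits. Conjugation by an element of $G$ acts as an isometric congruence, so the orbit foliation of $H$ is congruent to that of $S$, as claimed.

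\emph{Main obstacle.} The delicate point is Step 1: we need a closed solvable subgroup with exactly the same orbits, rather than merely an orbit-equivalent one. I would handle it with the Iwasawa decomposition of the semisimple part together with the fact that the compact isotropy groups are all conjugate.
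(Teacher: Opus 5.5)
The paper does not prove this statement; it quotes it from \cite[Proposition~4.2]{DiazLorenzo}, so your argument has to stand on its own. Step~1 is the right reduction: replace $H$ by $S=A_LN_LR$, built from an Iwasawa decomposition of a Levi factor. In real rank one, which is the only case the paper uses later, Steps~2 and~3 then go through.

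The proposition, however, is stated for an arbitrary symmetric space of noncompact type, and there Step~2 fails. The dichotomy ``maximally compact or conjugate to $\g{t}\oplus\g{a}\oplus\g{n}$'' is only the rank-one description recalled in the paper. In general, Mostow's theorem gives one conjugacy class of Borel subalgebras $\g{t}'\oplus\g{a}'\oplus\g{n}'$ for each conjugacy class of Cartan subalgebras $\g{t}'\oplus\g{a}'$. Those with $0\neq\g{a}'$ not maximal abelian in $\g{p}$ are not conjugate into $\g{t}\oplus\g{a}\oplus\g{n}$.

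These intermediate Borel subalgebras cannot be discarded with the foliation hypothesis alone, as you do with the compact case. In $\g{sl}(3,\R)$ take $X=\operatorname{diag}(1,1,-2)$, $T=E_{12}-E_{21}$, $\g{n}'=\operatorname{span}\{E_{13},E_{23}\}$ and $\g{s}=\R(T+X)\oplus\g{n}'$. The corresponding closed subgroup is a simply connected solvable group $\R\ltimes\R^2$. It therefore acts freely and induces a homogeneous foliation of $\s{SL}(3,\R)/\s{SO}(3)$. Yet $T+X$ has eigenvalues $1\pm i$ and $-2$, so $\g{s}$ lies in no conjugate of $\g{t}\oplus\g{a}\oplus\g{n}$ (here $\g{k}_0=0$, so this is the algebra of traceless upper triangular matrices). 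What excludes this example is polarity, which your Steps~2 and~3 never use. Here $\g{s}_\g{p}^\perp$ is the $\g{p}$-part of the upper-left $\g{sl}(2,\R)$ block, and $[\g{s}_\g{p}^\perp,\g{s}_\g{p}^\perp]=\R T$ is not orthogonal to $T+X$.

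The missing step can be supplied with the criterion in Proposition~\ref{th:criterion}.
\begin{itemize}
\item After conjugation, $\g{s}\subseteq\g{t}'\oplus\g{a}'\oplus\g{n}'$ with $\g{a}'\subseteq\g{a}$, $\g{n}'\subseteq\g{n}$ and $\g{t}'\subseteq\g{k}\cap\g{m}'$, where $\g{m}'\oplus\g{a}'$ is the centralizer of $\g{a}'$.
\item Then $\g{p}\cap\g{m}'\subseteq\g{s}_\g{p}^\perp$, so polarity makes the $\g{t}'$-component of every element of $\g{s}$ orthogonal to $[\g{p}\cap\g{m}',\g{p}\cap\g{m}']$.
\item By invariance of the inner product, an element of $\g{k}\cap\g{m}'$ with this property centralizes $\g{p}\cap\g{m}'$, which contains $\g{a}\ominus\g{a}'$.
\item Hence these components lie in $\g{k}_0$ and span an abelian subspace. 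So $\g{s}\subseteq\g{t}\oplus\g{a}\oplus\g{n}$ for a maximal abelian $\g{t}\subseteq\g{k}_0$ containing them.
\end{itemize}
For $\g{a}'=0$ this forces $\g{s}=0$, which recovers your compact case without a fixed-point argument.

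Two smaller points.
\begin{itemize}
\item In Step~1, ``equivalently a maximal connected solvable subgroup of a Levi factor'' is not equivalent: a maximally compact Borel subgroup of $L$ would not work. You need $L=K_LA_LN_L$ with $K_L$ compact, which holds because $G$ is linear and so $L$ has finite center. Then $K_L$ fixes some point $p$. For $H$ connected, the passage to other points is as follows: $H_q=hH_ph^{-1}$ gives $H_{h^{-1}q}=H_p\supseteq K_L$, hence $H\cdot q=S\cdot q$. This is what ``the same compact part can be used at every point'' has to mean.
\item In Step~3, $\g{k}_0$ is by definition the centralizer of $\g{a}$ in $\g{k}$, so no conjugation is needed to place $\g{t}$ there. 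Also $gSg^{-1}$ is already closed, so the closure remark is unnecessary; in general, passing to closures can change orbits.
\end{itemize}
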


According to Proposition~\ref{th:Borel:tan} above, we may assume from now on that, up to orbit equivalence, the Lie algebra $\g{h}$ of $H$ is contained in a maximally noncompact subalgebra of the form $\g{t}\oplus\g{a}\oplus\g{n}$, with $\g{t}\subseteq \g{k}_0$ an abelian subspace.

As a matter of notation, if $\g{v}$ and $\g{w}$ are vector subspaces of $\g{g}$, 
by $\g{v}_{\g{w}}$ we mean the orthogonal projection of $\g{v}$ onto $\g{w}$.

We recall the result~\cite[Lemma~4.3]{DiazLorenzo}, whose short proof we include here for completeness.

\begin{lemma}\label{lemma:projectionImpliesContained}
Let $\g{q}$ be a Lie subalgebra of $\g{t}\oplus\g{a}\oplus\g{n}$
and $\lambda\in\{\alpha,2\alpha\}$.
If $\g{a}\oplus\g{g}_{\lambda}\subseteq \g{q}_{\g{a}\oplus\g{n}}$, 
then $\g{g}_{\lambda}\subseteq \g{q}$.
\end{lemma}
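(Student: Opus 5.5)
The idea is to use an element of $\g{q}$ whose $\g{a}$-component is nonzero, together with the fact that $\ad$ of such an element acts on $\g{g}_\lambda$ by a nonzero scalar plus a skew-symmetric part coming from $\g{t}$.

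First I pick an element $T_0+B+V_0\in\g{q}$ with $T_0\in\g{t}$ and $V_0\in\g{n}$; this exists because $B\in\g{a}\subseteq\g{q}_{\g{a}\oplus\g{n}}$. For each $X\in\g{g}_\lambda$ the hypothesis likewise gives $T_X+X+N_X\in\g{q}$, where $T_X\in\g{t}$ and $N_X\in\g{n}\ominus\g{g}_\lambda$. Since $\g{t}\oplus\g{a}$ normalizes each root space and $\g{t}$ is abelian,
\[
[T_0+B+V_0,\;T_X+X+N_X]=[T_0,X]+\lambda(B)X+(\text{terms in }\g{g}_{\mu},\ \mu>\lambda\text{ or }\mu\neq\lambda)+\dots
\]
This bracket has no $\g{t}$-component, because $\g{t}$ is abelian and $[\g{t}\oplus\g{a}\oplus\g{n},\g{t}\oplus\g{a}\oplus\g{n}]\subseteq\g{n}$. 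So the derived algebra satisfies $[\g{q},\g{q}]\subseteq\g{q}\cap\g{n}$.

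The key step is to control the $\g{g}_\lambda$-component. Let $\g{q}_1=\g{q}\cap\g{n}$ and let $\pi$ denote the projection onto $\g{g}_\lambda$.
\begin{itemize}
\item \emph{Case $\lambda=2\alpha$.} Here $\g{g}_{2\alpha}$ is central in $\g{n}$. For $X\in\g{g}_{2\alpha}$ I get $[T_0+B+V_0,\,T_X+X+N_X]=(\ad(T_0)+1)X+[B,N_X]-[T_X,V_0]+[V_0,N_X]$.
\item \emph{Case $\lambda=\alpha$.} I instead work modulo $\g{g}_{2\alpha}$, i.e.\ in $\g{n}/\g{g}_{2\alpha}$, where everything is abelian.
\end{itemize}
In both cases the $\g{g}_\lambda$-part of $\ad(T_0+B+V_0)$ acting on $\g{q}_1$ is $D=\lambda(B)\,\id+\ad(T_0)$. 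Since $\ad(T_0)$ is skew-symmetric, $D$ is invertible. The cleaner route is the following:
\begin{enumerate}
\item Show that $\pi(\g{q}_1)=\g{g}_\lambda$. The brackets computed above produce elements of $\g{q}_1$ whose $\g{g}_\lambda$-components are $DX$ plus a contribution of the form $-[T_X,\pi V_0]$.
\item Handle this extra term by first replacing $\g{q}$ with $\Ad(\Exp W)\g{q}$ for suitable $W\in\g{n}$, so as to kill $V_0$. The element $B+T_0$ acts invertibly on $\g{n}$, hence $\Ad(\Exp\g{n})$ can conjugate $T_0+B+V_0$ to $T_0+B$. I would prefer to avoid conjugation, since the lemma is stated for $\g{q}$ itself. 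However, the map $X\mapsto DX-[T_X,\pi V_0]$ is a perturbation of $D$ and may fail to be invertible, so I will iterate on the grading instead.
\end{enumerate}

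A simpler argument avoids this issue: the operator $\ad(T_0+B+V_0)$ preserves $\g{q}\cap\g{n}$, and its restriction has eigenvalues with real parts $\lambda(B)\neq0$ on graded pieces. I then consider the generalized eigenspaces of $\ad(T_0+B+V_0)$ on $\g{q}$ for eigenvalues with real part $\lambda(B)$. These eigenspaces lie in $\g{q}$, and since $\ad V_0$ is nilpotent and raises degree, they project isomorphically onto the corresponding eigenspaces in $\g{g}_\lambda\oplus(\text{higher})$.

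The main obstacle is making this triangular-grading argument rigorous, namely showing that $\g{q}$ contains all of $\g{g}_\lambda$ and not merely a graph over it. For $\lambda=2\alpha$ this works cleanly: the $2\alpha$-eigenspace lies inside $\g{g}_{2\alpha}$, and by step 1 it must be all of $\g{g}_{2\alpha}$. For $\lambda=\alpha$ I would conclude $\g{g}_\alpha\oplus\g{g}_{2\alpha}$-graphs lie in $\g{q}$, and then use brackets with $\g{q}$ to recover $\g{g}_{2\alpha}$ components. This last point is where I expect difficulty.
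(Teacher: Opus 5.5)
\textbf{There is a genuine gap: the proposal stops before the decisive step and, as you set it up, cannot be finished.} You yourself leave open how to get all of $\g{g}_\lambda$ inside $\g{q}$ rather than a graph over it. The trouble is at your first step. You treat the hypothesis as giving only elements $T_0+B+V_0$ and $T_X+X+N_X$ of $\g{q}$ with arbitrary extra parts $V_0\in\g{n}$, $N_X\in\g{n}\ominus\g{g}_\lambda$. With that weaker input the conclusion is false. Take nonzero $U\in\g{g}_\alpha$, $X\in\g{g}_{2\alpha}$, $c\neq 0$, and
\[
\g{q}=\Ad(\Exp(cJ_XU))(\R B\oplus\R U)=\R\Bigl(B-\frac{c}{2}J_XU\Bigr)\oplus\R\Bigl(U-\frac{c}{2}\lvert U\rvert^2X\Bigr).
\]
This is a subalgebra of $\g{a}\oplus\g{n}$ of exactly the shape you allow for $\lambda=2\alpha$:
\begin{itemize}
\item $\g{t}=0$, $T_0=0$ and $V_0=-\frac{c}{2}J_XU$;
\item for $Y\in\R X$, the element $Y+N_Y$ is a multiple of $U-\frac{c}{2}\lvert U\rvert^2X$, so $N_Y\in\R U$;
\item when $\dim\g{g}_{2\alpha}=1$, as in $\C\s{H}^n$, $\R X=\g{g}_{2\alpha}$, so every $Y\in\g{g}_{2\alpha}$ is covered.
\end{itemize}
Yet $\g{q}\cap\g{g}_{2\alpha}=0$. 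Of course $\g{q}_{\g{a}\oplus\g{n}}=\g{q}$ does not contain $B$, so this is no counterexample to the lemma itself. It does show where your eigenspace argument breaks. $\ad(B-\frac{c}{2}J_XU)$ acts on $\g{q}$ with eigenvalues $0$ and $\alpha(B)=\frac12$, so there is no $2\alpha(B)$-eigenspace at all. This holds even though $\pi(\g{q}\cap\g{n})=\g{g}_{2\alpha}$: the $\g{g}_{2\alpha}$-component is carried by an eigenvector of lower degree. So your claim ``by step 1 it must be all of $\g{g}_{2\alpha}$'' is wrong. No grading argument can succeed without using more of the hypothesis.

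\textbf{The missing idea is to read the hypothesis correctly.} It concerns the orthogonal projection onto $\g{a}\oplus\g{n}$ as a whole. Since $\g{q}\subseteq\g{t}\oplus\g{a}\oplus\g{n}$ and $\g{t}\perp\g{a}\oplus\g{n}$, an element of $\g{q}$ projecting to $B$ is exactly $T_0+B$. Likewise, one projecting to $X\in\g{g}_\lambda$ is exactly $T_X+X$. In other words, $V_0=0$ and $N_X=0$. Now use that $\g{t}$ is abelian and that $\g{t}\subseteq\g{k}_0$ centralizes $\g{a}$. This gives $[T_0+B,T_X+X]=(\lambda(B)\id+\ad(T_0))X=DX\in\g{q}$ for every $X\in\g{g}_\lambda$. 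Your own remark that $D$ is invertible on $\g{g}_\lambda$ then yields $\g{g}_\lambda=D(\g{g}_\lambda)\subseteq\g{q}$. This is precisely the paper's proof: it is your step~1 with the perturbation term $-[T_X,\pi V_0]$ absent. Neither conjugation by $\Exp(\g{n})$ nor the generalized eigenspace analysis is needed.
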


\begin{proof}
Recall that $\g{q}_{\g{a}\oplus\g{n}}$ denotes the orthogonal projection of $\g{q}$ onto $\g{a}\oplus\g{n}$.
Take $X\in\g{g}_{\lambda}\subseteq\g{q}_{\g{a}\oplus\g{n}}$. 
Then there exist vectors $S$, $T\in\g{t}$ such that $S+B$, $T+X\in\g{q}$. 
In particular, $\ad(S)X+\lambda(B)X=[S+B,T+X]\in\g{q}$. 
This means that the linear map $\ad(S)+\lambda(B)\id_{\g{g}_{\lambda}}$ preserves $\g{g}_{\lambda}$ 
and carries $\g{g}_{\lambda}$ to $\g{q}$. 
Since $S\in\g{t}$, the linear transformation $\ad(S)$ is skew-adjoint, so $\ad(S)+\lambda(B)\id_{\g{g}_{\lambda}}$ is a linear isomorphism and it follows that $\g{g}_{\lambda}\subseteq \g{q}$.
\end{proof}

Following the notation from Proposition~\ref{th:criterion},
we let $\Sigma\subseteq M$ be the section through $o$ of the action, and let $\g{h}_{\g{p}}^\perp$ be its tangent space at $o$.
Before continuing, we prove a result that will be used several times.


\begin{lemma}\label{lemma:equivalence-B+galpha}
Consider the following conditions for a subalgebra $\g{h}\subseteq\g{t}\oplus\g{a}\oplus\g{n}$:
\begin{enumerate}[{\rm (i)}]
\item\label{lemma-item:diagonal-normal} There exists $\xi_\alpha\in\g{g}_\alpha$ such that $B+(1-\theta)\xi_\alpha\in\g{h}_\g{p}^\perp$.
\item\label{lemma-item:normal-a-p1} Either $\g{a}\subseteq\g{h}_\g{p}^\perp$ or $\g{p}_\alpha\cap\g{h}_\g{p}^\perp\neq 0$.
\end{enumerate}
If $\g{h}$ satisfies~\textup{(\ref{lemma-item:diagonal-normal})}, then there exists $g\in N$
such that $\Ad(g)\g{h}\subseteq\g{t}\oplus\g{a}\oplus\g{n}$ satisfies~\textup{(\ref{lemma-item:normal-a-p1})}.
Conversely, if $\g{h}$ satisfies~\textup{(\ref{lemma-item:normal-a-p1})}, then $\g{h}$ is conjugate
under $N$ to a subalgebra of $\g{t}\oplus\g{a}\oplus\g{n}$ satisfying~\textup{(\ref{lemma-item:diagonal-normal})}.
\end{lemma}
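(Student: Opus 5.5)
The plan is to conjugate by a one-parameter subgroup $\Exp(\R\xi)\subseteq N$ and track explicitly how the orthogonality conditions defining $\g{h}_\g{p}^\perp$ change. There is no need to move along the section geometrically.

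\emph{Orthogonality conditions.} Write a general element of $\g{h}$ as $Y=T+cB+U+Z$, with $T\in\g{t}$, $c\in\R$, $U\in\g{g}_\alpha$ and $Z\in\g{g}_{2\alpha}$. The root spaces are mutually orthogonal, $\theta$ interchanges $\g{g}_\lambda$ and $\g{g}_{-\lambda}$, and $\g{t}\subseteq\g{k}_0$ is orthogonal to $\g{p}$. Hence
\[
\langle B+(1-\theta)\xi,\,Y\rangle = c+\langle \xi,U\rangle ,\qquad
\langle (1-\theta)\xi,\,Y\rangle=\langle \xi,U\rangle ,\qquad
\langle B, Y\rangle = c .
\]
Condition (i) therefore says that $c+\langle\xi_\alpha,U\rangle=0$ for every element of $\g{h}$. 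Condition (ii) says that either $c=0$ for all elements, or there is some $0\neq\xi\in\g{g}_\alpha$ with $\langle\xi,U\rangle=0$ for all elements.

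\emph{Effect of conjugating by $N$.} Let $g=\Exp(X)$ with $X=s\xi$, $\xi\in\g{g}_\alpha$, $s\in\R$. Then $\Ad(g)=e^{\ad X}$ preserves $\g{t}\oplus\g{a}\oplus\g{n}$, because $[X,\g{t}]\subseteq\g{g}_\alpha$, $[X,\g{a}]\subseteq\g{g}_\alpha$ and $[X,\g{n}]\subseteq\g{g}_{2\alpha}$. Modulo $\g{g}_{2\alpha}$ and with the $\g{t}$- and $\g{a}$-parts unchanged, we get
\[
\Ad(g)Y\equiv T+cB+\bigl(U+[X,T]+c[X,B]\bigr) = T+cB+\Bigl(U+s[\xi,T]-\tfrac{c s}{2}\xi\Bigr),
\]
using $\alpha(B)=1/2$. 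The key point is that $\langle[\xi,T],\xi\rangle=0$, since $\ad(T)$ is skew-adjoint for $T\in\g{k}_0$. Hence the new $\g{g}_\alpha$-component $U'$ satisfies
\[
\langle \xi,U'\rangle=\langle\xi,U\rangle-\tfrac{cs}{2}\lvert\xi\rvert^2 .
\]

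\emph{(i) implies (ii).} If $\xi_\alpha=0$, then $B\in\g{h}_\g{p}^\perp$ and (ii) already holds with $g=e$. Otherwise take $\xi=\xi_\alpha$ and $s=-2/\lvert\xi_\alpha\rvert^2$. Then $\langle\xi_\alpha,U'\rangle=-c+c=0$ for every element of $\Ad(g)\g{h}$, because each $\g{a}$-component $c$ is unchanged. Hence $(1-\theta)\xi_\alpha\in(\Ad(g)\g{h})_\g{p}^\perp$, so $\g{p}_\alpha\cap(\Ad(g)\g{h})_\g{p}^\perp\neq0$.

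\emph{(ii) implies (i).} If $\g{a}\subseteq\g{h}_\g{p}^\perp$, take $\xi_\alpha=0$. If instead $\langle\xi,U\rangle=0$ for all elements of $\g{h}$, with $\xi\neq0$, conjugate by $\Exp(s\xi)$ for any $s\neq0$. This gives $\langle\xi,U'\rangle=-\tfrac{cs}{2}\lvert\xi\rvert^2$, that is, $c+\langle\mu\xi,U'\rangle=0$ with $\mu=2/(s\lvert\xi\rvert^2)$. So $B+(1-\theta)\mu\xi$ is normal, which is (i).

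\emph{Main obstacle.} The only delicate step is the bookkeeping in $\Ad(g)Y$. One must check that the $\g{t}$-contribution $[X,T]$ is orthogonal to $\xi$, which holds because $X$ is a multiple of $\xi$ itself. One must also check that the $\g{g}_{2\alpha}$-terms and the second-order terms of $e^{\ad X}$ are irrelevant, since the test vectors involve only $\g{a}$ and $\g{p}_\alpha$. In addition, the $\g{a}$-component $c$ is preserved, which is why a single choice of $s$ works simultaneously for all of $\g{h}$.
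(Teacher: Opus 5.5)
Your proof is correct and follows the paper's argument. The paper conjugates by the same elements $\Exp(\mp 2\xi_\alpha/\lvert\xi_\alpha\rvert^2)\in N$ (your converse allows any nonzero multiple), and differs only in transporting the normal vector $B+\xi_\alpha$ by $\Ad(g^{-1})^*$ modulo $\theta\g{n}$ rather than transporting the elements of $\g{h}$ by $\Ad(g)$; the two computations are adjoint to each other, with your observation $\langle[\xi,T],\xi\rangle=0$ corresponding to the fact that $[\theta\xi,\xi]=\lvert\xi\rvert^2H_\alpha$ has no $\g{k}_0$-component.
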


Note that, in particular, conditions~(\ref{lemma-item:diagonal-normal})
and~(\ref{lemma-item:normal-a-p1}) are equivalent up to orbit equivalence.

\begin{proof}
Assume that there exists $\xi_\alpha\in\g{g}_\alpha$ such that $B+(1-\theta)\xi_\alpha\in\g{h}_\g{p}^\perp$.
If $\xi_\alpha=0$, this clearly implies that $\g{a}\subseteq\g{h}_\g{p}^\perp$ and it suffices to take $g$ to be the identity element.
Assume $\xi_\alpha\neq 0$.
Then we define $g=\Exp(-2\xi_\alpha/\lvert\xi_\alpha\rvert^2)\in N$ and consider the conjugate subgroup 
$\hat{H}=gHg^{-1}\subseteq {G}$.
The Lie algebra 
$\hat{\g{h}}=\Ad(g)\g{h}$ of $\hat{H}$ is orthogonal to
\begin{align*}
\Ad(g^{-1})^*(B+\xi_\alpha)={}
&e^{\frac{2}{\lvert\xi_\alpha\rvert^2}\ad(\xi_\alpha)^*}(B+\xi_\alpha)
=e^{-\frac{2}{\lvert\xi_\alpha\rvert^2}\ad(\theta\xi_\alpha)}(B+\xi_\alpha) \\
\equiv{}& B+\xi_\alpha-\frac{2\lvert\xi_\alpha\rvert^2}{\lvert\xi_\alpha\rvert^2}H_\alpha=\xi_\alpha \pmod{\theta\g{n}}.
\end{align*}
Since $\theta\g{n}$ is already orthogonal to $\Ad(g)\g{h}$, we obtain that $\xi_\alpha$ is perpendicular to $\Ad(g)\g{h}$.
Hence $(1-\theta)\xi_\alpha\in\hat{\g{h}}_\g{p}^\perp$ and $\g{p}_\alpha\cap\hat{\g{h}}_{\g{p}}^\perp\neq 0$.

Conversely, if $\g{a}\subseteq\g{h}_\g{p}^\perp$, then $B\in\g{h}_\g{p}^\perp$ and the result follows trivially.
Assume $\g{p}_\alpha\cap\g{h}_\g{p}^\perp\neq 0$. 
Then, there is a nonzero vector $\xi_\alpha\in\g{g}_\alpha$ such that $(1-\theta)\xi_\alpha\in\g{h}_\g{p}^\perp$.
We define $g=\Exp(2\xi_\alpha/\lvert\xi_\alpha\rvert^2)\in N$ and consider the conjugate subgroup $\hat{H}=g{H}g^{-1}\subseteq {G}$.
A calculation similar to the one above shows that $B+(1-\theta)\xi_\alpha\in\hat{\g{h}}_\g{p}^\perp$ and the result follows.
\end{proof}

Now we show that indeed one of these two equivalent conditions holds.
This could have been obtained as a consequence of~\cite[Lemma~4.4]{DiazLorenzo}, 
but we can simplify the proof in our case.

\begin{lemma}\label{lemma:B+galpha}
    The action of $H$ is orbit equivalent to the action of a group $\hat{H}$ whose Lie algebra is contained in $\g{t}\oplus\g{a}\oplus\g{n}$ and satisfying the following condition: there is $\xi_\alpha\in\g{g}_\alpha$ such that $B+(1-\theta)\xi_\alpha\in\hat{\g{h}}_\g{p}^\perp$.
\end{lemma}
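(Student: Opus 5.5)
By Lemma~\ref{lemma:equivalence-B+galpha}, it suffices to show that, up to conjugation by $N$ (and hence up to orbit equivalence), $\g{h}$ satisfies condition~(\ref{lemma-item:normal-a-p1}): either $\g{a}\subseteq\g{h}_\g{p}^\perp$ or $\g{p}_\alpha\cap\g{h}_\g{p}^\perp\neq 0$. Once we have this, the converse part of that lemma produces a conjugate $\hat{\g{h}}\subseteq\g{t}\oplus\g{a}\oplus\g{n}$ with $B+(1-\theta)\xi_\alpha\in\hat{\g{h}}_\g{p}^\perp$.

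\emph{Reduction to the projection $\g{q}$.} Put $\g{q}=\g{h}_{\g{a}\oplus\g{n}}$ and $\mathcal{V}=(\g{a}\oplus\g{n})\ominus\g{q}$. Since $\g{t}\oplus\theta\g{n}\perp\g{p}$ after projecting, $(1-\theta)/2$ identifies $\mathcal{V}$ isometrically with $\g{h}_\g{p}^\perp$. The action is nontransitive, so $\mathcal{V}\neq 0$.

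\emph{Key claim.} I will show that $\mathcal{V}\cap(\g{a}\oplus\g{g}_\alpha)\neq 0$. A nonzero vector $cB+\xi_\alpha$ in this intersection gives $cB+(1-\theta)\xi_\alpha\in\g{h}_\g{p}^\perp$. If $c\neq 0$, rescaling gives condition~(\ref{lemma-item:diagonal-normal}) directly. If $c=0$, then $\g{p}_\alpha\cap\g{h}_\g{p}^\perp\neq0$, which is condition~(\ref{lemma-item:normal-a-p1}).

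\emph{Proof of the claim, by contradiction.} Suppose $\mathcal{V}\cap(\g{a}\oplus\g{g}_\alpha)=0$. The orthogonal complement of the image of $\g{q}$ under projection to $\g{a}\oplus\g{g}_\alpha$ is exactly $\mathcal{V}\cap(\g{a}\oplus\g{g}_\alpha)$. Hence this projection is surjective, and $\g{h}$ contains:
\begin{itemize}
\item an element $B+T_0+Z_0$ with $T_0\in\g{t}$ and $Z_0\in\g{g}_{2\alpha}$;
\item for each $U\in\g{g}_\alpha$, an element $U+T_U+Z_U$ with $T_U\in\g{t}$ and $Z_U\in\g{g}_{2\alpha}$.
\end{itemize}
The aim is to show $\g{g}_{2\alpha}\subseteq\g{q}$. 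Then $\g{a}\oplus\g{g}_{2\alpha}\subseteq\g{q}$, and Lemma~\ref{lemma:projectionImpliesContained} gives $\g{g}_{2\alpha}\subseteq\g{h}$. This forces $\mathcal{V}\perp\g{g}_{2\alpha}$, i.e.\ $\mathcal{V}\subseteq\g{a}\oplus\g{g}_\alpha$, which contradicts $\mathcal{V}\neq0$ together with the assumption.

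\emph{Showing $\g{g}_{2\alpha}\subseteq\g{q}$.} Since $\g{t}$ is abelian, the bracket
\[
[U+T_U+Z_U,\,V+T_V+Z_V]=[U,V]+\ad(T_U)V-\ad(T_V)U+\ad(T_U)Z_V-\ad(T_V)Z_U
\]
lies in $\g{n}\cap\g{h}$. Its $\g{g}_{2\alpha}$-component is $[U,V]$ plus $\ad(\g{t})$-terms. Taking $V=J_XU$ and using $[U,J_XU]=\tfrac12\lvert U\rvert^2X$, the leading terms range over all of $\g{g}_{2\alpha}$.

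To remove the perturbations, I would next bracket with $B+T_0+Z_0$. On $\g{g}_{2\alpha}$, the operator $\ad(B+T_0)=2\alpha(B)\id+\ad(T_0)$ is invertible, since $\ad(T_0)$ is skew-adjoint. It scales the $\g{g}_\alpha$ and $\g{g}_{2\alpha}$ parts with different weights (eigenvalues of real part $1/2$ and $1$). Combining $X$ and $[B+T_0+Z_0,X]$ for $X\in\g{n}\cap\g{h}$ should then separate the $\g{g}_{2\alpha}$-component. I expect this to show $\g{h}\cap\g{g}_{2\alpha}$, and hence $\g{q}\cap\g{g}_{2\alpha}$, is all of $\g{g}_{2\alpha}$.

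\emph{Main obstacle.} This separation step is where I expect the difficulty. The $\ad(\g{t})$ contributions could rotate $\g{g}_\alpha$ and $\g{g}_{2\alpha}$ so that the image of $U\wedge V\mapsto[U,V]$ is partially cancelled. If the weight-separation argument fails, I would fall back on polarity, via Proposition~\ref{th:criterion}. The condition $[\g{h}_\g{p}^\perp,\g{h}_\g{p}^\perp]\perp\g{h}$, applied to vectors $(1-\theta)\xi$ with $\xi\in\mathcal{V}$ having nonzero $\g{g}_{2\alpha}$-part, produces elements of $\g{k}_0\oplus(1+\theta)\g{n}$ that must be orthogonal to $B+T_0+Z_0$ and to the elements $U+T_U+Z_U$. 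The resulting linear constraints on $\mathcal{V}$ should again force $\mathcal{V}\cap(\g{a}\oplus\g{g}_\alpha)\neq0$.
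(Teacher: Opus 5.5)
Your overall structure is right and matches the paper's. The claim $\mathcal{V}\cap(\g{a}\oplus\g{g}_\alpha)\neq 0$ is exactly the paper's second case, $(\g{h}+\g{g}_{2\alpha})_{\g{a}\oplus\g{n}}\neq\g{a}\oplus\g{n}$, and your contradiction hypothesis is its first case. However, the one step that carries content, $\g{g}_{2\alpha}\subseteq\g{q}$, is not proved, and the route you sketch does not prove it.

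\textbf{Where your route fails.} You bracket the pairs $U+T_U+Z_U$ and $V+T_V+Z_V$ first and separate weights afterwards. At best this yields $[U,V]+\ad(T_U)Z_V-\ad(T_V)Z_U\in\g{h}$. Even that needs the full $\ad(B+T_0)$-invariance of $\g{h}\cap\g{n}$ and a spectral-projection argument; the two vectors $X$ and $[B+T_0+Z_0,X]$ alone do not separate components once $\ad(T_0)X_{\g{g}_\alpha}\neq 0$. Nothing you write shows that these perturbed vectors span $\g{g}_{2\alpha}$. The fallback to polarity is not an argument, and it is not needed: the statement is purely Lie-algebraic plus non-transitivity.

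\textbf{The missing idea.} Kill the $\g{t}$-components before bracketing pairs, by applying the bracket with $B+T_0+Z_0$ to the generators themselves. Since $[\g{t},\g{t}]=0=[\g{t},\g{a}]$ and $\g{g}_{2\alpha}$ is central in $\g{n}$,
\[
[B+T_0+Z_0,\,U+T_U+Z_U]=\bigl(\tfrac12+\ad(T_0)\bigr)U+\bigl(1+\ad(T_0)\bigr)Z_U-\ad(T_U)Z_0\in\g{h}.
\]
Here $\tfrac12+\ad(T_0)$ is invertible on $\g{g}_\alpha$, because $\ad(T_0)$ is skew-adjoint. Hence for every $U\in\g{g}_\alpha$ there is $Y_U\in\g{g}_{2\alpha}$ with $U+Y_U\in\g{h}$, carrying no $\g{t}$-part. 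Then $[U+Y_U,V+Y_V]=[U,V]\in\g{h}$. Since $[U,J_XU]=\tfrac12\lvert U\rvert^2X$, this gives $\g{g}_{2\alpha}\subseteq\g{h}$, hence $\g{n}\subseteq\g{h}$. Also $B\in\g{q}$, because $B+T_0+Z_0\in\g{h}$ and $Z_0\in\g{h}$. So $\g{h}_\g{p}^\perp=0$, i.e. the action is transitive, a contradiction.

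\textbf{How the paper packages this.} $\tilde{\g{h}}=\g{h}+\g{g}_{2\alpha}$ is a subalgebra, since $\g{g}_{2\alpha}$ is an ideal of $\g{t}\oplus\g{a}\oplus\g{n}$. Under your hypothesis, $\tilde{\g{h}}_{\g{a}\oplus\g{n}}=\g{a}\oplus\g{n}$. Lemma~\ref{lemma:projectionImpliesContained}, applied to $\tilde{\g{h}}$ with $\lambda=\alpha$, then gives $\g{g}_\alpha\subseteq\tilde{\g{h}}$, which is precisely the statement about the elements $U+Y_U$. You invoked that lemma only for $\g{g}_{2\alpha}$, where it is superfluous: $\g{g}_{2\alpha}\subseteq\g{q}$ already gives $\mathcal{V}\perp\g{g}_{2\alpha}$ by definition. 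The place it is needed is for $\g{g}_\alpha$, applied to the enlarged algebra $\tilde{\g{h}}$.
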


\begin{proof}
We define the vector subspace $\tilde{\g{h}}=\g{h}+\g{g}_{2\alpha}$, which is also a Lie subalgebra of $\g{g}$.

Assume first that $\tilde{\g{h}}_{\g{a}\oplus\g{n}}=\g{a}\oplus\g{n}$.
According to Lemma~\ref{lemma:projectionImpliesContained}, we get $\g{n}\subseteq\tilde{\g{h}}$.
We show that $\g{n}\subseteq\g{h}$.
Let $X\in\g{g}_{2\alpha}$.
We can write $X=[U,V]$, with $U$, $V\in\g{g}_\alpha$.
Since $\g{n}\subseteq\tilde{\g{h}}=\g{h}+\g{g}_{2\alpha}$, 
there exist $Y$, $Z\in\g{g}_{2\alpha}$ such that $U+Y$, $V+Z\in\g{h}$.
But then, $X=[U,V]=[U+Y,V+Z]\in\g{h}$ and it follows that $\g{g}_{2\alpha}\subseteq\g{h}$.
This readily implies $\g{n}\subseteq\tilde{\g{h}}=\g{h}+\g{g}_{2\alpha}=\g{h}$,
which yields $\g{h}_{\g{p}}^\perp\subseteq \g{a}$.
Since the action of $H$ is not transitive, this gives $\g{a}=\g{h}_\g{p}^\perp$ 
and the result follows in this case.

Assume now that $\tilde{\g{h}}_{\g{a}\oplus\g{n}}\neq\g{a}\oplus\g{n}$.
Thus, there exists a nonzero vector in $\g{a}\oplus\g{n}$ that is orthogonal to both $\g{h}$ and $\g{g}_{2\alpha}$.
We may write this vector as $\xi_0+\xi_\alpha$, where $\xi_0\in \g{a}$ and $\xi_\alpha\in \g{g}_{\alpha}$, and because $\xi_0+\xi_\alpha$ and $\theta\g{n}$ are both orthogonal to $\g{h}$, we obtain that $\xi_0+(1-\theta)\xi_\alpha$ is orthogonal to $\g{h}$.
If $\xi_0\neq 0$ we are done after normalization.
If $\xi_0=0$, we simply apply Lemma~\ref{lemma:equivalence-B+galpha} to finish.
\end{proof}

We end this section by recalling~\cite[Proposition~4.8]{DiazLorenzo} adapted to our particular case:

\begin{proposition}\label{prop:orbit-equivalence}
Let $H$ be a connected closed subgroup of $G$ inducing a homogeneous polar foliation on a rank one symmetric space of noncompact type $M$.
Assume that its Lie algebra is contained in a maximally noncompact Borel subalgebra $\g{t}\oplus\g{a}\oplus\g{n}$,
and $\g{h}_{\g{a}\oplus\g{n}}=\g{z}\oplus(\g{n}\ominus\g{v}_\alpha)$, where $\g{v}_\alpha$ is an abelian subspace of $\g{g}_\alpha$, and $\g{z}$ is a subspace of $\g{a}$.
Let $\tilde{H}$ be the connected Lie subgroup of $G$ whose Lie algebra is $\tilde{\g{h}}=\g{z}\oplus(\g{n}\ominus\g{v}_\alpha)$.
Then, $H$ and $\tilde{H}$ have the same orbits.
\end{proposition}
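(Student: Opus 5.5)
The plan is to show that every $H$-orbit is an $\tilde H$-orbit. I would begin with the point $o$ and then use a transitivity argument. Both $\g{h}$ and $\tilde{\g{h}}$ lie in $\g{t}\oplus\g{a}\oplus\g{n}$, and $\g{t}\subseteq\g{k}_0\subseteq\g{k}$ acts trivially at $o$ to first order. Hence $T_o(H\cdot o)=(\g{h})_{\g{a}\oplus\g{n}}\cdot o=\tilde{\g{h}}\cdot o=T_o(\tilde H\cdot o)$, and the same holds with $\g{p}$-projections, so $\g{h}_\g{p}^\perp=\tilde{\g{h}}_\g{p}^\perp=\g{z}'\oplus(1-\theta)\g{v}_\alpha$, where $\g{z}'=\g{a}\ominus\g{z}$. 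The real goal, however, is a statement at every point. For that I would use that $AN$ acts simply transitively on $M$: every point equals $g\cdot o$ for a unique $g\in AN$, and $T_{g\cdot o}(H\cdot g\cdot o)=\Ad(g^{-1})\g{h}$ read at $o$.

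First I check that $\tilde{\g{h}}$ is indeed a subalgebra and that $\tilde H\subseteq AN$ acts freely. Since $\g{v}_\alpha$ is abelian, $[\g{g}_\alpha\ominus\g{v}_\alpha,\g{g}_\alpha\ominus\g{v}_\alpha]\subseteq\g{g}_{2\alpha}$, and $\g{a}$ normalizes every root space. So $\tilde{\g{h}}=\g{s}_{\g{b},\g{v}_\alpha}$ with $\g{b}=\g{a}\ominus\g{z}$, and Proposition~\ref{prop:examples} applies.

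The key step is to show that $\Ad(g^{-1})\g{h}$ and $\Ad(g^{-1})\tilde{\g{h}}$ have the same projection onto $\g{a}\oplus\g{n}$ for all $g\in AN$. Equivalently, the distributions $p\mapsto T_p(H\cdot p)$ and $p\mapsto T_p(\tilde H\cdot p)$ coincide on $M$. Both foliations are then integral foliations of the same distribution with connected leaves, so their leaves agree. Write an element of $\g{h}$ as $T+X$ with $T\in\g{t}$ and $X\in\tilde{\g{h}}$. For $g\in AN$ we have $\Ad(g^{-1})(T+X)=\Ad(g^{-1})T+\Ad(g^{-1})X$. Because $T\in\g{k}_0$ normalizes $\g{n}$, we get $\Ad(g^{-1})T\equiv T \pmod{\g{a}\oplus\g{n}}$, more precisely $\Ad(g^{-1})T - T\in\g{n}$. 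This correction term $[T,\cdot]$ lies in $\g{n}$ because $\Ad(a)$ fixes $T$ and $\Ad(n^{-1})T=T+(\text{terms in }\g{n})$. Projecting to $\g{a}\oplus\g{n}$ therefore gives $\Ad(g^{-1})X+(\Ad(g^{-1})T-T)$. So I must show $\Ad(g^{-1})T-T\in\Ad(g^{-1})\tilde{\g{h}}$, i.e.\ $T-\Ad(g)T\in\tilde{\g{h}}$ for $g\in AN$. By connectedness it suffices to show $[\g{a}\oplus\g{n},T]\subseteq\tilde{\g{h}}$, together with $\tilde{\g{h}}$ being invariant under $\ad$ of whatever appears, or more cleanly that $\ad(T)$ maps $\g{n}$ into $\g{n}\ominus\g{v}_\alpha$.

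The main obstacle is therefore to prove that $[T,\g{n}]\perp\g{v}_\alpha$ for every $T\in\g{t}$ appearing as a $\g{t}$-component of $\g{h}$. Since $\ad(T)$ is skew, this is equivalent to $[T,\g{v}_\alpha]\subseteq(\g{g}_\alpha\ominus\g{v}_\alpha)^{\perp}\cap\g{g}_\alpha=\g{v}_\alpha$ together with $[T,\g{v}_\alpha]\perp\g{v}_\alpha$. The way I would get this is from polarity, using Proposition~\ref{th:criterion}: $[\g{h}_\g{p}^\perp,\g{h}_\g{p}^\perp]$ must be orthogonal to $\g{h}$. Take $U,V\in\g{v}_\alpha$. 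Then $[(1-\theta)U,(1-\theta)V]=(1+\theta)[\theta U,V]$ modulo terms in $(1+\theta)\g{g}_{2\alpha}$, which vanish since $[U,V]=0$. This bracket is a $\g{k}_0$-element, and its pairing with $T+X\in\g{h}$ is $\langle [\theta U,V],T\rangle$ up to sign, which equals $\pm\langle V,[T,U]\rangle$ by invariance. So orthogonality yields $\langle [T,U],V\rangle=0$ for all $U,V\in\g{v}_\alpha$. Separately, one must show $[T,\g{v}_\alpha]$ has no component in $\g{g}_\alpha\ominus\g{v}_\alpha$. For this I would use that $\g{h}$ is a subalgebra whose projection is $\tilde{\g{h}}$. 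If $T+X\in\g{h}$ and $Y+W\in\g{h}$ with $W$ ranging over $\g{g}_\alpha\ominus\g{v}_\alpha$, then the brackets $[T,W]+\dots$ must project into $\tilde{\g{h}}$. This gives $[T,\g{g}_\alpha\ominus\g{v}_\alpha]\subseteq\g{g}_\alpha\ominus\g{v}_\alpha$ modulo contributions of $\g{a}$ and $\g{g}_{2\alpha}$, which land in other root spaces. By skew-symmetry this gives $[T,\g{v}_\alpha]\subseteq\g{v}_\alpha$, and combined with the previous orthogonality, $[T,\g{v}_\alpha]=0$. Hence $\ad(T)$ preserves $\g{n}\ominus\g{v}_\alpha$. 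It also preserves $\g{a}$ trivially, so $\ad(T)\tilde{\g{h}}\subseteq\tilde{\g{h}}$ and $[T,\g{a}\oplus\g{n}]\subseteq\g{n}\ominus\g{v}_\alpha\subseteq\tilde{\g{h}}$. The displayed projection is then contained in $\Ad(g^{-1})\tilde{\g{h}}$. A dimension count (both $H$ and $\tilde H$ act freely up to compact isotropy, with orbits of dimension $\dim\tilde{\g{h}}$) gives equality of the distributions, and hence equal orbits.
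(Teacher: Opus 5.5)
\emph{Note: the paper does not prove this statement itself (it is quoted from [DiazLorenzo, Proposition~4.8]), so your argument is assessed on its own.}

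Your reduction is sound. The two orbit distributions coincide if and only if $T-\Ad(g)T\in\tilde{\g{h}}$ for all $g\in AN$ and $T\in\pi_{\g{t}}(\g{h})$. Differentiating at $g=e$ shows this is in fact equivalent to $[\pi_{\g{t}}(\g{h}),\g{v}_\alpha]=0$. For the sufficiency direction, the fact you need is that $\g{n}\ominus\g{v}_\alpha$ is an ideal of $\g{a}\oplus\g{n}$; $\tilde{\g{h}}$ itself need not be $\ad(\g{a}\oplus\g{n})$-invariant. Your polarity computation giving $[T,\g{v}_\alpha]\perp\g{v}_\alpha$ is also correct.

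The gap is the claim that the subalgebra property gives $[T,\g{v}_\alpha]\perp\g{g}_\alpha\ominus\g{v}_\alpha$. In $[T+X,S+W]=[T,W]-[S,X]+[X,W]$, the term $[S,X_\alpha]$ (with $X_\alpha$ the $\g{g}_\alpha$-component of $X$) lies in $\g{g}_\alpha$, not in another root space, and it can have a $\g{v}_\alpha$-component. Let $\phi\colon\tilde{\g{h}}\to\g{t}$ be linear with $\phi(Y)+Y\in\g{h}$. What you actually get is $\langle W,[T,V]\rangle=\langle X_\alpha,[\phi(W),V]\rangle$ for all $T+X\in\g{h}$ with $T\in\g{t}$, $W\in\g{g}_\alpha\ominus\g{v}_\alpha$, $V\in\g{v}_\alpha$. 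Combined with your polarity identity, this forces $[\phi(\g{z}\oplus\g{g}_{2\alpha}),V]=0$ and $[\g{h}\cap\g{t},V]=0$. On $\g{g}_\alpha\ominus\g{v}_\alpha$, however, it only says that $X\mapsto[\phi(X),V]$ is \emph{self-adjoint}, not that it vanishes.

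No argument built only from the subalgebra property and the conditions of Proposition~\ref{th:criterion} at $o$ can work. In $\C\s{H}^3$, take unit vectors $v,w\in\g{g}_\alpha$ with $w\perp\F v$, $Z\in\g{g}_{2\alpha}$ with $\lvert Z\rvert_{AN}=1$, and $T\in\g{k}_0$ with $[T,v]=J_Zv$, $[T,J_Zv]=-v$, $[T,\F w]=0$. Then $\g{h}=\R(T+J_Zv)\oplus\F w\oplus\g{g}_{2\alpha}$ is a subalgebra of $\R T\oplus\g{a}\oplus\g{n}$ with $\g{h}_{\g{a}\oplus\g{n}}=\g{n}\ominus\R v$. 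Moreover $\g{h}_{\g{p}}^\perp=\g{a}\oplus\R(1-\theta)v$ is a Lie triple system and $[\g{h}_{\g{p}}^\perp,\g{h}_{\g{p}}^\perp]=\R(1+\theta)v\perp\g{h}$. Yet $[T,v]\neq 0$. Indeed $\Ad(\Exp v)\g{h}=\R T\oplus\F w\oplus\g{g}_{2\alpha}$, so the $H$-orbit through $\Exp(-v)\cdot o$ is $3$-dimensional and the conclusion fails.

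So the absence of singular orbits must be used at this point, not only in your final dimension count; it is exactly what excludes nonzero eigenvalues. Suppose $[\phi(X),V]=\lambda X$ with $\lambda\neq 0$ and $0\neq X\in\g{g}_\alpha\ominus\g{v}_\alpha$, and put $Y=-\frac{1}{2\lambda}[V,X]\in\g{g}_{2\alpha}$. Since $[\phi(Y),V]=0$ and $[V,[V,X]]=0$, a direct computation gives $\Ad(\Exp(V/\lambda))\bigl(\phi(X)+X+\phi(Y)+Y\bigr)=\phi(X)+\phi(Y)\in\g{t}$. This element does not centralize $V$, whereas $\g{h}\cap\g{t}$ does and is therefore fixed by $\Ad(\Exp(V/\lambda))$. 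Hence the isotropy algebra of $H$ at $\Exp(-V/\lambda)\cdot o$, which is conjugate to $\Ad(\Exp(V/\lambda))\g{h}\cap\g{t}$, has larger dimension than $\g{h}\cap\g{t}$. This contradicts that all isotropy groups are conjugate; it is the same mechanism used in the proof of Proposition~\ref{prop:section-in-a+p1}. Thus the self-adjoint operator vanishes, so $[\pi_{\g{t}}(\g{h}),\g{v}_\alpha]=0$, and from there your argument goes through.
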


Although the results presented in this section are valid for all hyperbolic spaces,
in order to prove Theorem~\ref{th:main}~(\ref{th:main-classification}) we will need to consider the cases of $\H\s{H}^n$
and $\O\s{H}^2$ separately.

\section{Polar foliations on quaternionic hyperbolic spaces}\label{sec:quaternionic}

Let $H$ be a Lie group acting polarly on $\H\s{H}^n$ (where $n\geq 2$) inducing a foliation.
It follows from Proposition~\ref{th:Borel:tan} that we may assume (up to orbit equivalence) that $H\subseteq \s{Sp}(1,n)$ is solvable and its Lie algebra $\g{h}$ is contained in a maximally noncompact subalgebra of the form $\g{t}\oplus\g{a}\oplus\g{n}$, where $\g{t}\subseteq \g{k}_0$ is abelian.

\begin{proposition}\label{prop:section-constant-curvature}
Suppose $H$ is a closed connected subgroup of $\s{Sp}(1,n)$ acting polarly on $M=\H\s{H}^n$ in such a way that its orbits form a homogeneous foliation.
Let $\Sigma$ be a section of the action.
Then, either $\Sigma$ is totally real or the action of $H$ is trivial.
\end{proposition}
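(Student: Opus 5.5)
The plan is to combine the classification of totally geodesic submanifolds of $\H\s{H}^n$ from \S\ref{sec:fhn-tg-submanifolds} with the polarity criterion of Proposition~\ref{th:criterion}.

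We may assume the action is nontrivial; if it is transitive, $\Sigma$ is a point and there is nothing to prove, so also assume $\dim\Sigma\geq 1$.

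\textbf{Reduction to a complex line in the section.} By Proposition~\ref{th:Borel:tan} and Lemma~\ref{lemma:B+galpha}, after replacing $H$ by an orbit equivalent group we may assume $\g{h}\subseteq\g{t}\oplus\g{a}\oplus\g{n}$ and $B+(1-\theta)\xi_\alpha\in\g{h}_\g{p}^\perp$ for some $\xi_\alpha\in\g{g}_\alpha$. Using Lemma~\ref{lemma:equivalence-B+galpha} we may further assume that $\g{a}\subseteq\g{h}_\g{p}^\perp$ or $\g{p}_\alpha\cap\g{h}_\g{p}^\perp\neq0$. In both cases we obtain a unit vector $\eta\in\g{h}_\g{p}^\perp$ with $\eta=B$ or $\eta\in\g{p}_\alpha$.

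Suppose $\Sigma$ is not totally real. Then by the classification $\Sigma$ is a $\C\s{H}^k$, an $\H\s{H}^k$ or an $\R\s{H}^k(1)$ with $k\geq2$. In each of these, every tangent vector $\eta$ has a curvature $-1$ direction inside $T_o\Sigma$. By \eqref{eq:jacobi}, this means $J_Z\eta\in\g{h}_\g{p}^\perp$ for some unit $Z\in\g{g}_{2\alpha}$. The aim is to show that this forces $\g{h}_{\g{a}\oplus\g{n}}=0$. Then $\g{h}\subseteq\g{t}$ fixes $o$, and since the orbits form a foliation, $H$ acts trivially, which is a contradiction.

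\textbf{Case $\eta=B$.} Here $J_ZB=\tfrac12(1-\theta)Z\in\g{h}_\g{p}^\perp$, so $Z\perp\g{h}$. The criterion also gives $[B,(1-\theta)Z]\propto(1+\theta)Z\perp\g{h}$. Hence $\g{h}_{\g{a}\oplus\g{n}}\subseteq\g{n}\ominus\R Z$.

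Let $W$ denote the projection of $\g{h}$ onto $\g{g}_\alpha$. Bracketing elements of $\g{h}$ shows that $[W,W]$ projects trivially onto $\R Z$. So $\langle J_ZW,W\rangle=0$, that is, $W$ is totally real for $J_Z$ and $\dim W\leq\frac12\dim\g{g}_\alpha$.

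Consequently $\g{h}_\g{p}^\perp\supseteq\g{a}\oplus\R(1-\theta)Z\oplus(1-\theta)(\g{g}_\alpha\ominus W)$, with $\g{g}_\alpha\ominus W\supseteq J_ZW$. Now impose that $\g{h}_\g{p}^\perp$ is a Lie triple system and that its brackets are orthogonal to $\g{h}$. For $U\in\g{g}_\alpha\ominus W$, the bracket of $(1-\theta)Z$ with $(1-\theta)U$ has a $\g{g}_\alpha$-component proportional to $J_ZU$. Iterating, this forces $\g{g}_\alpha\ominus W$ to be $J_Z$-invariant, hence $W$ to be $J_Z$-invariant. Together with $J_ZW\perp W$, this gives $W=0$.

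Then $\g{h}_{\g{a}\oplus\g{n}}\subseteq\g{g}_{2\alpha}\ominus\R Z$. Finally, the Lie triple condition on $\g{a}\oplus\g{p}_\alpha\oplus\R(1-\theta)Z\subseteq\g{h}_\g{p}^\perp$ produces $[\g{p}_\alpha,\g{p}_\alpha]$-terms landing in the remaining $\g{p}_{2\alpha}$ directions. This forces $\g{g}_{2\alpha}\perp\g{h}$ as well, so $\g{h}_{\g{a}\oplus\g{n}}=0$.

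\textbf{Case $\eta=(1-\theta)\xi$ with $\xi\in\g{g}_\alpha$.} The argument is analogous. Now $(1-\theta)J_Z\xi\in\g{h}_\g{p}^\perp$, and the bracket $[(1-\theta)\xi,(1-\theta)J_Z\xi]$ has a $\g{k}$-part involving $(1+\theta)Z$ and $[\theta\xi,J_Z\xi]$. Its orthogonality to $\g{h}$, together with Lemma~\ref{lemma:projectionImpliesContained}, constrains the $\g{a}$- and $\g{g}_{2\alpha}$-projections of $\g{h}$. Alternatively, one can use the conjugation of Lemma~\ref{lemma:equivalence-B+galpha} to transport the complex line $\mathrm{span}\{\eta,J_Z\eta\}$ so that it passes through $B$, which reduces this case to the previous one. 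I would try this reduction first, since a $\mathrm{span}\{\eta,J_Z\eta\}$-type plane is preserved by the relevant $N$-conjugation up to the explicit computation in that lemma.

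\textbf{Main obstacle.} The hardest step is the Lie-triple-system bookkeeping in the case $\eta=B$: showing that $W$ must be both $J_Z$-invariant and $J_Z$-totally real, and then killing the $\g{g}_{2\alpha}$-part. This needs careful use of the quaternionic relations $J_{X_i}J_{X_{i+1}}=-J_{X_{i+2}}$ to control mixed brackets between $\g{p}_\alpha$ and $\g{p}_{2\alpha}$.
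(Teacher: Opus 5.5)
You handle the case $\eta=B$ essentially correctly, and the argument is uniform. It disposes of $\C\s{H}^k$, $\H\s{H}^k$ and $\R\s{H}^k(1)$ through $B$ at once, whereas the paper treats $\H\s{H}^k$ separately via Gray's theorem. One detail needs fixing. To get $\langle[U_1,U_2],Z\rangle=0$ for $U_i\in W$ you need elements $T_i+U_i\in\g{h}$ with no $\g{g}_{2\alpha}$-component. Otherwise terms $\langle[T_1,X_2],Z\rangle$ with $X_2\in\g{g}_{2\alpha}$ survive. The fix is the $R_B$-invariant splitting $\g{h}_\g{p}^\perp=\g{a}\oplus(1-\theta)\g{v}\oplus(1-\theta)\g{w}$, which gives $\g{h}_{\g{a}\oplus\g{n}}=(\g{g}_\alpha\ominus\g{v})\oplus(\g{g}_{2\alpha}\ominus\g{w})$. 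Also, no iteration is needed: a single bracket already gives $J_Z\g{v}\subseteq\g{v}$.

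The genuine gap is the case $(1-\theta)\xi\in\g{h}_\g{p}^\perp$ with $B\notin\g{h}_\g{p}^\perp$, which you do not actually treat. Your preferred reduction to the case $\eta=B$ cannot work:
\begin{itemize}
\item Since $\g{h}\subseteq\g{t}\oplus\g{a}\oplus\g{n}$, both $H$ and $N$ fix the point at infinity $x_\infty$ of the geodesic $t\mapsto\exp(tB)\cdot o$.
\item $B\in\g{h}_\g{p}^\perp$ holds if and only if $x_\infty$ lies in the ideal boundary of the section $\Sigma$ through $o$.
\item For $g\in N$, the section of $gHg^{-1}$ through $o$ is $gh\Sigma$ for some $h\in H$, and $gh$ fixes $x_\infty$.
\item Hence $B\in(\Ad(g)\g{h})_\g{p}^\perp$ if and only if $B\in\g{h}_\g{p}^\perp$.
\end{itemize}
This is consistent with Lemma~\ref{lemma:equivalence-B+galpha}, which only ever produces $B+(1-\theta)\xi$, never $B$. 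Your other suggestion (``constrains the projections'') does not identify any contradiction. The direct analogue of your $B$-argument also breaks down, because the eigenspaces of $R_{(1-\theta)\xi}$ are not adapted to the root decomposition, so $\g{h}_{\g{a}\oplus\g{n}}$ does not split.

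What is missing is a subcase analysis according to the multiplicity of the eigenvalue $-1$ of $R_{(1-\theta)\xi}$ on $\g{h}_\g{p}^\perp$, each with its own mechanism.
\begin{itemize}
\item \emph{Multiplicity one} ($\C\s{H}^k$). Here $J_X\xi,J_Y\xi\in\g{h}_{\g{a}\oplus\g{n}}$ for $X,Y\perp Z$, so you can pick $T_X+J_X\xi$, $T_Y+J_Y\xi\in\g{h}$. Since $\g{t}$ is abelian, their bracket has zero $\g{t}$-part and $\g{g}_{2\alpha}$-part a nonzero multiple of $Z$ (using $J_XJ_Y=\pm J_Z$). This contradicts orthogonality to $[(1-\theta)\xi,(1-\theta)J_Z\xi]=(1+\theta)(\tfrac12 Z-[\theta\xi,J_Z\xi])$ for unit $\xi$.
\item \emph{Constant curvature $-1$.} Then $\g{h}_\g{p}^\perp\subseteq\H(1-\theta)\xi$, so $\g{a}\oplus\g{g}_{2\alpha}\subseteq\g{h}_{\g{a}\oplus\g{n}}$. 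Lemma~\ref{lemma:projectionImpliesContained} gives $\g{g}_{2\alpha}\subseteq\g{h}$. This contradicts polarity once you take $(1-\theta)\eta\in\g{h}_\g{p}^\perp$ with $[\xi,\eta]\neq0$.
\item \emph{$\H\s{H}^k$, $k\geq2$} (multiplicity three). Neither trick applies. The paper argues that the orbits are quaternionic, hence totally geodesic by Gray's theorem, and then Alekseevsky--Di Scala forces them to be points. You would need this or an algebraic substitute.
\end{itemize}
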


\begin{proof}
First we show that $\Sigma$ is a space of constant curvature.
From the classification of totally geodesic submanifolds in $\H\s{H}^n$ 
(see Subsection~\ref{sec:fhn-tg-submanifolds}), 
it suffices to show that $\Sigma$ is not isometric to a complex or quaternionic hyperbolic space.

Firstly, let us suppose that $\Sigma = \H\s{H}^k$ for some $k\in \{ 2,\dots,n \}$.
This means that $\Sigma$ is a quaternionic submanifold of $M$.
In addition, if $p\in \Sigma$ is any point, the subspace $T_p(H\cdot p)=\nu_p\Sigma$ is also quaternionic, 
so we see that the orbits of the action $H\curvearrowright M$ are quaternionic.
Because a quaternionic submanifold of a quaternionic Kähler manifold is totally geodesic~\cite[Theorem~5]{Gray}, we deduce that $H\cdot p$ is a totally geodesic submanifold of $M$ for all $p\in M$.
If the action of $H$ is not transitive, 
then any two distinct orbits $H\cdot p$ and $H\cdot q$ are totally geodesic (hence minimal), and~\cite[Corollary~5.2]{AlekseevskyDiScala} 
guarantees that both $H\cdot p$ and $H\cdot q$ consist of one point.
Since $p$ and $q$ are arbitrary, we conclude that the action of $H$ is trivial.

Now, assume that $\Sigma = \C\s{H}^k$ for some $2\leq k \leq n$.
Additionally, if the action of $H$ is not transitive, by lemmas~\ref{lemma:equivalence-B+galpha} and~\ref{lemma:B+galpha}, 
we can assume, up to orbit equivalence, 
that $\g{a}\subseteq\g{h}_\g{p}^\perp$ or $\g{p}_\alpha\cap\g{h}_\g{p}^\perp\neq 0$.

Suppose $\g{a}\subseteq \g{h}_{\g{p}}^\perp$.
Then the restriction of the Jacobi operator $R_{B}$ to $\g{h}_{\g{p}}^\perp$ has the same spectral decomposition as that of the Jacobi operator of a unit vector in $T\C\s{H}^k$.
In other words, there exists a nonzero vector $X\in\g{g}_{2\alpha}$ and a subspace $\g{v}\subseteq \g{g}_\alpha$ such that $\g{h}_{\g{p}}^\perp=\g{a}\oplus (1-\theta)\g{v}\oplus \R (1-\theta)X$.
As the action of $H$ is polar, 
we have for every $\xi\in \g{v}$ that 
$(1+\theta)J_X\xi =-(1+\theta)[\theta\xi,X]=[(1-\theta)\xi,(1-\theta)X]$ is orthogonal to $\g{h}$.
This implies that $\g{v}$ is invariant under $J_X$, 
and so is the orthogonal complement $\g{g}_\alpha\ominus \g{v}$.
As the dimension of $\Sigma$ is at most $2n\leq\dim \g{g}_\alpha$, we have $\g{v}\neq \g{g}_\alpha$.
Choose a nonzero vector $V \in \g{g}_\alpha$ that is orthogonal to $\g{v}$.
We have $V$, $J_X V\in \g{h}_{\g{a}\oplus\g{n}}$, 
so we may choose elements $T$, $T'\in\g{t}$ such that $T+V$, $T'+J_X V\in \g{h}$.
Then
\[
[T,J_X V]+[V,T']+\frac{1}{2}\lvert V \rvert^2 X=[T+V,T'+J_X V]\in\g{h}.
\]
This readily gives a contradiction with the fact that $(1-\theta)X$ is orthogonal to $\g{h}$.

Now assume $\g{h}_{\g{p}}^\perp\cap\g{p}_\alpha\neq 0$.
Thus, we consider a vector $\xi_\alpha \in \g{g}_\alpha$ such that $\lvert \xi_\alpha \rvert = 1$ and
$(1-\theta)\xi_\alpha\in \g{h}_{\g{p}}^\perp$.
The restriction of the Jacobi operator $R_{(1-\theta)\xi_\alpha}$ to $\g{h}_{\g{p}}^\perp$ has the same eigenvalues (with multiplicities) as the Jacobi operator of a unit vector in $T\C\s{H}^k$, and this yields $\g{h}_{\g{p}}^\perp=\R (1-\theta) \xi_\alpha \oplus \R (1-\theta)J_Z \xi_\alpha \oplus \g{w}$, 
where $Z\in \g{g}_{2\alpha}$ is a nonzero vector with $\lvert Z\rvert^2=2$ (thus, $\lvert Z\rvert_{AN}=1$)
and $\g{w}$ is a real subspace of $\g{a}\oplus(\g{p}_\alpha\ominus (1-\theta)\H\xi_\alpha)\oplus\g{p}_{2\alpha}$.
Choose vectors $X,\,Y\in\g{g}_{2\alpha}$ such that $(Y,X,Z)$ is a positively oriented orthonormal basis of $\g{g}_{2\alpha}$.
This implies that $J_X J_Y = J_Z$.
In particular, the vectors $J_X \xi_\alpha$ and $J_Y \xi_\alpha$ are perpendicular to 
$\g{h}_{\g{p}}^\perp$, which means that $J_X \xi_\alpha$, $J_Y \xi_\alpha$ are in $\g{h}_{\g{a}\oplus\g{n}}$.
Choose elements $T_X$, $T_Y\in \g{t}$ satisfying $T_X+J_X \xi_\alpha$, $T_Y + J_Y \xi_\alpha\in \g{h}$.
Then we have 
$[T_X,J_Y\xi_\alpha]+[J_X \xi_\alpha,T_Y]+[J_X\xi_\alpha,J_Y\xi_\alpha]
=[T_X+J_X \xi_\alpha,T_Y+J_Y \xi_\alpha]\in \g{h}$.
Observe that
\begin{equation}\label{eq:bracket-of-js}
\begin{aligned}%
{}[J_X\xi_\alpha,J_Y\xi_\alpha]
&{}=-\frac{2}{\lvert X \rvert^2}[J_X\xi_\alpha,J_X^2J_Y\xi_\alpha]\\
&{}=-\frac{2}{\lvert X \rvert^2}[J_X\xi_\alpha,J_XJ_Z\xi_\alpha] \\
&{}={}\frac{2}{\lvert X \rvert^2}[J_X \xi_\alpha,J_Z J_X \xi_\alpha] 
= \frac{\lvert J_X \xi_\alpha \rvert^2}{\lvert X \rvert^2} Z=\frac{1}{2}Z,
\end{aligned}
\end{equation}
so $[T_X,J_Y\xi_\alpha]+[J_X \xi_\alpha,T_Y]+\tfrac{1}{2}Z\in\g{h}$.
Moreover, as the action of $H$ is polar, the vector
$(1+\theta)\bigl(\tfrac{1}{2}Z-[\theta\xi_\alpha,J_Z\xi_\alpha]\bigr)
=[(1-\theta)\xi_\alpha,(1-\theta)J_Z \xi_\alpha]$ is perpendicular to $\g{h}$.
Thus,
\begin{equation*}
0= \Bigl< [T_X,J_Y\xi_\alpha]+[J_X \xi_\alpha,T_Y]+\frac{1}{2}Z,\,
(1+\theta)\Bigl(\frac{1}{2}Z-[\theta\xi_\alpha,J_Z\xi]\Bigr) \Bigr>
=\frac{1}{4}\lvert Z \rvert^2,
\end{equation*}
contradicting the fact that $Z$ is nonzero.

Therefore, we have shown that $\Sigma$ is a space of constant curvature.
Assume that $\Sigma$ is not totally real in $\mathbb{H}\s{H}^n$.
Since $\Sigma$ is a space of constant curvature, 
it must be strictly contained in a totally geodesic $\mathbb{H}\s{H}^1\subseteq\mathbb{H}\s{H}^n$.

By lemmas~\ref{lemma:equivalence-B+galpha} and~\ref{lemma:B+galpha} we have, up to orbit equivalence, 
that $\g{a}\subseteq\g{h}_\g{p}^\perp$ or $\g{p}_\alpha\cap\g{h}_\g{p}^\perp\neq 0$.

If $\g{a}\subseteq\g{h}_\g{p}^\perp$, 
then $\mathbb{H}\g{a}=\g{a}\oplus\g{p}_{2\alpha}$.
Thus, $\g{h}_\g{p}^\perp\subseteq\g{a}\oplus\g{p}_{2\alpha}$.
Take nonzero $U\in\g{g}_\alpha$ and $Z\in\g{g}_{2\alpha}$.
Then there exists $S$, $T\in\g{t}$ such that $S+U$, $T+J_Z U\in\g{h}$.
By Proposition~\ref{th:criterion} we have
\[
0=\langle[S+U,T+J_Z U],(1-\theta)Z\rangle
=\langle [U,J_Z U],Z\rangle
=\frac{1}{2}\lvert U\rvert^2\lvert Z\rvert^2,
\]
which is a contradiction.

If $\g{p}_\alpha\cap\g{h}_\g{p}^\perp\neq 0$, there is a nonzero vector $\xi_\alpha\in\g{g}_\alpha$ such that
$(1-\theta)\xi_\alpha\in\g{h}_\g{p}^\perp$.
Thus, $\g{h}_\g{p}^\perp\subseteq\mathbb{H}(1-\theta)\xi_\alpha$.
In particular $\g{a}\oplus\g{g}_{2\alpha}$ is orthogonal to $\g{h}_\g{p}^\perp$, so $\g{a}\oplus\g{g}_{2\alpha}\subseteq\g{h}_{\g{a}\oplus\g{n}}$.
By Lemma~\ref{lemma:projectionImpliesContained} we get $\g{g}_{2\alpha}\subseteq\g{h}$.
Since $\g{h}_\g{p}^\perp$ is not totally real, 
there exists $\eta_\alpha\in\g{h}_\g{p}^\perp\subseteq\mathbb{H}(1-\theta)\xi_\alpha$ 
such that $[\xi_\alpha,\eta_\alpha]\neq 0$.
By Proposition~\ref{th:criterion} we get
\begin{align*}
0
&{}=\langle[\xi_\alpha,\eta_\alpha],[(1-\theta)\xi_\alpha,(1-\theta)\eta_\alpha]\rangle
=\langle[\xi_\alpha,\eta_\alpha],
(1+\theta)([\xi_\alpha,\eta_\alpha]-[\theta\xi_\alpha,\eta_\alpha])\rangle
=\lvert[\xi_\alpha,\eta_\alpha]\rvert^2,
\end{align*}
which is also a contradiction.

Therefore, $\Sigma$ cannot be a totally geodesic submanifold of 
$\H\s{H}^1\subseteq\H\s{H}^n$.
Since $\Sigma$ has constant curvature, we conclude that $\Sigma$ is totally real.
\end{proof}

Due to Proposition~\ref{prop:section-constant-curvature}, we know that the sections of the action
of $H$ are totally real submanifolds of constant curvature.
This implies that $\g{h}_{\g{p}}^\perp$ is totally real in $\g{p}$.

\begin{lemma}\label{lemma:vector-g2alpha}
$\g{h}\cap\g{g}_{2\alpha}\neq 0$.
\end{lemma}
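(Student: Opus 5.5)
The approach is a dimension count: I would argue by contradiction, assuming $\g{h}\cap\g{g}_{2\alpha}=0$, and show that $\g{h}\cap\g{n}$ is then too large to fit into the abelian subspaces allowed in $\g{g}_\alpha$. Throughout, $\g{h}\subseteq\g{t}\oplus\g{a}\oplus\g{n}$ with $\g{t}\subseteq\g{k}_0$ abelian, as arranged above, and $\g{h}_\g{p}^\perp$ is totally real by Proposition~\ref{prop:section-constant-curvature}. No conjugation is needed.

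\emph{Step 1: $\g{h}$ is large.} Take $\xi_0+\xi_n\in\g{a}\oplus\g{n}$ with $\xi_0\in\g{a}$, $\xi_n\in\g{n}$, orthogonal to $\g{h}_{\g{a}\oplus\g{n}}$, equivalently to $\g{h}$. Since $\theta\g{n}$ is orthogonal to $\g{h}$, the vector $\xi_0+(1-\theta)\xi_n$ lies in $\g{h}_\g{p}^\perp$. Conversely, every element of $\g{p}=\g{a}\oplus(1-\theta)\g{n}$ orthogonal to $\g{h}$ arises this way. Hence
\[
\dim\bigl((\g{a}\oplus\g{n})\ominus\g{h}_{\g{a}\oplus\g{n}}\bigr)=\dim\g{h}_\g{p}^\perp .
\]
A totally real subspace of $\g{p}\cong\H^n$ has real dimension at most $n$. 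Therefore
\[
\dim\g{h}\geq\dim\g{h}_{\g{a}\oplus\g{n}}\geq 4n-n=3n .
\]

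\emph{Step 2: $\g{h}\cap\g{n}$ is large.} The subspace $\g{h}\cap\g{n}$ is the kernel of the orthogonal projection $\g{h}\to\g{t}\oplus\g{a}$. The abelian subalgebra $\g{t}\subseteq\g{k}_0=\g{sp}(1)\oplus\g{sp}(n-1)$ has dimension at most the rank $n$ of $\g{k}_0$. Hence $\dim(\g{t}\oplus\g{a})\leq n+1$, and so
\[
\dim(\g{h}\cap\g{n})\geq 3n-(n+1)=2n-1 .
\]

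\emph{Step 3: contradiction.} Suppose $\g{h}\cap\g{g}_{2\alpha}=0$. Then $[\g{h}\cap\g{n},\g{h}\cap\g{n}]\subseteq\g{h}\cap[\g{n},\g{n}]=\g{h}\cap\g{g}_{2\alpha}=0$, so $\g{h}\cap\g{n}$ is abelian. The orthogonal projection $\g{h}\cap\g{n}\to\g{g}_\alpha$ has kernel $\g{h}\cap\g{g}_{2\alpha}=0$, so it is injective; let $W$ be its image. Since $[U+Z,U'+Z']=[U,U']$ for $U,U'\in\g{g}_\alpha$ and $Z,Z'\in\g{g}_{2\alpha}$, the subspace $W$ is abelian, that is, totally real in $\g{g}_\alpha\cong\H^{n-1}$. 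Thus
\[
\dim(\g{h}\cap\g{n})=\dim W\leq n-1 .
\]
Combined with Step 2 this gives $2n-1\leq n-1$, which is impossible because $n\geq 2$. Hence $\g{h}\cap\g{g}_{2\alpha}\neq 0$.

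The only delicate point is the identification in Step 1 of the codimension of $\g{h}_{\g{a}\oplus\g{n}}$ with $\dim\g{h}_\g{p}^\perp$. This is where the hypothesis $\g{h}\subseteq\g{t}\oplus\g{a}\oplus\g{n}$, and hence $\g{h}\perp\theta\g{n}$, is essential. The bounds $\dim\g{h}_\g{p}^\perp\leq n$ (from total reality) and $\dim\g{t}\leq n$ (from the rank of $\g{k}_0$) are standard.
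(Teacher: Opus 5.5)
Your proof is correct and follows essentially the same dimension count as the paper: $\dim\g{h}_{\g{a}\oplus\g{n}}\geq 3n$ from total reality, $\dim(\g{h}\cap\g{n})\geq 2n-1$ using $\dim\g{t}\leq n$, and then injectivity of $\g{h}\cap\g{n}\to\g{g}_\alpha$ onto an abelian (hence totally real, dimension $\leq n-1$) subspace. The differences are cosmetic. You verify $\dim\g{h}_{\g{a}\oplus\g{n}}=4n-\dim\g{h}_\g{p}^\perp$ directly from $\g{h}\perp\theta\g{n}$, where the paper quotes the foliation/section dimension count. You also project onto $\g{t}\oplus\g{a}$ in one step, where the paper first projects onto $\g{t}$ and then discards $\g{a}$.
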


\begin{proof}
Assume, on the contrary, that $\g{h}\cap\g{g}_{2\alpha}=0$.

Since $\g{h}_{\g{p}}^\perp$ is totally real we have $\dim\g{h}_{\g{p}}^\perp\leq n$.
In this proof, for $\g{v}\subseteq\g{t}\oplus\g{a}\oplus\g{n}$ 
we denote by $\pi_{\g{v}}\colon\g{h}\to\g{v}$ 
the orthogonal projection of $\g{h}$ onto $\g{v}$. 
Recall that $\g{h}_{\g{v}}=\pi_{\g{v}}(\g{h})$.

Since $H$ induces a foliation on $\H\s{H}^n$ and $\g{h}_{\g{p}}^\perp$ is the tangent space of a section, 
we have $\dim\g{h}_{\g{a}\oplus\g{n}}=\dim\H\s{H}^n-\dim\g{h}_{\g{p}}^\perp\geq 3n$.

Now we consider $\pi_{\g{t}}$ the orthogonal projection onto $\g{t}$.
We have $\ker\pi_{\g{t}}=\g{h}\cap(\g{a}\oplus\g{n})$, which is an ideal of $\g{h}$.
Recall that $\g{t}$ is the Lie algebra of a maximal torus of $K_0\cong\s{Sp}(1)\times\s{Sp}(n-1)$
(see Table~\ref{tab:data-fhn}).
In particular, $\dim\g{t}\leq n$.
Taking this into account yields 
$\dim(\g{h}\cap(\g{a}\oplus\g{n}))=\dim\g{h}-\dim\pi_{\g{t}}(\g{h})\geq 3n-n=2n$.

Note that $\g{h}\cap\g{n}$ is also an ideal of $\g{h}$ 
because $\g{h}\cap(\g{a}\oplus\g{n})$ is and
$[\g{t}\oplus\g{a}\oplus\g{n},\g{n}]\subseteq\g{n}$.
Since $\dim\g{a}=1$, 
we have $\dim(\g{h}\cap\g{n})\geq\dim(\g{h}\cap(\g{a}\oplus\g{n}))-1\geq 2n-1$.

Since $[\g{n},\g{n}]\subseteq\g{g}_{2\alpha}$ and $\g{h}\cap\g{n}$ is a subalgebra of $\g{h}$, 
we obtain $[\g{h}\cap\g{n},\g{h}\cap\g{n}]\subseteq\g{h}\cap\g{n}\cap\g{g}_{2\alpha}=\g{h}\cap\g{g}_{2\alpha}=0$ 
by our assumption.
Thus, $\g{h}\cap\g{n}$ is abelian.
But $[\g{n},\g{g}_{2\alpha}]=0$, so $(\g{h}\cap\g{n})_{\g{g}_\alpha}$ is also abelian.
Since abelian subspaces of $\g{g}_\alpha$ are precisely totally real subspaces, we conclude that $\dim(\g{h}\cap\g{n})_{\g{g}_\alpha}\leq \dim_{\H}\g{g}_\alpha=n-1$.

On the other hand, the kernel of the orthogonal projection of 
$\g{h}\cap\g{n}$ onto $\g{g}_\alpha$ is precisely $\g{h}\cap\g{n}\cap\g{g}_{2\alpha}=\g{h}\cap\g{g}_{2\alpha}=0$.
Thus, $\dim(\g{h}\cap\g{n})_{\g{g}_\alpha}=\dim(\g{h}\cap\g{n})-\dim(\g{h}\cap\g{g}_{2\alpha})\geq 2n-1$.

Therefore, we have obtained $2n-1\leq\dim(\g{h}\cap\g{n})_{\g{g}_\alpha}\leq n-1$, which is a contradiction.
As a consequence, $\g{h}\cap\g{g}_{2\alpha}\neq 0$.
\end{proof}

In view of Lemma~\ref{lemma:vector-g2alpha} we assume from now on that there is a nonzero vector $Z\in\g{h}\cap\g{g}_{2\alpha}$.
We may further suppose that $\lvert Z\rvert^2=2$ (that is, $\lvert Z\rvert_{AN}=1$).
Recall also from Lemma~\ref{lemma:B+galpha} that we may assume, up to orbit equivalence, that there is a vector $\xi_\alpha\in\g{g}_\alpha$ such that $B+(1-\theta)\xi_\alpha\in\g{h}_{\g{p}}^\perp$.

Under these assumptions we will show that

\begin{proposition}\label{prop:section-in-a+p1}
$\g{h}_{\g{p}}^\perp\subseteq\g{a}\oplus\g{p}_\alpha$.
\end{proposition}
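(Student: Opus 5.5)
The plan is to exploit three facts already available at this point. First, by Proposition~\ref{prop:section-constant-curvature} and Lemma~\ref{lemma:singular}-type reasoning, $\g{h}_\g{p}^\perp$ is a totally real Lie triple system whose Jacobi operators restricted to it have the single nonzero eigenvalue $-\frac14|\cdot|^2$, so $\Sigma\cong\R\s{H}^k(2)$. Second, $Z\in\g{h}\cap\g{g}_{2\alpha}$ with $\lvert Z\rvert_{AN}=1$. Third, $B+(1-\theta)\xi_\alpha\in\g{h}_\g{p}^\perp$.

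\textbf{Case $\xi_\alpha=0$.} Here $B\in\g{h}_\g{p}^\perp$. For any $\eta\in\g{h}_\g{p}^\perp\ominus\g{a}$, the constant curvature $-\frac14$ of $\Sigma$ gives $R_B\eta=-\frac14\eta$. By \eqref{eq:jacobi}, the $-\frac14$-eigenspace of $R_B$ is $\g{p}_\alpha$, so $\eta\in\g{p}_\alpha$. Hence $\g{h}_\g{p}^\perp\subseteq\g{a}\oplus\g{p}_\alpha$ directly.

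\textbf{Case $\xi_\alpha\neq0$, first option: transport the $\g{a}$-case.} I would use Lemma~\ref{lemma:equivalence-B+galpha} with $g=\Exp(c\,\xi_\alpha)\in N$ and note that $\Ad(g)Z=Z$, since $Z$ is central in $\g{n}$; this keeps the hypothesis $Z\in\g{h}$. I would then try to choose the conjugation so that the conjugated algebra has $B$ in its normal space. The previous case would then give a normal space $\g{a}\oplus(1-\theta)\g{v}$.

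To pull this back, note that the normal space of $\g{h}$ inside $\g{a}\oplus\g{n}$ is obtained from that of $\Ad(g)\g{h}$ by applying $\Ad(g)^*=e^{-c\,\ad(\theta\xi_\alpha)}$ and projecting modulo $\theta\g{n}\oplus\g{k}_0$. Since $\ad(\theta\xi_\alpha)$ lowers root levels, a vector in $\g{a}\oplus\g{g}_\alpha$ acquires only $\g{a}$-, $\g{k}_0$- and negative-root components. In particular no $\g{g}_{2\alpha}$-component appears. Therefore the normal space of $\g{h}$ would lie in $\g{a}\oplus\g{g}_\alpha$, that is, $\g{h}_\g{p}^\perp\subseteq\g{a}\oplus\g{p}_\alpha$.

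\textbf{Main obstacle and fallback.} Lemma~\ref{lemma:equivalence-B+galpha} as stated moves $B+(1-\theta)\xi_\alpha$ to a vector $(1-\theta)\xi_\alpha\in\g{p}_\alpha$, not to $B$. So the transport argument above may not be directly available, and I would instead argue directly in the configuration $(1-\theta)\xi_\alpha\in\g{h}_\g{p}^\perp$ with $\lvert\xi_\alpha\rvert=1$.

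\begin{enumerate}[(a)]
\item The $-\frac14$-eigenspace of $R_{(1-\theta)\xi_\alpha}$ is $\g{a}\oplus(\g{p}_\alpha\ominus\H(1-\theta)\xi_\alpha)\oplus\g{p}_{2\alpha}$. Hence any $\eta\in\g{h}_\g{p}^\perp\ominus\R(1-\theta)\xi_\alpha$ has the form $bB+(1-\theta)U+(1-\theta)X$ with $U\perp\H\xi_\alpha$ and $X\in\g{g}_{2\alpha}$.
\item From $Z\in\g{h}$ and $\g{h}\perp\g{h}_\g{p}^\perp$ I get $\langle X,Z\rangle=0$.
\item Polarity gives $[\g{h}_\g{p}^\perp,\g{h}_\g{p}^\perp]\perp Z$. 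Using $\ad$-invariance of the Killing form, this becomes $\langle\eta,[Z,\zeta]\rangle=0$ for all $\eta,\zeta\in\g{h}_\g{p}^\perp$, so $\g{h}_\g{p}^\perp\perp J_Z\g{h}_\g{p}^\perp$ in $\g{p}$.
\item Apply (c) with $\zeta=(1-\theta)\xi_\alpha$. Since $J_Z$ maps $\g{a}\leftrightarrow\g{p}_{2\alpha}$ and $\g{p}_\alpha\to\g{p}_\alpha$, this constrains the $\g{a}$- and $\g{p}_{2\alpha}$-components of the elements of $\g{h}_\g{p}^\perp$.
\item Finally I would apply the Lie triple condition $[[(1-\theta)\xi_\alpha,\eta],(1-\theta)\xi_\alpha]\in\g{h}_\g{p}^\perp$ together with total reality of $\g{h}_\g{p}^\perp$. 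The $\g{p}_{2\alpha}$-part of this triple bracket involves $J_X\xi_\alpha$, which is orthogonal to everything allowed in (a). This should force $X=0$.
\end{enumerate}

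I expect step (e) to be the hardest, since it requires tracking $\g{k}_0$-components of $[\theta\xi_\alpha,X]$. If it resists, I would fall back on the same device used in \eqref{eq:bracket-of-js}: take an element of $\g{h}$ projecting onto $J_X\xi_\alpha\in\g{h}_{\g{a}\oplus\g{n}}$, bracket it with $Z$ or with $\xi_\alpha$-type elements, and test the result against $[(1-\theta)\xi_\alpha,(1-\theta)X]$ to obtain $\lvert X\rvert^2=0$.
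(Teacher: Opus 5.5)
Your case $\xi_\alpha=0$ is fine: total reality alone gives $\g{h}_\g{p}^\perp\perp J_XB=\tfrac12(1-\theta)X$. For $\xi_\alpha\neq0$, however, nothing in the proposal actually removes the $\g{p}_{2\alpha}$-component.

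\textbf{Why the local steps fail.}
\begin{enumerate}[(a)]
\setcounter{enumi}{2}
\item This step misreads what polarity gives. $\ad((1+\theta)Z)|_{\g{p}}$ is not a multiple of $J_Z$: it vanishes on $(1-\theta)(\g{g}_{2\alpha}\ominus\R Z)$, whereas $J_Z$ rotates that plane. So your conclusion ``$\g{h}_\g{p}^\perp\perp J_Z\g{h}_\g{p}^\perp$'' is just total reality. What polarity against $Z$ really adds, once compared with total reality, is that the $\g{p}_{2\alpha}$-projection of $\g{h}_\g{p}^\perp$ is $J_Z$-totally real inside that $2$-plane. Hence it is at most one-dimensional. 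That is only the paper's first reduction, not the statement.
\item This step is vacuous. $J_Z(1-\theta)\xi_\alpha\in(1-\theta)\g{J}\xi_\alpha$ only sees $\g{p}_\alpha$-components, and (a) already places those in $\g{p}_\alpha\ominus\H(1-\theta)\xi_\alpha$.
\item This step is also vacuous. Every $\eta$ from (a) lies in the $-\tfrac14$-eigenspace, so $[[(1-\theta)\xi_\alpha,\eta],(1-\theta)\xi_\alpha]=R_{(1-\theta)\xi_\alpha}\eta=-\tfrac12\eta$. The term $J_X\xi_\alpha$ from the inner bracket disappears after the second bracket, and the Lie triple condition gives nothing.
\end{enumerate}
Your last device does not close the remaining case either. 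First, $(1-\theta)X$ by itself is not a normal vector. Second, testing $h=T_h+J_X\xi_\alpha\in\g{h}$ against $[(1-\theta)\xi_\alpha,\,bB+(1-\theta)(U+X)]$ only yields $\lvert J_X\xi_\alpha\rvert^2=2\langle[T_h,\xi_\alpha],U\rangle$. There the unknown $\g{t}$-component absorbs $\lvert X\rvert^2$. The same thing happens in the paper: the pointwise analysis at $o$ (Lemmas~\ref{lemma:spperp} and~\ref{lemma:bracket-xialpha-S}) produces $S\in\g{t}$ with $[\xi_\alpha,S]=\tfrac{\lvert\xi_\alpha\rvert^2}{2}J_{\eta_{2\alpha}}\xi_\alpha\neq0$, not a contradiction.

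\textbf{The missing global idea.} The paper uses that all isotropy groups are conjugate, not only the polarity conditions at $o$.
\begin{enumerate}[(i)]
\item Triviality of the slice representation at $o$ gives $[\g{h}\cap\g{t},\g{h}_\g{p}^\perp]=0$, hence $\ad(\xi_\alpha)(\g{h}\cap\g{t})=0$.
\item So $g=\Exp(-2\xi_\alpha/\lvert\xi_\alpha\rvert^2)$ satisfies $\Ad(g)(\g{h}\cap\g{t})=\g{h}\cap\g{t}$.
\item But $\Ad(g)$ maps $S+J_{\eta_{2\alpha}}\xi_\alpha+\tfrac12\eta_{2\alpha}\in\g{h}$ to $S\notin\g{h}\cap\g{t}$.
\item Hence $\Ad(g)\g{h}\cap\g{t}\supsetneq\g{h}\cap\g{t}$, so two isotropy algebras have different dimensions.
\end{enumerate}
Your plan has no step of this kind.

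\textbf{The change of configuration.} Switching to $(1-\theta)\xi_\alpha\in\g{h}_\g{p}^\perp$ is not harmless. The proposition concerns the configuration $B+(1-\theta)\xi_\alpha\in\g{h}_\g{p}^\perp$, which is how it is used afterwards. Conjugation by $N$ does not automatically preserve the conclusion, because it can trade $\g{g}_{2\alpha}$-components for $\g{t}$-components. The identity $\Ad(g)(S+J_{\eta_{2\alpha}}\xi_\alpha+\tfrac12\eta_{2\alpha})=S$ shows exactly this. For the same reason your transport step is unjustified: $\Ad(g)^*$ of a vector in $\g{g}_\alpha$ acquires a $\g{k}_0$-component. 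That component cannot be projected away, since $\g{h}$ has nonzero projection onto $\g{t}$.
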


In order to prove Proposition~\ref{prop:section-in-a+p1} we consider $(\g{h}_{\g{p}}^\perp)_{\g{p}_{2\alpha}}$, the orthogonal projection of $\g{h}_{\g{p}}^\perp$ onto $\g{p}_{2\alpha}$.
It suffices to show that $\dim(\g{h}_{\g{p}}^\perp)_{\g{p}_{2\alpha}}=0$.
We carry this out in two steps.  
The first of those is relatively easy to obtain:

\begin{lemma}
$\dim(\g{h}_{\g{p}}^\perp)_{\g{p}_{2\alpha}}\leq 1$.
\end{lemma}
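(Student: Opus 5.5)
We argue by contradiction, assuming $\dim(\g{h}_{\g{p}}^\perp)_{\g{p}_{2\alpha}}\geq 2$. The main input is that $Z\in\g{h}\cap\g{g}_{2\alpha}$, so $(1-\theta)Z$ is orthogonal to $\g{h}_{\g{p}}^\perp$ via $\langle \xi,(1-\theta)Z\rangle=\langle \xi, Z\rangle$ for $\xi\in\g{p}$. Hence the projection $(\g{h}_{\g{p}}^\perp)_{\g{p}_{2\alpha}}$ lies in the $2$-dimensional subspace $(1-\theta)(\g{g}_{2\alpha}\ominus\R Z)$. If it had dimension $2$, it would equal $(1-\theta)(\g{g}_{2\alpha}\ominus\R Z)=(1-\theta)\operatorname{span}\{X,Y\}$, where $(X,Y,Z)$ is an orthonormal frame of $\g{g}_{2\alpha}$ up to scaling with $J_XJ_Y=J_Z$ (after orientation adjustments). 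So $\g{h}_{\g{p}}^\perp$ contains vectors $\xi_1=(1-\theta)X+\eta_1$ and $\xi_2=(1-\theta)Y+\eta_2$ with $\eta_i\in\g{a}\oplus\g{p}_\alpha$.

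The plan is to use the polarity condition of Proposition~\ref{th:criterion}, namely that $[\xi_1,\xi_2]$ is orthogonal to $\g{h}$, and in particular to $Z$. We compute the $\g{g}_{2\alpha}$-component of $(1+\theta)^{-1}$-type pieces of $[\xi_1,\xi_2]$, i.e. $\langle[\xi_1,\xi_2],Z\rangle$. The term $[(1-\theta)X,(1-\theta)Y]=(1+\theta)([X,Y]-[\theta X,Y])$ lies in $\g{k}_0$ because $[X,Y]=0$ and $\langle X,Y\rangle=0$, so it pairs to zero with $Z$. The cross terms $[(1-\theta)X,\eta_2]$ with $\eta_2\in\g{a}\oplus\g{p}_\alpha$ land in $\g{k}_\alpha\oplus\g{k}_{2\alpha}\oplus\ldots$; only the $\g{a}$-part of $\eta_2$ and the bracket of $\g{p}_\alpha$-parts contribute to the $\g{k}_{2\alpha}$-direction. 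The most important contribution comes from $[\eta_1,\eta_2]$ restricted to $\g{p}_\alpha$-parts, which gives $(1+\theta)[U_1,U_2]$ plus $\g{k}_0$ terms, where $\eta_i=(1-\theta)U_i+\ldots$. The $\g{a}$-parts $b_iB$ contribute multiples of $(1+\theta)X$ and $(1+\theta)Y$, which are orthogonal to $Z$. We therefore get $\langle [U_1,U_2],Z\rangle=\langle J_ZU_1,U_2\rangle=0$.

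It remains to use total reality of $\g{h}_{\g{p}}^\perp$ (from Proposition~\ref{prop:section-constant-curvature}), which we view through the quaternionic structure on $\g{p}$: $J_W$ for $W\in\g{g}_{2\alpha}$ maps $\g{h}_{\g{p}}^\perp$ to its orthogonal complement. Apply this to the pair $\xi_1,\xi_2$ with $J_Z$, using $J_Z(1-\theta)X=\pm(1-\theta)Y$-type identities coming from $J_{X_i}X_{i+1}=-X_{i+2}$. This yields $\langle J_Z\xi_1,\xi_2\rangle=\pm\lvert X\rvert\lvert Y\rvert\cdot c+\langle J_Z\eta_1,\eta_2\rangle=0$, with $c\neq 0$ a normalization constant. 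Here $J_Z$ maps $\g{a}$ into $\g{p}_{2\alpha}$ (in the direction of $Z$, orthogonal to $X,Y$), so the $\g{a}$-parts drop out, and $\langle J_Z\eta_1,\eta_2\rangle=\langle J_Z(1-\theta)U_1,(1-\theta)U_2\rangle$, which is a fixed multiple of $\langle J_ZU_1,U_2\rangle=0$ by the previous step. We conclude $\langle J_Z(1-\theta)X,(1-\theta)Y\rangle=0$, contradicting the nonzero value from the quaternion relations.

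The main obstacle is the bookkeeping of normalizations and signs: we must verify that the $\g{p}_{2\alpha}$-contribution to $\langle J_Z\xi_1,\xi_2\rangle$ is genuinely nonzero, and that the polarity computation isolates exactly $\langle J_ZU_1,U_2\rangle$ without stray $\g{a}$-terms. If the total-reality route gives trouble, the fallback is to test polarity directly against $Z\in\g{h}$ using $\xi_1$, $\xi_2$ and $B+(1-\theta)\xi_\alpha$.
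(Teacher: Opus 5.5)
Your argument is correct and is essentially the paper's proof. Total reality of $\g{h}_{\g{p}}^\perp$, tested with $J_Z$ on the two vectors whose $\g{p}_{2\alpha}$-parts span $(1-\theta)(\g{g}_{2\alpha}\ominus\R Z)$, gives a nonzero quantity unless $\langle[U_1,U_2],Z\rangle\neq 0$, while polarity tested against $Z\in\g{h}$ forces $\langle[U_1,U_2],Z\rangle=0$. The only difference is that the paper first subtracts multiples of $B+(1-\theta)\xi_\alpha$ so that these vectors have no $\g{a}$-component, whereas you carry the $\g{a}$-parts along and correctly check that they drop out of both pairings; also $\langle\xi,(1-\theta)Z\rangle=2\langle\xi,Z\rangle$, a harmless factor.
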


\begin{proof}
It follows from Lemma~\ref{lemma:vector-g2alpha} that $\dim(\g{h}_{\g{p}}^\perp)_{\g{p}_{2\alpha}}\leq 2$.
Thus we have to prove that the case $\dim(\g{h}_{\g{p}}^\perp)_{\g{p}_{2\alpha}}= 2$ is not possible.

Assume $\dim(\g{h}_{\g{p}}^\perp)_{\g{p}_{2\alpha}}= 2$.
Then we can write
\[
\g{h}_{\g{p}}^\perp=\R(B+(1-\theta)\xi_\alpha)\oplus(1-\theta)\g{w}\oplus\R(1-\theta)(\eta_\alpha+\eta_{2\alpha})
\oplus\R(1-\theta)(\phi_\alpha+\phi_{2\alpha}),
\]
where $\g{w}\subseteq\g{g}_\alpha$, $\eta_\alpha$, $\phi_\alpha\in\g{g}_\alpha$, $\eta_{2\alpha}$, $\phi_{2\alpha}\in\g{g}_{2\alpha}$, and we can assume that $(\eta_{2\alpha},\phi_{2\alpha},Z)$ is a positively oriented orthonormal basis of $\g{g}_{2\alpha}$ (with respect to the inner product of $\g{a}\oplus\g{n}$).

Since $\g{h}_{\g{p}}^\perp$ is totally real,
\begin{align*}
0
&{}=\langle J_Z(1-\theta)(\eta_\alpha+\eta_{2\alpha}),(1-\theta)(\phi_\alpha+\phi_{2\alpha})\rangle\\
&{}=\langle (1-\theta)(J_Z \eta_\alpha-\phi_{2\alpha}),(1-\theta)(\phi_\alpha+\phi_{2\alpha})\rangle\\
&{}=2\langle J_Z\eta_\alpha,\phi_\alpha\rangle-2\langle\phi_{2\alpha},\phi_{2\alpha}\rangle
=2\langle [\eta_\alpha,\phi_\alpha],Z\rangle-4,
\end{align*}
and we get $\langle [\eta_\alpha,\phi_\alpha],Z\rangle=2$.

Since $\g{h}$ is orthogonal to $[\g{h}_{\g{p}}^\perp,\g{h}_{\g{p}}^\perp]$ by Proposition~\ref{th:criterion},
we also have
\begin{align*}
0
&{}=\langle Z,[(1-\theta)(\eta_\alpha+\eta_{2\alpha}),(1-\theta)(\phi_\alpha+\phi_{2\alpha})]\rangle\\
&{}=\bigl\langle Z,(1+\theta)\bigl([\eta_\alpha,\phi_\alpha]-[\theta\eta_\alpha,\phi_\alpha]
-[\theta\eta_\alpha,\phi_{2\alpha}]-[\theta\eta_{2\alpha},\phi_\alpha]
-[\theta\eta_{2\alpha},\phi_{2\alpha}]\bigr)\bigr\rangle\\
&{}=\langle Z,[\eta_\alpha,\phi_\alpha]\rangle=2,
\end{align*}
which gives a contradiction and finishes the proof.
\end{proof}

The second step in the proof of Proposition~\ref{prop:section-in-a+p1}
is to show that the case $\dim(\g{h}_{\g{p}}^\perp)_{\g{p}_{2\alpha}}= 1$ is not possible.
To do this we will need the following lemmas:

\begin{lemma}\label{lemma:spperp}
Assume $\dim(\g{h}_\g{p}^\perp)_{\g{p}_{2\alpha}}=1$.
We can write
\[
\g{h}_{\g{p}}^\perp
=\R(B+(1-\theta)\xi_\alpha)\oplus(1-\theta)\g{w}\oplus\R(1-\theta)(\eta_\alpha+\eta_{2\alpha}),
\]
where $\g{w}$ is a totally real subspace of $\g{g}_\alpha$, $\xi_\alpha$ and $\eta_\alpha$ are linearly independent vectors of $\g{g}_\alpha$, $\eta_{2\alpha}\in\g{g}_{2\alpha}$ with $\lvert\eta_{2\alpha}\rvert^2=2$, $\H\xi_\alpha+\H\eta_\alpha$ is orthogonal to $\H\g{w}$, 
$2[\xi_\alpha,\eta_\alpha]=-\eta_{2\alpha}$,
and
\[
\eta_\alpha
=\frac{\langle\xi_\alpha,\eta_\alpha\rangle}{\lvert\xi_\alpha\rvert^2}\xi_\alpha
-\frac{1}{\lvert\xi_\alpha\rvert^2}J_{\eta_{2\alpha}}\xi_\alpha.
\]
\end{lemma}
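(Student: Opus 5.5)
The plan is to build the decomposition directly from $\g{h}_{\g{p}}^\perp$, and then read off each identity from one of two sources: the fact that $\g{h}_{\g{p}}^\perp$ is totally real (Proposition~\ref{prop:section-constant-curvature}), or the polarity criterion, Proposition~\ref{th:criterion}. Two facts are already available: $B+(1-\theta)\xi_\alpha\in\g{h}_\g{p}^\perp$ from Lemma~\ref{lemma:B+galpha}, and $\dim(\g{h}_\g{p}^\perp)_{\g{p}_{2\alpha}}=1$ by hypothesis.

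\emph{Step 1: the decomposition.} Let $(1-\theta)\g{w}=\g{h}_\g{p}^\perp\cap\g{p}_\alpha$. The projection of $\g{h}_\g{p}^\perp$ onto $\g{a}\oplus\g{p}_{2\alpha}$ is at most $2$-dimensional and its kernel is $(1-\theta)\g{w}$. We choose a vector of $\g{h}_\g{p}^\perp$ with nonzero $\g{p}_{2\alpha}$-component and subtract a multiple of $B+(1-\theta)\xi_\alpha$ so that it has no $\g{a}$-component. We then replace every vector by its component orthogonal to $(1-\theta)\g{w}$ inside $\g{h}_\g{p}^\perp$. This gives the displayed splitting with the normalization $\lvert\eta_{2\alpha}\rvert^2=2$, and it also yields $\langle\g{w},\xi_\alpha\rangle=\langle\g{w},\eta_\alpha\rangle=0$. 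Because $Z\in\g{h}$ is orthogonal to $(1-\theta)(\eta_\alpha+\eta_{2\alpha})$, we also get $\eta_{2\alpha}\perp Z$.

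\emph{Step 2: the bracket identity.} We use total reality between $B+(1-\theta)\xi_\alpha$ and $(1-\theta)(\eta_\alpha+\eta_{2\alpha})$. Recall that $J_XB=\tfrac12(1-\theta)X$ and that $J_X$ preserves $\g{p}_\alpha$. Hence for every $X\in\g{g}_{2\alpha}$,
\[
0=\langle J_X(B+(1-\theta)\xi_\alpha),(1-\theta)(\eta_\alpha+\eta_{2\alpha})\rangle=\langle X,\eta_{2\alpha}\rangle+2\langle[\xi_\alpha,\eta_\alpha],X\rangle ,
\]
so $2[\xi_\alpha,\eta_\alpha]=-\eta_{2\alpha}$. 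Since $\eta_{2\alpha}\neq0$, this forces $\xi_\alpha\neq0$. Next, write the $\g{J}\xi_\alpha$-component of $\eta_\alpha$ as $J_Y\xi_\alpha$. By $\langle J_X\xi_\alpha,J_Y\xi_\alpha\rangle=\tfrac12\lvert\xi_\alpha\rvert^2\langle X,Y\rangle$, the identity $\langle J_X\xi_\alpha,\eta_\alpha\rangle=-\tfrac12\langle X,\eta_{2\alpha}\rangle$ becomes
\[
\tfrac12\lvert\xi_\alpha\rvert^2\langle X,Y\rangle=-\tfrac12\langle X,\eta_{2\alpha}\rangle\quad\text{for all }X,
\]
so $Y=-\eta_{2\alpha}/\lvert\xi_\alpha\rvert^2$. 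Therefore $\eta_\alpha$ equals the claimed expression plus a component $\zeta\in\g{g}_\alpha\ominus\H\xi_\alpha$. Linear independence of $\xi_\alpha$ and $\eta_\alpha$ follows because the $\g{J}\xi_\alpha$-component is nonzero.

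\emph{Step 3: orthogonality to $\H\g{w}$.} Total reality between $(1-\theta)\g{w}$ and each of the other two generators gives $\langle J_X w,\xi_\alpha\rangle=0$ and $\langle J_Xw,\eta_\alpha\rangle=0$; the $\g{a}$- and $\g{p}_{2\alpha}$-parts drop out, since $J_X$ maps $\g{p}_\alpha$ into $\g{p}_\alpha$. Together with the orthogonality from Step 1 and the skew-symmetry of the $J_X$, this gives $\H\g{w}\perp\H\xi_\alpha+\H\eta_\alpha$. Finally, $\g{w}$ is totally real because $\g{h}_\g{p}^\perp$ is.

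\emph{Step 4: the main obstacle, showing $\zeta=0$.} Total reality alone does not see $\zeta$, so the second half of Proposition~\ref{th:criterion} must be used. Since $\zeta\perp\H\xi_\alpha$, $\zeta\perp\H\g{w}$ (by Step 3), and total reality of $\g{h}_\g{p}^\perp$ gives $\g{J}\zeta\perp\H\g{w}$, vectors such as $J_Z\zeta$ and $J_X\zeta$ (with $X\perp\eta_{2\alpha}$) are orthogonal to $\g{h}_\g{p}^\perp$ up to corrections along $\g{J}\xi_\alpha$. Hence they lie in $\g{h}_{\g{a}\oplus\g{n}}$, and there are $T,T'\in\g{t}$ with $T+J_Z\zeta,\ T'+\zeta'\in\g{h}$. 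As in Proposition~\ref{prop:section-constant-curvature}, I would bracket such elements with each other and with $Z\in\g{h}$. I would then pair the result against $[(1-\theta)(\eta_\alpha+\eta_{2\alpha}),\,B+(1-\theta)\xi_\alpha]$ and $[(1-\theta)(\eta_\alpha+\eta_{2\alpha}),(1-\theta)\xi_\alpha]$, both of which lie in $[\g{h}_\g{p}^\perp,\g{h}_\g{p}^\perp]$ and are therefore orthogonal to $\g{h}$. The goal is an identity forcing $\lvert\zeta\rvert^2=0$, in the spirit of computation~\eqref{eq:bracket-of-js}. If that fails, the fallback is to show that $\zeta$ can be absorbed: after conjugating by a suitable element of $N$ or $K_0$ fixing the other data, a nonzero $\zeta$ would either enlarge $\g{w}$ or contradict $\dim\g{h}_\g{p}^\perp\le n$.
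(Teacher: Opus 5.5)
Your Steps 1--3 are correct and essentially match the paper's use of total reality. Your Step~2 even sharpens it, since it shows
\[
\eta_\alpha=\frac{\langle\xi_\alpha,\eta_\alpha\rangle}{\lvert\xi_\alpha\rvert^2}\xi_\alpha-\frac{1}{\lvert\xi_\alpha\rvert^2}J_{\eta_{2\alpha}}\xi_\alpha+\zeta,\qquad\zeta\in\g{g}_\alpha\ominus\H\xi_\alpha .
\]
But the last formula of the lemma is exactly the assertion $\zeta=0$, and Step~4 does not prove it. As written, it contains an invalid ingredient and a fallback that cannot work.

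First, $(1-\theta)\xi_\alpha$ is not in $\g{h}_\g{p}^\perp$: otherwise $\xi_\alpha\in\g{w}$, while $0\neq\xi_\alpha\perp\g{w}$. So $[(1-\theta)(\eta_\alpha+\eta_{2\alpha}),(1-\theta)\xi_\alpha]$ is not in $[\g{h}_\g{p}^\perp,\g{h}_\g{p}^\perp]$ and yields no relation. Second, the dimension fallback gives nothing. If $\zeta\neq0$, then $\H\xi_\alpha+\H\eta_\alpha$ has quaternionic dimension $2$ and is orthogonal to $\H\g{w}$, so $\dim\g{h}_\g{p}^\perp=2+\dim\g{w}\le n-1$, with no contradiction. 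Only for $n=2$ is $\zeta=0$ automatic.

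More fundamentally, every element of $\g{h}$ you can write down carries an unknown $\g{t}$-component, so brackets produce terms $[T,\cdot\,]\in\g{g}_\alpha$. In~\eqref{eq:bracket-of-js} these died because the polarity vector had no $\g{g}_\alpha$-part. The natural polarity vector here,~\eqref{eq:bracket-spperp}, has $\g{g}_\alpha$-part $\frac12\eta_\alpha+J_{\eta_{2\alpha}}\xi_\alpha$, which need not vanish. So these terms survive, and your plan has no mechanism to eliminate them.

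The missing idea is to produce two independent equations for the same unknown $\g{t}$-quantity and eliminate it. For $U\in\g{g}_\alpha\ominus\g{w}$, the vector $-\langle U,\xi_\alpha\rangle B+U-\frac12\langle U,\eta_\alpha\rangle\eta_{2\alpha}$ is orthogonal to all three generators of $\g{h}_\g{p}^\perp$. Hence adding a suitable $T_U\in\g{t}$ gives an element of $\g{h}$.

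Bracket the elements for $U=\xi_\alpha$ and $U=\eta_\alpha$ and pair the result with $(1-\theta)(\eta_\alpha+\eta_{2\alpha})$. This gives $\langle[T_{\eta_\alpha},\xi_\alpha],\eta_\alpha\rangle=\frac12\lvert\xi_\alpha\wedge\eta_\alpha\rvert^2-1$. Pairing the element for $U=\eta_\alpha$ directly with~\eqref{eq:bracket-spperp}, using polarity, gives instead $-2\langle[T_{\eta_\alpha},\xi_\alpha],\eta_\alpha\rangle-1=0$. Hence $\lvert\xi_\alpha\wedge\eta_\alpha\rvert^2=1$.

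The paper then concludes by a Cauchy--Schwarz equality argument. With your Step~2 you can finish in one line: $\lvert J_{\eta_{2\alpha}}\xi_\alpha\rvert^2=\lvert\xi_\alpha\rvert^2$ gives $\lvert\xi_\alpha\wedge\eta_\alpha\rvert^2=1+\lvert\xi_\alpha\rvert^2\lvert\zeta\rvert^2$, so $\zeta=0$.
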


\begin{proof}
By Lemma~\ref{lemma:B+galpha} and the assumption $\dim(\g{h}_{\g{p}}^\perp)_{\g{p}_{2\alpha}}= 1$, it is clear that we can write
$\g{h}_{\g{p}}^\perp
=\R(B+(1-\theta)\xi_\alpha)\oplus(1-\theta)\g{w}\oplus\R(1-\theta)(\eta_\alpha+\eta_{2\alpha})$,
where $\g{w}\subseteq\g{g}_\alpha$, $\xi_\alpha$, $\eta_\alpha\in\g{g}_\alpha$ are orthogonal to $\g{w}$, $\eta_{2\alpha}\in\g{g}_{2\alpha}$, and $\lvert\eta_{2\alpha}\rvert^2=2$ (thus, $\lvert\eta_{2\alpha}\rvert_{AN}^2=1$).

Since $\g{h}_{\g{p}}^\perp$ is totally real, it is clear that so is $\g{w}$.
Moreover, for each $U\in\g{w}$ and $X\in\g{g}_{2\alpha}$ we have
\[
0
=\langle J_X(B+(1-\theta)\xi_\alpha),(1-\theta)U\rangle
=\Bigl\langle (1-\theta)\Bigl(\frac{1}{2}X+J_X\xi_\alpha\Bigr),(1-\theta)U\Bigr\rangle
=2\langle J_X\xi_\alpha,U\rangle,
\]
which implies $\H\xi_\alpha$ is orthogonal to $\g{w}$.
Similarly, since $J_X\eta_{2\alpha}\in\g{a}\oplus\g{g}_{2\alpha}$ we get
\[
0
=\langle J_X(1-\theta)(\eta_\alpha+\eta_{2\alpha}),(1-\theta)U\rangle
=2\langle J_X\eta_\alpha,U\rangle,
\]
and we also obtain that $\H\eta_\alpha$ is orthogonal to $\g{w}$.
Finally,
\begin{align*}
0
&{}=\langle J_{X}(B+(1-\theta)\xi_\alpha),(1-\theta)(\eta_\alpha+\eta_{2\alpha})\rangle\\
&{}=\Bigl\langle (1-\theta)\Bigl(\frac{1}{2}X+J_{X}\xi_\alpha\Bigr),
(1-\theta)(\eta_\alpha+\eta_{2\alpha})\Bigr\rangle\\
&{}=2\langle J_{X}\xi_\alpha,\eta_\alpha\rangle+\langle X,\eta_{2\alpha}\rangle
=\langle 2[\xi_\alpha,\eta_\alpha]+\eta_{2\alpha},X\rangle,
\end{align*}
so $2[\xi_\alpha,\eta_\alpha]=-\eta_{2\alpha}$.
In particular, $\xi_\alpha$ and $\eta_\alpha$ are linearly independent vectors.

Let $U\in\g{g}_\alpha\ominus\g{w}$.
The vector $-\langle U,\xi_\alpha\rangle B+U-\frac{1}{2}\langle U,\eta_\alpha\rangle\eta_{2\alpha}$
is orthogonal to $\g{h}_{\g{p}}^\perp$.
Thus, for each $U\in\g{g}_\alpha\ominus\g{w}$ there exists $T_U\in\g{t}$ such that
\begin{equation}\label{eq:tangent-vectors}
T_U-\langle U,\xi_\alpha\rangle B+U-\frac{1}{2}\langle U,\eta_\alpha\rangle\eta_{2\alpha}\in\g{h}.
\end{equation}
Taking $U=\xi_\alpha$ and $U=\eta_\alpha$ in the previous expression, and then bracketing the result gives
\begin{align*}
&
\Bigl[T_{\xi_\alpha}-\lvert\xi_\alpha\rvert^2 B+\xi_\alpha
-\frac{1}{2}\langle \xi_\alpha,\eta_\alpha\rangle\eta_{2\alpha},\,
T_{\eta_\alpha}-\langle \eta_\alpha,\xi_\alpha\rangle B+\eta_\alpha
-\frac{1}{2}\lvert\eta_\alpha\rvert^2\eta_{2\alpha}\Bigr]\\
&\qquad{}=\frac{1}{2}\langle\xi_\alpha,\eta_\alpha\rangle\xi_\alpha
-\frac{1}{2}\lvert\xi_\alpha\rvert^2\eta_\alpha
+[T_{\xi_\alpha},\eta_\alpha]-[T_{\eta_\alpha},\xi_\alpha]\\
&\qquad\phantom{{}=}{}+\frac{1}{2}\bigl(\lvert\xi_\alpha\wedge\eta_\alpha\rvert^2-1\bigr)\eta_{2\alpha}
-\frac{1}{2}\lvert\eta_\alpha\rvert^2[T_{\xi_\alpha},\eta_{2\alpha}]
+\frac{1}{2}\langle\xi_\alpha,\eta_\alpha\rangle[T_{\eta_\alpha},\eta_{2\alpha}],
\end{align*}
which is an element of $\g{h}$.
Hence, taking inner product with the vectors $B+(1-\theta)\xi_\alpha$ and $(1-\theta)(\eta_\alpha+\eta_{2\alpha})\in\g{h}_{\g{p}}^\perp$ we deduce that
\begin{align}\label{eq:Txieta}
\langle[T_{\xi_\alpha},\eta_\alpha],\xi_\alpha\rangle
&{}=0,&
\langle[T_{\eta_\alpha},\xi_\alpha],\eta_\alpha\rangle
&{}=\frac{1}{2}\lvert\xi_\alpha\wedge\eta_\alpha\rvert^2-1.
\end{align}

We also calculate
\begin{equation}\label{eq:bracket-spperp}
\begin{aligned}
&{}[B+(1-\theta)\xi_\alpha,(1-\theta)(\eta_\alpha+\eta_{2\alpha})]\\
&\qquad{}=(1+\theta)\Bigl(\frac{1}{2}\eta_\alpha+\eta_{2\alpha}
+[\xi_\alpha,\eta_\alpha]-[\theta\xi_\alpha,\eta_\alpha]
-[\theta\xi_\alpha,\eta_{2\alpha}]\Bigr)\\
&\qquad{}=(1+\theta)\Bigl(-[\theta\xi_\alpha,\eta_\alpha]
+\frac{1}{2}\eta_\alpha+J_{\eta_{2\alpha}}\xi_\alpha+\frac{1}{2}\eta_{2\alpha}\Bigr).
\end{aligned}
\end{equation}
Since $\g{h}$ is orthogonal to $[\g{h}_{\g{p}}^\perp,\g{h}_{\g{p}}^\perp]$ by Proposition~\ref{th:criterion}, taking inner product with the element $T_{\eta_\alpha}-\langle\xi_\alpha,\eta_\alpha\rangle B+\eta_\alpha-\frac{1}{2}\lvert\eta_\alpha\rvert^2\eta_{2\alpha}\in\g{h}$,
and using $\langle J_{\eta_{2\alpha}}\xi_\alpha,\eta_\alpha\rangle=\langle[\xi_\alpha,\eta_\alpha],\eta_{2\alpha}\rangle=-1$ and~\eqref{eq:Txieta}, gives
\begin{align*}
0&{}=-\langle T_{\eta_\alpha},(1+\theta)[\theta\xi_\alpha,\eta_\alpha]\rangle
+\frac{1}{2}\lvert\eta_\alpha\rvert^2
+\langle J_{\eta_{2\alpha}}\xi_\alpha,\eta_\alpha\rangle
-\frac{1}{2}\lvert\eta_{\alpha}\rvert^2\\
&{}=-2\langle[T_{\eta_{\alpha}},\xi_\alpha],\eta_\alpha\rangle-1
=-\lvert\xi_\alpha\wedge\eta_\alpha\rvert^2+1.
\end{align*}
Using these results and 
$\lvert J_{\eta_{2\alpha}}\xi_\alpha\rvert^2 =\frac{1}{2}\lvert\eta_{2\alpha}\rvert^2\lvert\xi_\alpha\rvert^2 =\lvert\xi_\alpha\rvert^2$ we calculate
\begin{align*}
&\lvert\eta_\alpha\wedge(\langle\xi_\alpha,\eta_\alpha\rangle\xi_\alpha-J_{\eta_{2\alpha}}\xi_\alpha)\lvert^2\\
&\quad{}=\lvert\eta_\alpha\rvert^2
\lvert\langle\xi_\alpha,\eta_\alpha\rangle\xi_\alpha-J_{\eta_{2\alpha}}\xi_\alpha\rvert^2
-\langle\eta_\alpha,\langle\xi_\alpha,\eta_\alpha\rangle\xi_\alpha-J_{\eta_{2\alpha}}\xi_\alpha\rangle^2\\
&\quad{}=\lvert\eta_\alpha\rvert^2
(\langle\xi_\alpha,\eta_\alpha\rangle^2\lvert\xi_\alpha\rvert^2+\lvert J_{\eta_{2\alpha}}\xi_\alpha\rvert^2)
-(\langle\xi_\alpha,\eta_\alpha\rangle^2-\langle J_{\eta_{2\alpha}}\xi_\alpha,\eta_\alpha\rangle)^2\\
&\quad{}=\lvert\eta_\alpha\rvert^2
(\langle\xi_\alpha,\eta_\alpha\rangle^2\lvert\xi_\alpha\rvert^2+\lvert \xi_\alpha\rvert^2)
-(\langle\xi_\alpha,\eta_\alpha\rangle^2+1)^2
=(\lvert\xi_\alpha\wedge\eta_\alpha\rvert^2-1)(\langle\xi_\alpha,\eta_\alpha\rangle^2+1)=0.
\end{align*}
This implies that $\eta_\alpha$ and $\langle\xi_\alpha,\eta_\alpha\rangle\xi_\alpha-J_{\eta_{2\alpha}}\xi_\alpha$ are collinear, so we may write
\begin{equation*}
\eta_\alpha=\mu(\langle\xi_\alpha,\eta_\alpha\rangle\xi_\alpha-J_{\eta_{2\alpha}}\xi_\alpha), \quad \mu\in \R.
\end{equation*}
Taking inner product with $\eta_\alpha$ and recalling that
$\lvert\xi_\alpha\wedge\eta_\alpha\rvert=1=-\langle J_{\eta_{2\alpha}}\xi_\alpha,\eta_\alpha\rangle$,
the formula for $\eta_\alpha$ readily follows.\qedhere
\end{proof}

Setting $U=J_{\eta_{2\alpha}}\xi_\alpha$ in~\eqref{eq:tangent-vectors} we have that
\[
T_{J_{\eta_{2\alpha}}\xi_\alpha}+J_{\eta_{2\alpha}}\xi_\alpha
-\frac{1}{2}\langle J_{\eta_{2\alpha}}\xi_\alpha,\eta_\alpha\rangle\eta_{2\alpha}
=S+J_{\eta_{2\alpha}}\xi_\alpha
+\frac{1}{2}\eta_{2\alpha}\in\g{h},
\]
where we have defined $S=T_{J_{\eta_{2\alpha}}\xi_\alpha}$ for simplicity, and used Lemma~\ref{lemma:spperp} for the last summand.

\begin{lemma}\label{lemma:bracket-xialpha-S}
Under the above hypotheses, we have
\[
[\xi_\alpha,S]=\frac{\lvert\xi_\alpha\rvert^2}{2}J_{\eta_{2\alpha}}\xi_\alpha.
\]
\end{lemma}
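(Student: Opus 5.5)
The plan is to determine the vector $[\xi_\alpha,S]=-\ad(S)\xi_\alpha$ componentwise. Since $S\in\g{t}\subseteq\g{k}_0$, the operator $\ad(S)$ preserves $\g{g}_\alpha$ and is skew-adjoint. Hence $[\xi_\alpha,S]\in\g{g}_\alpha$ and $\langle[\xi_\alpha,S],\xi_\alpha\rangle=0$.

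By Lemma~\ref{lemma:spperp} we know $\eta_\alpha\in\operatorname{span}\{\xi_\alpha,J_{\eta_{2\alpha}}\xi_\alpha\}$ with $\langle J_{\eta_{2\alpha}}\xi_\alpha,\eta_\alpha\rangle=-1$. So the coefficient of $J_{\eta_{2\alpha}}\xi_\alpha$ is encoded in $\langle[\xi_\alpha,S],\eta_\alpha\rangle$. The claim then splits into two pieces: the value $\langle[\xi_\alpha,S],J_{\eta_{2\alpha}}\xi_\alpha\rangle=\tfrac12\lvert\xi_\alpha\rvert^4$, and vanishing of the component of $[\xi_\alpha,S]$ orthogonal to $\R\xi_\alpha\oplus\R J_{\eta_{2\alpha}}\xi_\alpha$.

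For the first piece I would use the polarity condition of Proposition~\ref{th:criterion}. The element $S+J_{\eta_{2\alpha}}\xi_\alpha+\tfrac12\eta_{2\alpha}\in\g{h}$ must be orthogonal to the bracket computed in~\eqref{eq:bracket-spperp}. The $\g{t}$-part contributes $-\langle S,(1+\theta)[\theta\xi_\alpha,\eta_\alpha]\rangle=-2\langle[S,\xi_\alpha],\eta_\alpha\rangle$, exactly as in the proof of Lemma~\ref{lemma:spperp}. The remaining terms are explicit: $\langle J_{\eta_{2\alpha}}\xi_\alpha,\tfrac12\eta_\alpha+J_{\eta_{2\alpha}}\xi_\alpha\rangle$ and $\tfrac14\lvert\eta_{2\alpha}\rvert^2$ (times the factor coming from $1+\theta$). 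Substituting the formula for $\eta_\alpha$ from Lemma~\ref{lemma:spperp}, together with $\lvert J_{\eta_{2\alpha}}\xi_\alpha\rvert^2=\lvert\xi_\alpha\rvert^2$, should give $\langle[\xi_\alpha,S],\eta_\alpha\rangle$. Hence, using $\langle[\xi_\alpha,S],\xi_\alpha\rangle=0$, it should also give $\langle[\xi_\alpha,S],J_{\eta_{2\alpha}}\xi_\alpha\rangle$, which I expect to equal $\tfrac12\lvert\xi_\alpha\rvert^4$.

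As a cross-check, I would also bracket $S+J_{\eta_{2\alpha}}\xi_\alpha+\tfrac12\eta_{2\alpha}$ with $T_{\xi_\alpha}-\lvert\xi_\alpha\rvert^2B+\xi_\alpha-\tfrac12\langle\xi_\alpha,\eta_\alpha\rangle\eta_{2\alpha}$ from~\eqref{eq:tangent-vectors}. The result lies in $\g{h}$, so its inner products with $B+(1-\theta)\xi_\alpha$ and $(1-\theta)(\eta_\alpha+\eta_{2\alpha})$ vanish. This gives relations analogous to~\eqref{eq:Txieta} involving $\langle[S,\xi_\alpha],\cdot\rangle$ and $\langle[T_{\xi_\alpha},J_{\eta_{2\alpha}}\xi_\alpha],\cdot\rangle$, where $\ad(T_{\xi_\alpha})$ and $\ad(S)$ commute because $\g{t}$ is abelian.

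The main obstacle is the second piece: showing that $[\xi_\alpha,S]$ has no component in $\g{g}_\alpha\ominus(\R\xi_\alpha\oplus\R J_{\eta_{2\alpha}}\xi_\alpha)$. I would use that the bracketed element above lies in $\g{h}$ and has zero $\g{a}$-component. Its $\g{a}\oplus\g{n}$-projection is orthogonal to $\g{h}_\g{p}^\perp$, so its $\g{g}_\alpha$-part $U$ must satisfy the constraints read off from~\eqref{eq:tangent-vectors}, in particular $\langle U,\xi_\alpha\rangle=0$. In addition, pairing with $(1-\theta)\g{w}$ forces $U$ to be orthogonal to $\g{w}$. For the directions in $\H\xi_\alpha$ other than $\xi_\alpha$ and $J_{\eta_{2\alpha}}\xi_\alpha$, and in $\g{g}_\alpha\ominus(\H\xi_\alpha\oplus\g{w})$, I would try testing the polarity condition against $[B+(1-\theta)\xi_\alpha,(1-\theta)U']$ for $U'\in\g{w}$, and against brackets with $Z\in\g{h}\cap\g{g}_{2\alpha}$. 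If that is insufficient, I would exploit that $\ad(S)$ lies in a torus of $\g{sp}(1)\oplus\g{sp}(n-1)$ and so acts $\H$-semilinearly. This constrains $\ad(S)\xi_\alpha$ to lie in $\H\xi_\alpha$, possibly after choosing $\g{t}$ adapted to $\xi_\alpha$.
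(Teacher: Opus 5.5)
\textbf{There is a genuine gap: the second piece is not established.} Your first piece matches the paper's final step. Pairing $S+J_{\eta_{2\alpha}}\xi_\alpha+\tfrac12\eta_{2\alpha}$ with \eqref{eq:bracket-spperp} does give $\langle[\xi_\alpha,S],J_{\eta_{2\alpha}}\xi_\alpha\rangle=\tfrac12\lvert\xi_\alpha\rvert^4$.

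The $\g{w}$-component can be handled by one of your suggestions. Pair $S+J_{\eta_{2\alpha}}\xi_\alpha+\tfrac12\eta_{2\alpha}$ itself with $[B+(1-\theta)\xi_\alpha,(1-\theta)W]$, $W\in\g{w}$, and use $\H\xi_\alpha\perp\g{w}$; this is what the paper does.

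None of your other tools reach the directions $U\in\g{g}_\alpha\ominus(\g{w}\oplus\R\xi_\alpha\oplus\R J_{\eta_{2\alpha}}\xi_\alpha)$, which include $\g{J}\xi_\alpha\ominus\R J_{\eta_{2\alpha}}\xi_\alpha$.
\begin{itemize}
\item Brackets with $Z\in\g{h}\cap\g{g}_{2\alpha}$ land in $\g{g}_{2\alpha}$, so they never see $\g{g}_\alpha$-components.
\item The bracket with the $T_{\xi_\alpha}$-element has $\g{g}_\alpha$-part $[S,\xi_\alpha]+[J_{\eta_{2\alpha}}\xi_\alpha,T_{\xi_\alpha}]+\tfrac12\lvert\xi_\alpha\rvert^2J_{\eta_{2\alpha}}\xi_\alpha$, which carries the unknown term $[J_{\eta_{2\alpha}}\xi_\alpha,T_{\xi_\alpha}]$. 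In particular, pairing it with $(1-\theta)\g{w}$ gives a relation involving $T_{\xi_\alpha}$, not $\langle[\xi_\alpha,S],\g{w}\rangle=0$.
\item More fundamentally, orthogonality to $\g{h}_\g{p}^\perp$ constrains an element of $\g{h}\cap\g{n}$ only through its components along $\xi_\alpha$, $\eta_\alpha$, $\g{w}$ and $\g{g}_{2\alpha}$. The directions $U$ above are invisible to it.
\end{itemize}

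Your fallback is not valid. For $n\geq 3$, the $\g{sp}(n-1)$-part of $\ad(S)$ commutes with all $J_X$, but a torus element with distinct weights does not preserve $\H\xi_\alpha$ for $\xi_\alpha$ in general position. For example, $\mathrm{diag}(i,2i)$ applied to $(1,1)\in\H^2$ gives a vector outside the quaternionic line of $(1,1)$. You also cannot adapt $\g{t}$ to $\xi_\alpha$: $S$ lies in the $\g{t}$-projection of $\g{h}$, and $\xi_\alpha$ is already fixed by the normalizations. Indeed, $[\xi_\alpha,S]\in\H\xi_\alpha$ is part of what the lemma asserts.

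\textbf{The missing idea} is to use the elements of $\g{h}$ that actually have a component along such a $U$.
\begin{enumerate}
\item For these $U$ you have $U\perp\xi_\alpha$, and $U\perp\eta_\alpha$ because $\eta_\alpha\in\R\xi_\alpha\oplus\R J_{\eta_{2\alpha}}\xi_\alpha$ by Lemma~\ref{lemma:spperp}. Hence \eqref{eq:tangent-vectors} reads $T_U+U\in\g{h}$, with no $B$ or $\eta_{2\alpha}$ correction.
\item Pairing $T_U+U$ with \eqref{eq:bracket-spperp}, which is the polarity condition, gives $\langle[T_U,\xi_\alpha],\eta_\alpha\rangle=0$. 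Hence $\langle[T_U,\xi_\alpha],J_{\eta_{2\alpha}}\xi_\alpha\rangle=0$.
\item The bracket $[S+J_{\eta_{2\alpha}}\xi_\alpha+\tfrac12\eta_{2\alpha},\,T_U+U]$ lies in $\g{h}$, so it is orthogonal to $B+(1-\theta)\xi_\alpha\in\g{h}_\g{p}^\perp$. Its $\g{g}_{2\alpha}$-terms drop out. What remains is $\langle[\xi_\alpha,S],U\rangle$ plus, up to sign, $\langle J_{\eta_{2\alpha}}\xi_\alpha,[T_U,\xi_\alpha]\rangle$, which vanishes by the previous step. Hence $\langle[\xi_\alpha,S],U\rangle=0$.
\end{enumerate}
Together with skew-symmetry, the $\g{w}$-step and your first piece, this yields $[\xi_\alpha,S]=\tfrac12\lvert\xi_\alpha\rvert^2J_{\eta_{2\alpha}}\xi_\alpha$.
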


\begin{proof}
Since $\g{k}_0$ stabilizes $\g{g}_\alpha$ and $\ad(S)$ is skew-symmetric, we get $[\xi_\alpha,S]\in\g{g}_\alpha\ominus\R\xi_\alpha$.
	
Let $W\in\g{w}$.
Since $\g{h}$ is orthogonal to $[\g{h}_{\g{p}}^\perp,\g{h}_{\g{p}}^\perp]$ 
and $\H\xi_\alpha$ is orthogonal to $\g{w}$, 
we have
\begin{align*}
0&{}=\Bigl< S+J_{\eta_{2\alpha}}\xi_\alpha+\frac{1}{2}\eta_{2\alpha},\,
[B+(1-\theta)\xi_\alpha,(1-\theta)W]\Bigr>\\
&{}=\Bigl< S+J_{\eta_{2\alpha}}\xi_\alpha+\frac{1}{2}\eta_{2\alpha},\,
(1+\theta)\Bigl(\frac{1}{2}W-[\theta\xi_\alpha,W]\Bigr)\Bigr>\\[1ex]
&{}=-\langle S,(1+\theta)[\theta\xi_\alpha,W]\rangle
=-2\langle S,[\theta\xi_\alpha,W]\rangle=2\langle [\xi_\alpha,S],W\rangle,
\end{align*}
or, in other words, $\langle[\xi_\alpha,S],\g{w}\rangle=0$.

Let $U\in\g{g}_\alpha\ominus(\g{w}\oplus\R\xi_\alpha\oplus\R J_{\eta_{2\alpha}}\xi_\alpha)$.
Then, there exists $T_U\in\g{t}$ such that $T_U+U\in\g{h}$.

Since $\g{h}$ is perpendicular to $[\g{h}_{\g{p}}^\perp,\g{h}_{\g{p}}^\perp]$, taking inner product of $T_U+U$ with the expression in~\eqref{eq:bracket-spperp}, and using Lemma~\ref{lemma:spperp} yields
\[
0=-\langle T_U,(1+\theta)[\theta\xi_\alpha,\eta_\alpha]\rangle
=-2\langle[T_U,\xi_\alpha],\eta_\alpha\rangle
=\frac{2}{\lvert\xi_\alpha\rvert^2}\langle[T_U,\xi_\alpha],J_{\eta_{2\alpha}}\xi_\alpha\rangle,
\]
from where $\langle[T_U,\xi_\alpha],J_{\eta_{2\alpha}}\xi_\alpha\rangle=0$.

Since $\g{h}$ is a Lie algebra,
\begin{equation}\label{eq:bracket-U-xialpha}
[S,U]-[T_U,J_{\eta_{2\alpha}}\xi_\alpha]+[J_{\eta_{2\alpha}}\xi_\alpha,U]-\frac{1}{2}[T_U,\eta_{2\alpha}]
=\Bigl[S+J_{\eta_{2\alpha}}\xi_\alpha+\frac{1}{2}\eta_{2\alpha},T_U+U\Bigr]\in\g{h}.
\end{equation}
Taking inner product of~\eqref{eq:bracket-U-xialpha} with $B+(1-\theta)\xi_\alpha\in\g{h}_{\g{p}}^\perp$ yields
\[
0=\langle[S,U],\xi_\alpha\rangle-\langle[T_U,J_{\eta_{2\alpha}}\xi_\alpha],\xi_\alpha\rangle
=\langle[\xi_\alpha,S],U\rangle.
\]
Therefore, $[\xi_\alpha,S]\in\R J_{\eta_{2\alpha}}\xi_\alpha$.

We finally take inner product of $S+J_{\eta_{2\alpha}}\xi_\alpha+\frac{1}{2}\eta_{2\alpha}$ with~\eqref{eq:bracket-spperp} taking into account Lemma~\ref{lemma:spperp}, the skew-symmetry of $\ad(S)$, and $\langle J_{\eta_{2\alpha}}\xi_\alpha,\eta_\alpha\rangle=-1$, to obtain
\begin{align*}
0&{}=-\langle S,(1+\theta)[\theta\xi_\alpha,\eta_\alpha]\rangle
+\frac{1}{2}\langle J_{\eta_{2\alpha}}\xi_\alpha,\eta_\alpha\rangle
+\lvert J_{\eta_{2\alpha}}\xi_\alpha\rvert^2+\frac{1}{4}\lvert\eta_{2\alpha}\rvert^2\\
&{}=-2\langle[S,\xi_\alpha],\eta_\alpha\rangle+\lvert\xi_\alpha\rvert^2
=\frac{2}{\lvert\xi_\alpha\rvert^2}\langle[S,\xi_\alpha],J_{\eta_{2\alpha}}\xi_\alpha\rangle
+\lvert\xi_\alpha\rvert^2,
\end{align*}
and thus,
\[
\langle[\xi_\alpha,S],J_{\eta_{2\alpha}}\xi_\alpha\rangle=\frac{\lvert\xi_\alpha\rvert^4}{2}.
\]
Since, $\lvert J_{\eta_{2\alpha}}\xi_\alpha\rvert^2=\lvert\xi_\alpha\rvert^2$, altogether this implies the result.
\end{proof}

We are now ready to prove the main step of this section.

\begin{proof}[Proof of Proposition~\ref{prop:section-in-a+p1}]
We just have to show that the case $\dim(\g{h}_{\g{p}}^\perp)_{\g{p}_{2\alpha}}= 1$ is not possible.
On the contrary, assume that this is true, and we will reach a contradiction.

We define 
\[
g=\Exp\Bigl(-\frac{2}{\lvert\xi_\alpha\rvert^2}\xi_\alpha\Bigr).
\]

The isotropy group of $H$ at $o$ is $H_o=H\cap K$, 
and its Lie algebra is $\g{h}\cap\g{k}=\g{h}\cap\g{t}$.
On the other hand, $H\cdot g(o)=g(g^{-1}Hg)\cdot o$, 
so the orbit of $H$ through $g(o)$ is congruent to the orbit of $g^{-1}Hg$ through $o$.
Thus, the isotropy group of $H$ at $g(o)$ is conjugate to the isotropy group of $g^{-1}Hg$ at $o$, 
and its Lie algebra is conjugate to $\Ad(g)(\g{h})\cap\g{k}=\Ad(g)(\g{h})\cap\g{t}$.

Since all the orbits of the action of $H$ are principal, the slice representations
of $H$ are trivial~\cite[Theorem~2.1.3]{BerndtConsoleOlmos16}.
Hence, at $o$ we have $[\g{h}\cap\g{t},\g{h}_{\g{p}}^\perp]=0$.
This implies that
$(1-\theta)[T,\xi_\alpha]=[T,B+(1-\theta)\xi_\alpha]=0$
for all $T\in\g{h}\cap\g{t}$.
Consequently, $\ad(\xi_\alpha)(\g{h}\cap \g{t})=0$ and $\Ad(g)(\g{h}\cap\g{t})=\g{h}\cap\g{t}$.
In particular, $\g{h}\cap\g{t}\subseteq\Ad(g)(\g{h})\cap\g{t}$.
The contradiction will be achieved if we show that the equality does not hold.

Recall the element $S+J_{\eta_{2\alpha}}\xi_\alpha
+\frac{1}{2}\eta_{2\alpha}\in\g{h}$.
Note that $S\not \in \g{h}\cap \g{t}$ by Lemma~\ref{lemma:bracket-xialpha-S}.
Using again Lemma~\ref{lemma:bracket-xialpha-S} and 
$[\xi_\alpha,J_{\eta_{2\alpha}}\xi_\alpha]=\frac{1}{2}\lvert\xi_\alpha\rvert^2\eta_{2\alpha}$ yields
\begin{align*}
&\Ad(g)\Bigl(S+J_{\eta_{2\alpha}}\xi_\alpha+\frac{1}{2}\eta_{2\alpha}\Bigr)
=e^{-\frac{2}{\lvert\xi_\alpha\rvert^2}\ad(\xi_\alpha)}
\Bigl(S+J_{\eta_{2\alpha}}\xi_\alpha+\frac{1}{2}\eta_{2\alpha}\Bigr)\\
&\qquad{}=S+J_{\eta_{2\alpha}}\xi_\alpha+\frac{1}{2}\eta_{2\alpha}
-\frac{2}{\lvert\xi_\alpha\rvert^2}[\xi_\alpha,\,
S+J_{\eta_{2\alpha}}\xi_\alpha]
+\frac{2}{\lvert\xi_\alpha\rvert^4}[\xi_\alpha,\,[\xi_\alpha,S]]\\
&\qquad{}=S+J_{\eta_{2\alpha}}\xi_\alpha+\frac{1}{2}\eta_{2\alpha}
-J_{\eta_{2\alpha}}\xi_\alpha
-\eta_{2\alpha}
+\frac{1}{2}\eta_{2\alpha}=S.
\end{align*}
As a consequence, $S\in\Ad(g)(\g{h})\cap\g{t}$, 
so $\dim\Ad(g)(\g{h})\cap\g{t}>\dim(\g{h}\cap\g{t})$, contradicting the fact that all isotropy groups are conjugate.
\end{proof}

Now that Proposition~\ref{prop:section-in-a+p1} has been proved,
we are in a position to verify Theorem~\ref{th:main}~(\ref{th:main-classification}) in the quaternionic case.

\begin{proof}[Proof of Theorem~\textup{\ref{th:main}~(\ref{th:main-classification})} when $\F=\H$]
Let $H$ be a closed connected subgroup of $\s{Sp}(1,n)$ acting polarly on $\H\s{H}^n$ inducing a foliation.
Because of Lemma~\ref{lemma:B+galpha} and Proposition~\ref{prop:section-in-a+p1}, we may assume up to orbit equivalence
that $\g{h}\subseteq\g{t}\oplus\g{a}\oplus\g{n}$,
$\g{h}_{\g{p}}^\perp\subseteq\g{a}\oplus\g{p}_\alpha$, and that there exists
some $\xi_\alpha\in \g{g}_\alpha$ such that $B+(1-\theta)\xi_\alpha\in\g{h}_\g{p}^\perp$.
Consequently, we may decompose $\g{h}_\g{p}^\perp=\R(B+(1-\theta)\xi_\alpha)\oplus(1-\theta)\g{v}$,
where $\g{v}\subseteq\g{g}_\alpha$ is orthogonal to $\R\xi_\alpha$.
Recalling that $\g{h}_\g{p}^\perp$ is a totally real subspace due to Proposition~\ref{prop:section-constant-curvature},
we deduce that $\g{v}$ is a totally real (equivalently, abelian)
subspace of $\g{g}_\alpha$, and the subspaces $\H\xi_\alpha$ and $\H\g{v}$
are orthogonal.

On the one hand, assume $\xi_\alpha=0$.
We then have $\g{h}_\g{p}^\perp=\g{a}\oplus(1-\theta)\g{v}$, so that
$\g{h}_{\g{a}\oplus\g{n}}=\g{n}\ominus\g{v}=\g{s}_{\g{a},\g{v}}$.
Using Proposition~\ref{prop:orbit-equivalence}, we obtain that $H$ and $\s{S}_{\g{a},\g{v}}$
have the same orbits.

On the other hand, suppose $\xi_\alpha\neq 0$.
In this case, we define the element
\[
g=\Exp\Bigl(-\frac{2}{\lvert\xi_\alpha\rvert^2}\xi_\alpha\Bigr)\in N.
\]
Then $H$ and $gHg^{-1}$ induce orbit equivalent actions on $\H\s{H}^n$.
Moreover, the Lie algebra of $gHg^{-1}$ is $\Ad(g)\g{h}\subseteq \g{t}\oplus\g{a}\oplus\g{n}$.
Our aim is to show that $(\Ad(g)\g{h})_{\g{p}}^\perp=(1-\theta)\g{w}$, where $\g{w}=\R\xi_\alpha\oplus\g{v}$
is an abelian subspace of $\g{g}_\alpha$.

Note that $B+\xi_\alpha$ is orthogonal to $\g{h}$ because
$B+(1-\theta)\xi_\alpha\in\g{h}_\g{p}^\perp$ and $\theta\g{n}\perp\g{h}$.
Consequently, $\Ad(g)\g{h}$ is orthogonal to the vector
\begin{align*}
    \Ad(g^{-1})^*(B+\xi_\alpha)
    &=
    e^{-\frac{2}{\lvert\xi_\alpha\rvert^2}\ad(\theta\xi_\alpha)}(B+\xi_\alpha)
    \equiv
    B+\xi_\alpha-\frac{2}{\lvert\xi_\alpha\rvert^2}\lvert\xi_\alpha\rvert^2H_\alpha
    =
    \xi_\alpha
    \pmod{\theta\g{n}},
\end{align*}
so $\xi_\alpha$ is orthogonal to $\Ad(g)\g{h}$.

Let $\eta\in\g{v}$ be arbitrary.
Then $\eta$ is orthogonal to $\g{h}$ since $(1-\theta)\eta\in\g{h}_\g{p}^\perp$ and $\g{h}\perp\theta\g{n}$.
Moreover, because the action of $H$ is polar, the element
\begin{align*}
    (1+\theta)\eta-4[\theta\xi_\alpha,\eta]=2[B+(1-\theta)\xi_\alpha,(1-\theta)\eta]
\end{align*}
is orthogonal to $\g{h}$, and since $\eta$ and $\theta\g{n}$ are already orthogonal
to $\g{h}$ we see that $[\theta\xi_\alpha,\eta]$ is orthogonal
to $\g{h}$.
We deduce that $\Ad(g)\g{h}$ is orthogonal to the vectors
\begin{equation*}
    \Ad(g^{-1})^*\eta
    =
    e^{-\frac{2}{\lvert\xi_\alpha\rvert^2}\ad(\theta\xi_\alpha)}\eta
    \equiv
    \eta-\frac{2}{\lvert\xi_\alpha\rvert^2}[\theta\xi_\alpha,\eta]
    \pmod{\theta\g{n}}
\end{equation*}
and
\begin{align*}
    \Ad(g^{-1})^*[\theta\xi_\alpha,\eta]
    &\equiv
    [\theta\xi_\alpha,\eta]
    \pmod{\theta\g{n}}.
\end{align*}
This implies that the vectors $\eta$ and $[\theta\xi_\alpha,\eta]$ are both orthogonal to $\Ad(g)\g{h}$.

Summarizing, we have seen that $\Ad(g)\g{h}$ is orthogonal to $\g{w}=\R\xi_\alpha\oplus\g{v}$,
so for dimension reasons we have $(\Ad(g)\g{h})_\g{p}^\perp=(1-\theta)\g{w}$ and 
$(\Ad(g)(\g{h}))_{\g{a}\oplus\g{n}}=\g{a}\oplus(\g{n}\ominus \g{w})=\g{s}_{0,\g{w}}$.
Proposition~\ref{prop:orbit-equivalence} then guarantees that $gHg^{-1}$ has the same orbits as
$\s{S}_{0,\g{w}}$, so we deduce that the actions of $H$ and $\s{S}_{0,\g{w}}$ are orbit equivalent.
This concludes the proof.\qedhere
\end{proof}

\section{Polar foliations on the Cayley hyperbolic plane}\label{sec:cayley}

We now classify polar homogeneous foliations on the Cayley hyperbolic plane $M=\O\s{H}^2=\s{F}_4^{-20}/\s{Spin}(9)$.

We consider a closed connected subgroup $H\subseteq \s{F}_4^{-20}$ that acts polarly on $M$ inducing a homogeneous foliation.
The action of $H$ is assumed to be nontrivial and nontransitive.
We let $\Sigma\subseteq M$ be the section through $o$ of the action, and let $\g{h}_{\g{p}}^\perp$ be its tangent space.
By virtue of Proposition~\ref{th:Borel:tan}, we can assume up to orbit equivalence that the Lie algebra $\g{h}$ of $H$ is contained in a maximally noncompact subalgebra of the form $\g{t}\oplus\g{a}\oplus\g{n}$, with $\g{t}\subseteq \g{k}_0$ an abelian subspace.

According to lemmas~\ref{lemma:equivalence-B+galpha} and~\ref{lemma:B+galpha} we can consider, up to orbit equivalence, two possibilities.

\subsection{The case $\g{a}\subseteq \g{h}_{\g{p}}^\perp$}\label{sec:a-in-spperp}\hfill

Let us suppose that $B\in\g{h}_{\g{p}}^\perp$.
In particular, we see that $\g{h}_{\g{p}}^\perp$ is invariant under the Jacobi operator $R_{B}=-\ad(B)^2\in\End(\g{p})$.
As a consequence, $\g{h}_{\g{p}}^\perp$ can be decomposed into the eigenspaces of the restriction $R_{B}\vert_{\g{h}_{\g{p}}^\perp}$, which are precisely the intersections of $\g{h}_{\g{p}}^\perp$ with the eigenspaces of $R_{B}$.
Thus, from~\eqref{eq:jacobi} we have the splitting
\begin{equation*}
	\g{h}_{\g{p}}^\perp=\g{a}\oplus(1-\theta)\g{v}\oplus(1-\theta)\g{w},
\end{equation*}
where $\g{v}$ is a real subspace of $\g{g}_\alpha$ and $\g{w}$ is a subspace of $\g{g}_{2\alpha}$.

We first show that $\g{v}$ (and therefore $\g{g}_{\alpha}\ominus\g{v}$) is invariant under $J_X$ for every $X\in\g{w}$.
Let $U\in\g{v}$ and $X\in\g{w}$.
Because the action of $H$ is polar, we see that $(1+\theta)J_X U=[(1-\theta)U,(1-\theta)X]$ is orthogonal to $\g{h}$, and since $\theta\g{n}$ is perpendicular to $\g{h}$, we conclude that $J_X U$ is perpendicular to $\g{h}$.
This gives $J_X U\in \g{v}$, as desired.

Now, note that the maximum dimension of a totally geodesic submanifold of $M$ is equal to $8= \dim \g{g}_\alpha$, 
corresponding to $\R\s{H}^8(1)\subseteq \O\s{H}^2$ or $\H\s{H}^2\subseteq\O\s{H}^2$.
Since the dimension of $\Sigma$ is $1+\dim \g{v}+\dim\g{w}$, we see that $\g{v}\neq \g{g}_\alpha$, so we can choose a nonzero vector $V\in\g{g}_\alpha\ominus\g{v}$.
Let $X\in\g{w}$ be any vector.
The elements $V$ and $J_X V$ belong to $\g{g}_\alpha\ominus \g{v}\subseteq \g{h}_{\g{a}\oplus\g{n}}$, the orthogonal projection of $\g{h}$ onto $\g{a}\oplus\g{n}$. 
Thus, there exist $T$, $T'\in\g{t}$ satisfying $T+V$, $T'+J_X V\in\g{h}$.
In particular, we see that $[T,J_X V]+[V,T']+(1/2)\lvert V \rvert^2 X=[T+V,T'+J_X V]\in\g{h}$.
Taking inner product with $(1-\theta)X\in\g{h}_{\g{p}}^\perp$ we deduce that $(1/2)\lvert V \rvert^2 \lvert X \rvert ^2 = 0$.
As a consequence, $X=0$, which means that $\g{w}$ is trivial.

Finally, we claim that $\g{v}$ is at most one-dimensional.
Indeed, suppose that $\g{v}\neq 0$.
Then the restriction of $R_{B}$ to $\g{h}_{\g{p}}^\perp\ominus \g{a}=(1-\theta)\g{v}$ is a multiple of the identity map.
As $\Sigma$ is a symmetric space of rank one, we deduce from this that $\Sigma$ is a space of constant curvature.
Now, the sectional curvature of $\Sigma$ is $-{1}/{4}$, and Subsection~\ref{sec:fhn-tg-submanifolds} reveals that $\Sigma$ is congruent to $\R\s{H}^2(2)$.
In particular, $\g{v}$ is one-dimensional.
In conclusion, the normal space $\g{h}_{\g{p}}^\perp$ is either $\g{a}$ or $\g{a}\oplus(1-\theta)\ell$ for a line $\ell\subseteq \g{g}_\alpha$.

We can summarize our discussion in the following result:

\begin{proposition}\label{prop:case-a-conclusion}
If $\g{a}\subseteq \g{h}_{\g{p}}^\perp $, then $\g{h}_{\g{a}\oplus\g{n}}=\g{n}\ominus\g{v}$, 
where $\g{v}\subseteq \g{g}_\alpha$ and $\dim \g{v}\leq 1$.
\end{proposition}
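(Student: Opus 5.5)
The discussion preceding the statement already gives $\g{h}_{\g{p}}^\perp=\g{a}\oplus(1-\theta)\g{v}$ with $\g{w}=0$ and $\dim\g{v}\leq 1$. What remains is to convert this into the claimed description of $\g{h}_{\g{a}\oplus\g{n}}$. The plan is therefore to pass from the normal space in $\g{p}$ to the orthogonal complement of $\g{h}$ inside $\g{a}\oplus\g{n}$.

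\textbf{Step 1: normal vectors in $\g{p}$ give normal vectors in $\g{a}\oplus\g{n}$.} Since $\g{h}\subseteq\g{t}\oplus\g{a}\oplus\g{n}$, the subalgebra $\g{h}$ is orthogonal to $\theta\g{n}$, and $\g{t}$ is orthogonal to $\g{a}\oplus\g{n}$. Take $\xi\in\g{a}\oplus\g{n}$ and write $\xi=\xi_0+\xi_{\g{n}}$ with $\xi_0\in\g{a}$ and $\xi_{\g{n}}\in\g{n}$. Then
\[
\xi_0+(1-\theta)\xi_{\g{n}}\in\g{h}_{\g{p}}^\perp
\quad\text{if and only if}\quad
\xi\perp\g{h}.
\]
Indeed, $\theta\xi_{\g{n}}\in\theta\g{n}$ is automatically orthogonal to $\g{h}$, so the two orthogonality conditions differ only by this vector. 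Moreover, for $Y\in\g{h}$ we have $\langle\xi,Y\rangle=\langle\xi,Y_{\g{a}\oplus\g{n}}\rangle$.

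\textbf{Step 2: identify $\g{h}_{\g{a}\oplus\g{n}}$.} Because the map $\xi\mapsto\xi_0+(1-\theta)\xi_{\g{n}}$ is a linear isomorphism $\g{a}\oplus\g{n}\to\g{p}$, Step 1 shows that the orthogonal complement of $\g{h}_{\g{a}\oplus\g{n}}$ in $\g{a}\oplus\g{n}$ is exactly the preimage of $\g{h}_{\g{p}}^\perp$. This preimage is $\g{a}\oplus\g{v}$. Hence
\[
\g{h}_{\g{a}\oplus\g{n}}=(\g{a}\oplus\g{n})\ominus(\g{a}\oplus\g{v})=\g{n}\ominus\g{v}.
\]
As a consistency check, the foliation property gives $\dim\g{h}_{\g{a}\oplus\g{n}}=\dim M-\dim\g{h}_{\g{p}}^\perp$, which agrees with this formula.

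\textbf{Step 3: the constraint on $\g{v}$.} The bound $\dim\g{v}\leq 1$ comes from the preceding curvature argument. If $\g{v}\neq0$, then $R_B$ acts on $(1-\theta)\g{v}$ as $-\tfrac14\id$, so $\Sigma$ has constant curvature $-1/4$. By Subsection~\ref{sec:fhn-tg-submanifolds}, such a totally geodesic submanifold of $\O\s{H}^2$ is an $\R\s{H}^2(2)$, so $\dim\Sigma=2$ and hence $\dim\g{v}=1$.

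I do not expect a genuine obstacle: the substantive work, namely $\g{w}=0$ and the rank-one curvature identification, was done before the statement. The only point needing care is the bookkeeping between $\g{p}$ and $\g{a}\oplus\g{n}$ in Step 1, which rests on the orthogonality of $\g{h}$ to $\theta\g{n}$ and to $\g{k}\ominus\g{t}$.
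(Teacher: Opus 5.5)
Your Steps 1 and 2 are correct. Since $\g{h}\perp\theta\g{n}$ and $\g{t}\perp\g{a}\oplus\g{n}$, the isomorphism $\xi_0+\xi_{\g{n}}\mapsto\xi_0+(1-\theta)\xi_{\g{n}}$ maps the orthogonal complement of $\g{h}_{\g{a}\oplus\g{n}}$ in $\g{a}\oplus\g{n}$ onto $\g{h}_{\g{p}}^\perp$; this is exactly the translation the paper uses tacitly.

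The problem is that the proposal does not prove the proposition. The ``discussion preceding the statement'' from which you import $\g{h}_{\g{p}}^\perp=\g{a}\oplus(1-\theta)\g{v}$ \emph{is} the paper's proof of this proposition, which is introduced as a summary of it. So the decisive step is assumed, not shown. Splitting the Lie triple system along the eigenspaces of $R_B$ gives $\g{h}_{\g{p}}^\perp=\g{a}\oplus(1-\theta)\g{v}\oplus(1-\theta)\g{w}$ with $\g{v}\subseteq\g{g}_\alpha$ and $\g{w}\subseteq\g{g}_{2\alpha}$, and one must prove $\g{w}=0$. This is not bookkeeping, and your Step 3 depends on it. If $\g{w}\neq0$, then $R_B$ has eigenvalue $-1$ on $(1-\theta)\g{w}$ and the constant-curvature conclusion is lost. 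Curvature alone then does not exclude $\Sigma$ being a $\C\s{H}^2$ or $\H\s{H}^2$ with $B\in T_o\Sigma$ (giving $\dim\g{v}=2$ or $4$). Nor does it exclude an $\R\s{H}^k(1)$ with tangent space in $\g{a}\oplus\g{p}_{2\alpha}$, which gives $\g{v}=0$ but $\g{g}_{2\alpha}\not\subseteq\g{h}_{\g{a}\oplus\g{n}}$.

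The missing argument, as in the paper, uses polarity twice.
\begin{itemize}
\item For $U\in\g{v}$ and $X\in\g{w}$, polarity gives $(1+\theta)J_XU=[(1-\theta)U,(1-\theta)X]\perp\g{h}$. Hence $J_XU\perp\g{h}$, so $J_X\g{v}\subseteq\g{v}$ by your Step 1, and $\g{g}_\alpha\ominus\g{v}$ is $J_X$-invariant by skew-symmetry.
\item Since the action is nontrivial, $\Sigma\neq M$, so $\dim\Sigma\leq 8=\dim\g{g}_\alpha$. Hence there is $0\neq V\in\g{g}_\alpha\ominus\g{v}$.
\item By the argument of your Step 2, $V$ and $J_XV$ lie in $\g{h}_{\g{a}\oplus\g{n}}$, so $T+V,\,T'+J_XV\in\g{h}$ for some $T,T'\in\g{t}$.
\item As $\g{t}$ is abelian, $[T+V,T'+J_XV]=[T,J_XV]+[V,T']+\tfrac12\lvert V\rvert^2X\in\g{h}$.
\item Pairing this with $(1-\theta)X\in\g{h}_{\g{p}}^\perp$ gives $\tfrac12\lvert V\rvert^2\lvert X\rvert^2=0$, i.e.\ $X=0$.
\end{itemize}
Once $\g{w}=0$ is established this way, your Steps 2 and 3 finish the proof exactly as the paper does.
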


\subsection{The case $\g{h}_{\g{p}}^\perp\cap\g{p}_\alpha\neq 0$}\hfill

Now we assume that there exists a nonzero vector $\xi_\alpha\in\g{g}_\alpha$ with $(1-\theta)\xi_\alpha\in \g{h}_{\g{p}}^\perp$.
Without loss of generality, we may choose $\xi_\alpha$ so that $\lvert \xi_\alpha \rvert = 1$.
Once again, we know that $\g{h}_{\g{p}}^\perp$ is invariant under the Jacobi operator $R_{(1-\theta)\xi_\alpha}$, so using~\eqref{eq:jacobi} we may write
\begin{equation*}
	\g{h}_{\g{p}}^\perp=(1-\theta)(\R\xi_\alpha\oplus\g{v})\oplus \g{w},
\end{equation*}
where $\g{v}\subseteq \g{g}_\alpha\ominus \R \xi_\alpha=\g{J}\xi_\alpha$ is a real subspace and $\g{w}\subseteq \g{a}\oplus\g{p}_{2\alpha}$.

We start by claiming that the space $\R\xi_\alpha\oplus\g{v}$ is invariant under $J_X$ for all $X\in\g{w}_{\g{g}_{2\alpha}}$.
To see this, let $\eta\in\R\xi_\alpha\oplus\g{v}$ and $t B+(1-\theta)X\in\g{w}$, where $t\in \R$ and $X\in\g{g}_{2\alpha}$.
Since the action of $H$ is polar, we see that $(1+\theta)(\tfrac{t}{2}\eta-J_X\eta)=[t B+(1-\theta)X,(1-\theta)\eta]$ is orthogonal to $\g{h}$.
This, combined with the fact that $\theta\g{n}$ and $\eta$ are already orthogonal to $\g{h}$, implies that $J_X\eta$ is orthogonal $\g{h}$, so $J_X \eta\in\R \xi_\alpha \oplus \g{v}$, as desired.
In particular, the skew-symmetry of $J_X$ also implies that $\g{g}_{\alpha}\ominus(\R\xi_\alpha\oplus\g{v})$ is $J_X$-invariant.
Because $\dim\Sigma\leq8=\dim\g{g}_\alpha$, either $\g{h}_{\g{p}}^\perp=\g{p}_\alpha$ or the orthogonal complement $\g{g}_\alpha\ominus(\R\xi_\alpha\oplus\g{v})$ is nonzero.

\begin{lemma}\label{lemma:spperp-not-p-alpha}
If $\g{h}_{\g{p}}^\perp\subseteq\g{p}_\alpha$, then $\dim \g{h}_{\g{p}}^\perp=1$.
In particular, $\g{h}_{\g{p}}^\perp$ is not equal to $\g{p}_\alpha$.
\end{lemma}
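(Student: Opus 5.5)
The plan is to reproduce, in the Cayley setting, the argument that ruled out non-totally-real sections inside $\H\s{H}^1$ in the proof of Proposition~\ref{prop:section-constant-curvature}. The key point is that the hypothesis forces all of $\g{g}_{2\alpha}$ into $\g{h}$. Together with the polarity criterion, this kills every direction of $\g{h}_\g{p}^\perp$ other than $(1-\theta)\xi_\alpha$.

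\textbf{Step 1: $\g{g}_{2\alpha}\subseteq\g{h}$.} Assume $\g{h}_\g{p}^\perp\subseteq\g{p}_\alpha$. In the decomposition $\g{h}_{\g{p}}^\perp=(1-\theta)(\R\xi_\alpha\oplus\g{v})\oplus\g{w}$ this means $\g{w}=0$. Then $\g{a}\oplus\g{p}_{2\alpha}$ is orthogonal to $\g{h}_\g{p}^\perp$. Since $\g{h}\subseteq\g{t}\oplus\g{a}\oplus\g{n}$ and $(1-\theta)/2$ identifies $\g{a}\oplus\g{n}$ isometrically with $\g{p}$, this gives $\g{a}\oplus\g{g}_{2\alpha}\subseteq\g{h}_{\g{a}\oplus\g{n}}$. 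Lemma~\ref{lemma:projectionImpliesContained} with $\lambda=2\alpha$ then yields $\g{g}_{2\alpha}\subseteq\g{h}$.

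\textbf{Step 2: $\g{v}=0$.} Suppose for contradiction that $\g{v}\neq0$, and pick a nonzero $\eta\in\g{v}\subseteq\g{J}\xi_\alpha$. Write $\eta=J_Z\xi_\alpha$ with $Z\in\g{g}_{2\alpha}$ nonzero. By the formula $[U,J_XU]=\tfrac12\lvert U\rvert^2X$,
\[
[\xi_\alpha,\eta]=\tfrac12 Z\neq0 .
\]
Since $(1-\theta)\xi_\alpha$ and $(1-\theta)\eta$ both lie in $\g{h}_\g{p}^\perp$, Proposition~\ref{th:criterion} says that $\g{h}$ is orthogonal to
\[
[(1-\theta)\xi_\alpha,(1-\theta)\eta]=(1+\theta)\bigl([\xi_\alpha,\eta]-[\theta\xi_\alpha,\eta]\bigr).
\]
By Step 1, $[\xi_\alpha,\eta]\in\g{g}_{2\alpha}\subseteq\g{h}$. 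Pairing it with this bracket uses three facts: $\theta[\xi_\alpha,\eta]\in\g{g}_{-2\alpha}$ is orthogonal to $\g{g}_{2\alpha}$, and $[\theta\xi_\alpha,\eta]\in\g{g}_0$ and its image under $\theta$ are also orthogonal to $\g{g}_{2\alpha}$. The result is
\[
0=\lvert[\xi_\alpha,\eta]\rvert^2=\tfrac14\lvert Z\rvert^2,
\]
which is a contradiction. Hence $\g{v}=0$ and $\g{h}_\g{p}^\perp=\R(1-\theta)\xi_\alpha$ is one-dimensional.

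\textbf{Step 3: conclusion.} Since $\dim\g{p}_\alpha=8>1$, it follows that $\g{h}_\g{p}^\perp\neq\g{p}_\alpha$.

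I do not expect a serious obstacle here. The only point needing care is the root-space bookkeeping in Step 2: checking that the $\g{g}_{-2\alpha}$ and $\g{g}_0\oplus\theta\g{g}_0$ components are orthogonal to $\g{g}_{2\alpha}$, and that every $\eta\in\g{J}\xi_\alpha$ has the form $J_Z\xi_\alpha$. The latter holds by the definition of $\g{J}\xi_\alpha$.
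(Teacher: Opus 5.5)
Your proof is correct and follows essentially the same route as the paper's. Both arguments first use Lemma~\ref{lemma:projectionImpliesContained} to obtain $\g{g}_{2\alpha}\subseteq\g{h}$, then take a second direction $(1-\theta)J_Z\xi_\alpha\in\g{h}_\g{p}^\perp$ and pair the polarity bracket $[(1-\theta)\xi_\alpha,(1-\theta)J_Z\xi_\alpha]$ with $[\xi_\alpha,J_Z\xi_\alpha]=\tfrac12 Z\in\g{h}$ to reach a contradiction.
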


\begin{proof}
Assume $\dim \g{h}_{\g{p}}^\perp\geq 2$.
Since $\g{h}_{\g{p}}^{\perp}\subseteq \g{p}_{\alpha}$, we have $\g{a}\oplus\g{g}_{2\alpha}\subseteq\g{h}_{\g{a}\oplus\g{n}}$.
Applying Lemma~\ref{lemma:projectionImpliesContained} we deduce  $\g{g}_{2\alpha}\subseteq \g{h}$.
Now, because $\dim \g{h}_{\g{p}}^\perp\geq 2$ and 
$\g{g}_\alpha\ominus\R \xi_\alpha = \g{J}\xi_\alpha$, we may find a nonzero vector $X\in\g{g}_{2\alpha}$ such that $(1-\theta)J_X\xi_\alpha\in\g{h}_{\g{p}}^\perp$.
As a consequence, the vector 
$(1+\theta)(\tfrac{1}{2}X -[\theta \xi, J_X \xi])
=[(1-\theta)\xi_\alpha,(1-\theta)J_X \xi]$ is perpendicular to $\g{h}$ due to the polarity of the action.
This contradicts the fact that $\g{g}_{2\alpha}\subseteq \g{h}$, so necessarily $\dim \g{h}_{\g{p}}^\perp=1$.
\end{proof}

\begin{lemma}
The subspace $\g{w}$ is contained in $\g{a}$.
\end{lemma}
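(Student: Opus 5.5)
We argue by contradiction, following the same strategy used in \S\ref{sec:a-in-spperp} to show that $\g{w}$ vanishes there. Write a general element of $\g{w}\subseteq\g{a}\oplus\g{p}_{2\alpha}$ as $tB+(1-\theta)X$ with $t\in\R$ and $X\in\g{g}_{2\alpha}$. The claim is that $\g{w}_{\g{g}_{2\alpha}}=0$. So suppose there is an element $tB+(1-\theta)X\in\g{w}$ with $X\neq 0$.

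First I would produce a nonzero vector $V\in\g{g}_\alpha\ominus(\R\xi_\alpha\oplus\g{v})$. Since $\g{w}\neq 0$, we have $\g{h}_{\g{p}}^\perp\not\subseteq\g{p}_\alpha$. If $\R\xi_\alpha\oplus\g{v}$ were all of $\g{g}_\alpha$, then $\dim\Sigma\geq 9$. This is impossible, because the maximal dimension of a totally geodesic submanifold of $\O\s{H}^2$ is $8$. Hence the orthogonal complement $\g{g}_\alpha\ominus(\R\xi_\alpha\oplus\g{v})$ is nonzero, and we pick $V\neq 0$ in it. We showed above that this complement is $J_X$-invariant for every $X\in\g{w}_{\g{g}_{2\alpha}}$, so $J_XV$ also lies in it. Both $V$ and $J_XV$ are orthogonal to $\g{h}_{\g{p}}^\perp$, viewed inside $\g{a}\oplus\g{n}$ via the usual identification. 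Indeed, they are orthogonal to $(1-\theta)(\R\xi_\alpha\oplus\g{v})$ by construction, and to $\g{w}$ because $\g{w}\subseteq\g{a}\oplus\g{p}_{2\alpha}$. Therefore $V,J_XV\in\g{h}_{\g{a}\oplus\g{n}}$, and there exist $T,T'\in\g{t}$ with $T+V,\ T'+J_XV\in\g{h}$.

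Next I would bracket these two elements:
\[
[T+V,T'+J_XV]=[T,J_XV]+[V,T']+\tfrac12\lvert V\rvert^2X\in\g{h}.
\]
Now take the inner product with $tB+(1-\theta)X\in\g{h}_{\g{p}}^\perp$. The first two summands lie in $\g{g}_\alpha$, since $\g{k}_0$ normalizes $\g{g}_\alpha$, and are therefore orthogonal to $\g{a}\oplus\g{p}_{2\alpha}$. The last summand pairs to $\tfrac12\lvert V\rvert^2\langle X,(1-\theta)X\rangle=\tfrac12\lvert V\rvert^2\lvert X\rvert^2$, because $\theta X\in\g{g}_{-2\alpha}$ is orthogonal to $X$. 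Since $\g{h}\perp\g{h}_{\g{p}}^\perp$, this quantity must vanish, which forces $X=0$, a contradiction. Hence $\g{w}\subseteq\g{a}$.

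The only delicate point is guaranteeing that the complement $\g{g}_\alpha\ominus(\R\xi_\alpha\oplus\g{v})$ is nonzero. This rests on the dimension bound $\dim\Sigma\leq 8$ from the classification of totally geodesic submanifolds recalled in Subsection~\ref{sec:fhn-tg-submanifolds}, together with $\g{w}\neq 0$. The rest is the same computation as in \S\ref{sec:a-in-spperp}.
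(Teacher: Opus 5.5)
Your proof is correct and follows the paper's argument: take a nonzero $V$ in the $J_X$-invariant complement $\g{g}_\alpha\ominus(\R\xi_\alpha\oplus\g{v})$, bracket $T+V$ with $T'+J_XV$, and pair the result with $tB+(1-\theta)X\in\g{h}_{\g{p}}^\perp$ to get $\tfrac12\lvert V\rvert^2\lvert X\rvert^2=0$. The only difference is how you get $V\neq 0$. You derive it directly from the contradiction hypothesis $\g{w}\neq 0$ and the bound $\dim\Sigma\leq 8$, whereas the paper invokes Lemma~\ref{lemma:spperp-not-p-alpha} to exclude $\g{h}_{\g{p}}^\perp=\g{p}_\alpha$; both routes are valid.
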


\begin{proof}
It suffices to prove that the projection of $\g{w}$ onto $\g{g}_{2\alpha}$ is trivial.
Because of Lemma~\ref{lemma:spperp-not-p-alpha} and its preceding discussion, we know that there exists a nonzero vector $V\in\g{g}_{\alpha}\ominus(\R\xi_\alpha\oplus \g{v})$.
Now, let $tH_\alpha+(1-\theta)X\in\g{w}$, where $t\in \R$ and $X\in\g{g}_{2\alpha}$.
Recall that $\g{g}_{\alpha}\ominus(\R\xi_\alpha\oplus\g{v})\subseteq \g{h}_{\g{a}\oplus\g{n}}$ is invariant under $J_X$, so we deduce that $V$, $J_X V\in \g{h}_{\g{a}\oplus\g{n}}$.
As a consequence, we may select vectors $T$, $T'\in\g{t}$ such that $T+V$, $T'+J_X V\in\g{h}$.
Therefore, we have $[T,J_X V]+[V,T']+\tfrac{1}{2}\lvert V \rvert^2 X=[T+V,T'+J_X V]\in\g{h}$, and we obtain
\begin{equation*}
0=\Bigl< [T,J_X V]+[V,T']+\frac{1}{2}\lvert V \rvert^2 X, tH_\alpha+(1-\theta)X \Bigr> = \frac{1}{2}\lvert V \rvert^2 \lvert X \rvert^2,
\end{equation*}
thus forcing $X=0$.
This proves the desired assertion.\qedhere
\end{proof}

Because $\g{w}$ is contained in the one-dimensional space $\g{a}$, we see that either $\g{w}=\g{a}$ or $\g{w}=0$.
The case $\g{w}=\g{a}$ was already dealt with in Subsection~\ref{sec:a-in-spperp}, 
so we may assume directly that $\g{w}=0$, giving $\g{h}_{\g{p}}^\perp=(1-\theta)(\R \xi_\alpha \oplus \g{v})\subseteq \g{p}_\alpha$.
Lemma~\ref{lemma:spperp-not-p-alpha} then implies that 
$\dim \g{h}_{\g{p}}^\perp=1$, so $\g{v}=0$ and 
$\g{h}_{\g{p}}^\perp=\R (1-\theta)\xi_\alpha$.
We therefore conclude with:

\begin{proposition}\label{prop:case-palpha-conclusion}
If $\g{h}_{\g{p}}^\perp\cap \g{p}_\alpha \neq 0$, 
then $\g{h}_{\g{a}\oplus\g{n}}=(\g{a}\ominus \g{b})\oplus (\g{n}\ominus \ell)$, 
where $\g{b}\subseteq \g{a}$ is a vector subspace and $\ell\subseteq \g{p}_\alpha$ is a line.
\end{proposition}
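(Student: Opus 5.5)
Proposition~\ref{prop:case-palpha-conclusion} follows almost directly from the discussion preceding it; the task is to turn the description of $\g{h}_{\g{p}}^\perp$ into a description of $\g{h}_{\g{a}\oplus\g{n}}$. The discussion has already reduced the case $\g{h}_{\g{p}}^\perp\cap\g{p}_\alpha\neq0$ to two possibilities:
\begin{itemize}
\item[(a)] $\g{w}=\g{a}$, so that $\g{h}_{\g{p}}^\perp$ also contains $\g{a}$;
\item[(b)] $\g{w}=0$, in which case Lemma~\ref{lemma:spperp-not-p-alpha} gives $\g{h}_{\g{p}}^\perp=\R(1-\theta)\xi_\alpha$.
\end{itemize}
I would treat the two cases separately, both times using one translation step. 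Since $\g{h}\subseteq\g{t}\oplus\g{a}\oplus\g{n}$ is orthogonal to $\theta\g{n}$ and to $\g{p}$-components outside $\g{a}\oplus\g{n}$, a vector $\zeta_0+(1-\theta)\zeta_\alpha+(1-\theta)\zeta_{2\alpha}\in\g{p}$ is orthogonal to $\g{h}$ if and only if $\zeta_0+\zeta_\alpha+\zeta_{2\alpha}$ is orthogonal to $\g{h}_{\g{a}\oplus\g{n}}$. Hence the map $(1-\theta)^{-1}$ (with $(1-\theta)$ replaced by the identity on $\g{a}$) identifies $\g{h}_{\g{p}}^\perp$ with the orthogonal complement of $\g{h}_{\g{a}\oplus\g{n}}$ inside $\g{a}\oplus\g{n}$. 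The only care needed is to check that the dimensions match, which holds because the orbits have codimension $\dim\g{h}_{\g{p}}^\perp$ and tangent space $\g{h}_{\g{a}\oplus\g{n}}$ at $o$ under $AN\cong M$.

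In case (b) this identification immediately yields $\g{h}_{\g{a}\oplus\g{n}}=\g{a}\oplus(\g{n}\ominus\R\xi_\alpha)$. This is the claimed form with $\g{b}=0$ and $\ell=\R\xi_\alpha$, where the line is viewed in $\g{g}_\alpha$ via the isomorphism $(1-\theta)$ with $\g{p}_\alpha$, as the statement intends.

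In case (a), Proposition~\ref{prop:case-a-conclusion} applies and gives $\g{h}_{\g{a}\oplus\g{n}}=\g{n}\ominus\g{v}'$ with $\dim\g{v}'\le1$. Because $(1-\theta)\xi_\alpha\in\g{h}_{\g{p}}^\perp$ with $\xi_\alpha\neq0$, we get $\g{v}'=\R\xi_\alpha$. So $\g{h}_{\g{a}\oplus\g{n}}=(\g{a}\ominus\g{a})\oplus(\g{n}\ominus\R\xi_\alpha)$, which is the claimed form with $\g{b}=\g{a}$ and $\ell=\R\xi_\alpha$.

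The main (mild) obstacle is the bookkeeping in case (a). One must confirm that the earlier subsection's argument did not rely on having chosen $\xi_\alpha=0$, only on $B\in\g{h}_{\g{p}}^\perp$; rereading it shows this is so. One must also check that the line found there coincides with $\R\xi_\alpha$ rather than some other line, which the dimension bound $\dim\g{v}'\le1$ forces. Everything else is a direct rewriting of the previous lemmas.
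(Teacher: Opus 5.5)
Your proposal is correct and follows the paper's own route. After the lemmas give $\g{w}\subseteq\g{a}$, the paper splits into the case $\g{w}=\g{a}$, settled by Subsection~\ref{sec:a-in-spperp}, where $(1-\theta)\xi_\alpha\in\g{h}_{\g{p}}^\perp$ forces the line to be $\R\xi_\alpha$, and the case $\g{w}=0$, where Lemma~\ref{lemma:spperp-not-p-alpha} gives $\g{h}_{\g{p}}^\perp=\R(1-\theta)\xi_\alpha$; your explicit translation between $\g{h}_{\g{p}}^\perp$ and $(\g{a}\oplus\g{n})\ominus\g{h}_{\g{a}\oplus\g{n}}$ is the bookkeeping the paper leaves implicit, and since it is an exact linear bijection, your dimension check is not needed.
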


\subsection{End of proof}\hfill

We now verify Theorem~\ref{th:main}~(\ref{th:main-classification}) for
the Cayley hyperbolic plane.
This will conclude the proof of Theorem~\ref{th:main}.

\begin{proof}[Proof of Theorem~\textup{\ref{th:main}~(\ref{th:main-classification})} when $\F=\O$]
Suppose $H\subseteq \s{F}_4^{-20}$ is a closed connected subgroup that acts polarly on
$M=\O\s{H}^2$ inducing a foliation.
By combining Proposition~\ref{th:Borel:tan}, Lemmas~\ref{lemma:equivalence-B+galpha} and~\ref{lemma:B+galpha} and Propositions~\ref{prop:case-a-conclusion} and~\ref{prop:case-palpha-conclusion}, we deduce that (up to orbit equivalence) $\g{h}\subseteq\g{t}\oplus\g{a}\oplus\g{n}$
and the tangent space 
$\g{h}_{\g{a}\oplus\g{n}}$ is of the form $\g{s}_{\g{b},\g{v}}$ for a subspace $\g{b}\subseteq \g{a}$ and a subspace $\g{v}\subseteq \g{g}_\alpha$ of dimension at most one.
Therefore, $H$ falls under the hypotheses of Proposition~\ref{prop:orbit-equivalence} (with $\g{z}=\g{a}\ominus\g{b}$ and $\g{v}_\alpha=\g{v}$).
This guarantees that the orbits of $H$ are equal to the orbits of the connected subgroup 
$\s{S}_{\g{b},\g{v}}\subseteq G$ whose Lie algebra is $\g{s}_{\g{b},\g{v}}$.
\end{proof}


\begin{thebibliography}{99}
\bibitem{AlekseevskyDiScala} 
D. V. Alekseevsky, A. J. Di Scala: Minimal homogeneous submanifolds of symmetric spaces, Lie groups and symmetric spaces, 
\textit{Amer. Math. Soc. Transl. Ser. 2} \textbf{210} (2003), 11–25.

\bibitem{BerndtBruck01}
J.~Berndt, M.~Br\"uck:
Cohomogeneity one actions on hyperbolic spaces,
\textit{J. Reine Angew. Math.} {\bf 541} (2001), 209--235.

\bibitem{BerndtConsoleOlmos16}
J.~Berndt, S.~Console, C.~E.~Olmos: {\it Submanifolds and holonomy}, second edition, 
Monographs and Research Notes in Mathematics, CRC Press, Boca Raton, FL, 2016.

\bibitem{berndt-diaz-ramos}
J.~Berndt, J.~C.~D\'{\i}az-Ramos: Homogeneous polar foliations of complex hyperbolic spaces, 
\textit{Comm.\ Anal.\ Geom.} \textbf{20} (2012), 435--454.

\bibitem{BerndtDiazTamaru10}
J.~Berndt, J.~C.~D\'iaz-Ramos, H.~Tamaru: Hyperpolar homogeneous foliations on symmetric spaces of noncompact type, 
\textit{J.~Differential Geom.}\ \textbf{86} (2010), 191--235.

\bibitem{BerndtTamaru07}
J.~Berndt, H.~Tamaru:
Cohomogeneity one actions on noncompact symmetric spaces of rank one, 
\textit{Trans.\ Amer.\ Math.\ Soc.}\ \textbf{359} (2007), 3425--3438.

\bibitem{BerndtTamaru03}
J.~Berndt, H.~Tamaru: Homogeneous codimension one foliations on noncompact symmetric spaces,  
\textit{J.~Differential Geom.}\ \textbf{63} (2003), 1--40.

\bibitem{BerndtTricerriVanHecke} 
J. Berndt, F. Tricerri, L. Vanhecke: 
\textit{Generalized Heisenberg groups and Damek-Ricci harmonic spaces}, Springer-Verlag, Berlin, 1995. 

\bibitem{Biliotti06}
L.~Biliotti:
Coisotropic and polar actions on compact irreducible Hermitian symmetric spaces,
\textit{Trans. Amer. Math. Soc.} {\bf 358} (2006), no.~7, 3003--3022.

\bibitem{Bohm98}
C. B\"ohm: Inhomogeneous Einstein metrics on low-dimensional spheres and other low-dimensional spaces,
\textit{Invent. Math.} {\bf 134} (1998), no.~1, 145--176.

\bibitem{Borel50}
A. Borel: Le plan projectif des octaves et les sphères comme espaces homogènes, 
\textit{C.\ R.\ Acad.\ Sci.\ Paris} \textbf{230} (1950), 1378--1380.

\bibitem{Bryant97}
R.~L. Bryant: Metrics with exceptional holonomy,
\textit{Ann. of Math.} (2) {\bf 126} (1987), no.~3, 525--576.

\bibitem{cowling-dooley-koranyi-ricci}
M.~Cowling, A.~Dooley, A.~Kor\'{a}nyi, F.~Ricci: An approach to symmetric spaces of rank one via groups of Heisenberg type, 
\textit{J.\ Geom.\ Anal.} \textbf{8} (1998), 199--237.

\bibitem{cowling-h-type}
M.~Cowling, A.~H Dooley, A.~Kor\'{a}nyi, F.~Ricci: $H$-type groups and Iwasawa decompositions, 
\textit{Adv.\ Math.} \textbf{87} (1991), 1--41.

\bibitem{Dadok85}
J.~Dadok: Polar coordinates induced by actions of compact Lie groups, \textit{Trans.\ Amer.\ Math.\ Soc.} \textbf{288} (1985), 125--137.

\bibitem{DiazDominguezKollross20}
J.~C.~D\'{\i}az Ramos, M.~Dom\'{\i}nguez-V\'{a}zquez, A.~Kollross: Polar actions on complex hyperbolic spaces, 
\textit{Math.\ Z.} \textbf{287} (2017), 1183--1213.

\bibitem{DiazDominguezRodriguez21}
J.~C.~D\'{\i}az-Ramos, M.~Dom\'{\i}nguez-V\'{a}zquez, A.~Rodr\'{\i}guez-V\'{a}zquez: Homogeneous and inhomogeneous isoparametric hypersurfaces in rank one symmetric spaces, 
\textit{J.\ Reine Angew.\ Math.} \textbf{779} (2021), 189--222.

\bibitem{DiazLorenzo} 
J.~C.~D\'{\i}az-Ramos, J.~M.~Lorenzo-Naveiro: Codimension two polar homogeneous foliations on symmetric spaces of noncompact type, 
\textit{Adv.\ Math.} \textbf{472} (2025), Paper No.~110295, 31 pp.

\bibitem{Eberlein96}
P.~B. Eberlein: \emph{Geometry of nonpositively curved manifolds}, Chicago Lectures in Mathematics, University of Chicago Press, Chicago, IL, 1996.

\bibitem{Gorodski04}
C.~Gorodski: Polar actions on compact symmetric spaces which admit a totally geodesic principal orbit,
\textit{Geom.\ Dedicata} \textbf{103} (2004), 193--204.

\bibitem{GorodskiKollross16}
C.~Gorodski, A.~Kollross: Some remarks on polar actions,
\textit{Ann. Global Anal. Geom.} {\bf 49} (2016), no.~1, 43--58.

\bibitem{Gray} 
A. Gray: A note on manifolds whose holonomy group is a subgroup of $\mathsf{Sp}(n)\cdot\mathsf{Sp}(1)$, \textit{Michigan Math. J.} \textbf{16} (1969), 125--128.

\bibitem{GroveZiller}
K.~Grove, W.~Ziller:
Polar manifolds and actions,
\textit{J.\ Fixed Point Theory Appl.} \textbf{11} (2012), no. 2, 279--313.

\bibitem{Helgason01}
S.~Helgason: \emph{Differential geometry, Lie groups, and symmetric spaces}, 
Graduate Studies in Mathematics, 34, American Mathematical Society, Providence, RI, 2001.

\bibitem{knapp}
A.~W. Knapp: \emph{Lie groups beyond an introduction}, second edition, 
Progress in Mathematics, 140, Birkh\"auser Boston, Boston, MA, 2002.

\bibitem{Kollross02}
A. Kollross: A classification of hyperpolar and cohomogeneity one actions,
\textit{Trans. Amer. Math. Soc.} {\bf 354} (2002), no.~2, 571--612.

\bibitem{kollross-duality}
A.~Kollross: Duality of symmetric spaces and polar actions, 
\textit{J.\ Lie Theory} \textbf{21} (2011), 961--986.

\bibitem{kollross-octonions}
A.~Kollross: Octonions, triality, the exceptional Lie algebra F4 and polar actions on the Cayley hyperbolic plane, 
\textit{Internat.\ J.\ Math.} \textbf{31} (2020), 2050051, 28 pp.

\bibitem{KollrossLytchak13}
A. Kollross, A. Lytchak: Polar actions on symmetric spaces of higher rank,
\textit{Bull.\ Lond.\ Math.\ Soc.}\ \textbf{45} (2013), 341--350.

\bibitem{Mostow}
{G.~D.~Mostow}: On maximal subgroups of real Lie groups, 
\textit{Ann.\ of Math.\ (2)} \textbf{74} (1961), 503--517.

\bibitem{PalaisTerng87}
R.~S.~Palais, C.-L. Terng: A general theory of canonical forms,
{\it Trans. Amer. Math. Soc.} {\bf 300} (1987), no.~2, 771--789.

\bibitem{PodestaThorbergsson99}
F.~Podest\`{a}, G.~Thorbergsson: Polar actions on rank-one symmetric spaces, 
\textit{J.\ Differential Geom.} \textbf{53} (1999), 131--175.

\bibitem{SanmartinSolonenko25}
V.~Sanmart\'{i}n-L\'{o}pez, I.~Solonenko:
Classification of cohomogeneity-one actions on symmetric spaces of noncompact type,
\verb|arXiv:2501.05553 [math.DG]|.

\bibitem{Solonenko21}
I.~Solonenko:
Homogeneous codimension-one foliations on reducible symmetric spaces of noncompact type,
\verb|arXiv:2112.02189 [math.DG]|.

\bibitem{wolf}
J. A. Wolf: Elliptic spaces in Grassmann manifolds, 
\textit{Illinois J.\ Math.}\ \textbf{7} (1963),
447--462.

\bibitem{Wu}
B. Wu: Isoparametric submanifolds of hyperbolic spaces, 
\textit{Trans.\ Amer.\ Math.\ Soc.}\ \textbf{331} (1992), no.~2, 609--626.
\end{thebibliography}
\end{document}